\newcommand{\br}{\mathbb R}
\newcommand{\bz}{\mathbb Z}
\newcommand{\bn}{\mathbb N}
\newcommand{\bq}{\mathbb Q}
\newcommand{\cE}{\mathcal E}
\newcommand{\cc}{\mathcal C}
\newcommand{\sU}{\mathscr U}
\newcommand{\sV}{\mathscr V}
\newcommand{\vp}{\varphi}
\newcommand{\Ht}{\Gamma}
\newcommand{\ssm}{\smallsetminus}
\DeclareMathOperator{\mcg}{MCG}
\DeclareMathOperator{\Ends}{\cE}
\DeclareMathOperator{\Homeo}{Homeo}
\DeclareMathOperator{\supp}{supp}
\renewcommand{\co}{\colon\thinspace}
\newtheorem{Thm}{Theorem}[section]
\newtheorem{Thm*}{Theorem}
\newtheorem{Prop}[Thm]{Proposition}
\newtheorem{Lem}[Thm]{Lemma}
\newtheorem{Cor}[Thm]{Corollary}
\newtheorem{Cor*}[Thm*]{Corollary}
\newtheorem*{MainThm2}{Theorem~\ref{thm:characterize}}
\theoremstyle{definition}
\newtheorem{Def}[Thm]{Definition}
\numberwithin{equation}{section}
\title{Homeomorphism groups of 2-manifolds \\with the virtual Rokhlin property}
\author{Justin Lanier}
\address{School of Mathematics and Statistics F07 \\ The University of Sydney \\ NSW 2006 Australia} 
\email{justin.lanier@sydney.edu.au}
\author{Nicholas G.~Vlamis}
\address{Department of Mathematics \\ CUNY Graduate Center \\ New York, NY 10016, and \newline Department of Mathematics \\ CUNY Queens College \\ Flushing, NY 11367}
\email{nicholas.vlamis@qc.cuny.edu}
\begin{document}  

\begin{abstract}
We introduce and motivate the definition of the virtual Rokhlin property for topological groups.
We then classify the 2-manifolds whose homeomorphism groups have the virtual Rokhlin property.
We also establish the analogous result for mapping class groups of 2-manifolds. 
\end{abstract}

\maketitle

\vspace{-0.5in}


\section{Introduction}

A topological group has the \emph{Rokhlin property} if there exists an element of the group whose conjugacy class is dense.
In \cite{LanierMapping}, we gave a complete characterization of the orientable 2-manifolds whose mapping class groups have the Rokhlin property.
This result was independently and simultaneously obtained in \cite{HHMRSV}. 
In our previous work, the mapping class group of a subclass of 2-manifolds failed to have the Rokhlin property due to the existence of an open index two subgroup---a more trivial reason than most other cases.
The subclass consists of 2-manifolds with exactly two maximal ends such that these ends can be permuted; examples include the open annulus and the ladder surface.
The origin of this article is our interest in characterizing when these subgroups have the Rokhlin property themselves. 
One natural reason to address this problem is the fact that the algebraic consequence of the Rokhlin property observed in  our first article continues to hold when a finite-index open normal subgroup has the Rokhlin property (this is discussed further below). 
With this in mind, we introduce the following definition:

\begin{Def}[Virtual Rokhlin property]
A topological group has the \emph{virtual Rokhlin property} if it contains an open finite-index subgroup with the Rokhlin property (where the subgroup is considered with the subspace topology). 
\end{Def}

Our main theorem is a characterization of the 2-manifolds whose homeomorphism groups have the viritual Rokhlin property in terms of properties of the topology of the 2-manifold. 
Here, the homeomorphism group of a 2-manifold \( M \), denoted \( \Homeo(M) \), is the group consisting of homeomorphisms \( M \to M \), equipped with the compact-open topology and viewed as a topological group.
For the characterization, we introduce the notion of a \emph{spacious} 2-manifold, which---for the reader familiar with the work of Mann--Rafi \cite{MannRafi}---is meant to capture some of the structure of telescoping 2-manifolds with exactly two maximal ends.

The remainder of the introduction is divided into several subsections.
We begin by introducing the requisite definitions and stating the main theorem and then proceed to several subsections dedicated to corollaries and results of independent interest.
In particular, we will classify the finite-type 2-manifolds \( M \) for which \( \Homeo_0(M) \) has the Rokhlin property, describe how the virtual Rokhlin property obstructs the existence of certain countable quotients, and explain how the virtual Rokhlin property limits the possibilities for ``interesting'' group actions.

\subsection*{Definitions and main result}

A subset \( K \) of a topological space \( X \) is \emph{displaceable} if there exists a homeomorphism \( f \co X \to X \) such that \( f(K) \cap K = \varnothing \). 
A 2-manifold \( M \) is \emph{weakly self-similar} if every proper compact subset of \( M \) is displaceable; \( M \) is \emph{self-similar} if it is weakly self-similar and every separating simple closed curve in \( M \) has a complementary component that is homeomorphic to \( M \) with a point removed.

By the Schoenflies theorem, the 2-sphere and the plane are self-similar; in fact, they are the only finite-type self-similar 2-manifolds. 
The definition of self-similar given above is introduced in \cite{VlamisHomeomorphism} as an end-free alternative to the original definition given in \cite{MalesteinSelf}.  
We also note that the open annulus \( \mathbb S^1 \times \br \) is the only finite-type 2-manifold that is weakly self-similar but not self-similar. 
Therefore, given a 2-manifold \( M \) that is weakly self-similar but not self-similar, \( M \) is non-compact, which allows us to apply  \cite[Theorem~2.4~and~Lemma~5.4]{LanierMapping} to see that \( M \) is homeomorphic to \( N \#N \) for some  self-similar 2-manifold \( N \).  

Let \( \Sigma \) be a subsurface of a 2-manifold \( M \) such that \( \Sigma \) is closed as a subset of \( M \) and has compact boundary.
Then, the inclusion of \( \Sigma \) into \( M \) induces an open embedding of the end space of \( \Sigma \) into the end space of \( M \). 
Given an end \( e \) in \( M \), we say \( \Sigma \) is a \emph{neighborhood of \( e \) in \( M \)} if \( e \) is in the end space of \( \Sigma \), viewed as a subset of the end space of \( M \).

\begin{Def}[Maximally stable end]
Let \( M \) be a weakly self-similar 2-manifold, and let \( N \) be a self-similar 2-manifold such that \( M \) is homeomorphic to \( N \) or \( N \#N \).
An end \( \mu \) of \( M \) is \emph{maximally stable} if there exists a nested sequence of subsurfaces \( \{\Sigma_n\}_{n\in\bn} \) such that \( \Sigma_n \) is a neighborhood of \( \mu \), \( \Sigma_n \) is homeomorphic to \( N \) with an open disk removed, and \( \bigcap_{n\in\bn} \Sigma_n  = \varnothing \). 
\end{Def}

Note that every maximally stable end is both \emph{maximal} and \emph{stable} in the sense of Mann--Rafi \cite{MannRafi}, and in fact, every maximal end of a weakly self-similar 2-manifold is maximally stable.
The Mann--Rafi notion of maximal end is defined in terms of a preorder on the end space of the manifold, which we do not require for our main theorem, but we will use---and introduce---the preorder in Section~\ref{sec:detection}.

The set of maximally stable ends of a weakly self-similar 2-manifold is either a singleton, doubleton, or is perfect (see Proposition~\ref{prop:ss-partition} below). 
Moreover, a self-similar 2-manifold either has a unique maximally stable end or a perfect set of maximally stable ends; in the former case, the manifold is said to be \emph{uniquely self-similar}, and in the latter, is said to be \emph{perfectly self-similar}.

The \emph{support} of a homeomorphism \( f \co M \to M \), denoted \( \supp(f) \), is the closure of the set \( \{x\in M: f(x) \neq x\} \). 
Given a weakly self-similar 2-manifold \( M \), let \( \Ht(M) \) denote the subgroup of \( \Homeo(M) \) consisting of homeomorphisms supported outside a neighborhood of the set of maximally stable ends of \( M \). 
Observe that, as \( \mathbb S^2 \) has no ends, \( \Ht(\mathbb S^2) = \Homeo(\mathbb S^2) \).

\begin{Def}[Spacious 2-manifold]
A 2-manifold \( M \) is \emph{spacious} if it is weakly self-similar and, given any simple closed curve \( c \) in \( M \), the action of \( \Ht(M) \) on  \( \Homeo(M) \cdot c \) has finitely many orbit classes.
\end{Def}

Throughout the article, we denote the orbit classes of the action of \( \Gamma(M) \) on \( \Homeo(M) \cdot c \) by \( \Gamma(M) \backslash \Homeo(M) \cdot c \). 

Let us consider some examples.
The 2-sphere is spacious and is the only compact spacious manifold. 
As the 2-sphere is a rather special case, the two key motivating examples of spacious 2-manifolds are the plane \( \br^2 \) and the open annulus \( \mathbb S^1 \times \br \). 
An uncountable family of examples is obtained by deleting a set from the 2-sphere homeomorphic to the ordinal space \( \omega^\alpha\cdot n + 1 \), where \( \omega \) is the first countable ordinal, \( n \in \{1,2\} \), and \( \alpha \) is any countable ordinal if \( n =1 \) and any countable limit ordinal if \( n = 2 \). 
Note that in the above example, if \( n =2 \) and \( \alpha \) were a successor ordinal, then the resulting manifold would be weakly self-similar, but not spacious. 
Similarly, the the ladder surface---the orientable two-ended infinite-genus 2-manifold with no planar ends---fails to be spacious: any two non-homotopic disjoint simple closed curves separating the two ends are in different \( \Ht \)-orbits (this was a key observation in \cite{PatelVlamis}, which consequently led to the results in \cite{APV}). 

We can now state the main theorem:
\vspace{-6pt}
\begin{Thm}
\label{thm:main1}
The homeomorphism group of a 2-manifold \( M \) has the virtual Rokhlin property if and only if \( M \) is spacious.
Moreover, if \( M \) is spacious, then the closure of \( \Ht(M) \) in \( \Homeo(M) \) is an open finite-index subgroup with the Rokhlin property.
\end{Thm}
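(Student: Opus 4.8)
The plan is to prove the two directions separately, treating the ``moreover'' clause as the substance of the backward implication. Throughout, write $\mathcal M$ for the set of maximally stable ends of $M$ and let $V \le \Homeo(M)$ be the subgroup of homeomorphisms fixing each end of $\mathcal M$. First I would establish, for spacious $M$, the identification $\overline{\Ht(M)} = V$. The inclusion $\overline{\Ht(M)} \subseteq V$ is immediate: every element of $\Ht(M)$ is the identity near $\mathcal M$ and hence fixes each maximally stable end, and fixing an end is a closed condition because the action of $\Homeo(M)$ on the space of ends is continuous and the space of ends is Hausdorff. For the reverse inclusion I would take $f \in V$ together with a compact set $K$ and produce $g \in \Ht(M)$ agreeing with $f$ on $K$: since $f$ fixes each end of $\mathcal M$, a fragmentation argument—pushing the discrepancy between $f$ and the identity into the self-similar neighborhoods of the maximally stable ends (each homeomorphic to $N$ with an open disk removed) and using weak self-similarity to displace it away from $K$—yields such a $g$.

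Next I would bound the index. By the trichotomy of Proposition~\ref{prop:ss-partition}, $\mathcal M$ is a singleton, a doubleton, or perfect; spaciousness rules out the perfect case, since a perfect $\mathcal M$ produces infinitely many $\Ht(M)$-orbit classes of separating curves. Thus $|\mathcal M| \le 2$, and the continuous action of $\Homeo(M)$ on the finite set $\mathcal M$ gives a homomorphism to $\mathrm{Sym}(\mathcal M)$ with kernel $V$ and image of order at most $2$, so $[\Homeo(M) : V] \le 2$. Being a closed subgroup of finite index in a topological group, $V = \overline{\Ht(M)}$ is therefore open.

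The heart of the backward direction is the Rokhlin property for $V$. I would establish it via topological transitivity of the conjugation action of $V$ on itself: since $\Homeo(M)$ is Polish and $V$ is closed, $V$ is Polish, so it suffices to show that for any two basic open sets—say the homeomorphisms agreeing with $f_1$ on a compact set $K_1$ and those agreeing with $f_2$ on a compact set $K_2$—there is a conjugator $v \in V$ carrying one into the other. Here the finite-orbit-class hypothesis is the engine: it yields a change-of-coordinates principle inside $V$, guaranteeing that the finitely many combinatorial types of curves and subsurfaces occurring in $K_1$ and $K_2$ can be matched by an element of $V$, while the self-similar neighborhoods of the maximally stable ends absorb the unmatched behavior. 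Crucially, because $V$ fixes $\mathcal M$ pointwise, the construction of $v$ never interchanges the two maximal ends in the doubleton case—precisely the reason $\Homeo(M)$ itself fails the Rokhlin property there while $V$ succeeds.

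For the forward direction I would argue by contraposition. If $M$ is not weakly self-similar, a non-displaceable proper compact set furnishes a conjugation-invariant distinction that persists in every open finite-index subgroup, so no such subgroup admits a dense conjugacy class. If $M$ is weakly self-similar but not spacious, some $\Homeo(M)$-orbit of curves splits into infinitely many $\Ht(M)$-orbit classes, and I would show this infinitude prevents any single element of an open finite-index subgroup from having conjugates dense enough to realize all of them. I expect the main obstacle to be the Rokhlin property for $V$—wiring the finite-orbit-class hypothesis into a clean topological-transitivity argument and absorbing the unmatched dynamics into the self-similar ends while keeping the conjugator inside $V$; by comparison, the fragmentation step identifying $\overline{\Ht(M)}$ with $V$ is delicate but routine.
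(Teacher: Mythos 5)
Your candidate subgroup is wrong, and this sinks the backward direction at its first step. You take \( V \) to be the full stabilizer of the set \( \mathcal M \) of maximally stable ends and claim \( \overline{\Ht(M)} = V \), but this fails already for the plane and the open annulus: every element of \( \Ht(M) \) is the identity on a nonempty open set, hence orientation-preserving, so \( \overline{\Ht(M)} \subseteq \Homeo^+(M) \); yet \( V \) contains orientation-reversing homeomorphisms fixing each end (a reflection of \( \br^2 \), or \( (\theta,t)\mapsto(-\theta,t) \) on \( \bS^1\times\br \)). Your fragmentation argument for \( V \subseteq \overline{\Ht(M)} \) breaks at exactly this point: a tube-supported \( g \) agreeing with an orientation-reversing \( f \) on a compact set with nonempty interior would have to reverse orientation globally, a contradiction. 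Worse, \( V \) cannot have the Rokhlin property in these cases, since \( \Homeo^+(M)\cap V \) is a proper open normal subgroup of \( V \), and a group with a dense conjugacy class has no proper open normal subgroup (as the paper notes in proving Proposition~\ref{prop:trivial}). So your ``heart of the backward direction''---topological transitivity for \( V \)---is attempting to prove something false. The paper's Lemma~\ref{lem:dense} instead identifies \( \overline{\Ht(M)} \) with the subgroup \( N \) of end-fixing, orientation-preserving (when \( M \) is orientable) homeomorphisms lying in the kernel of the action on the finite set \( \Ht(M)\backslash(\Homeo(M)\cdot c) \) of orbit classes of a curve \( c \) separating the maximal ends; this last condition is a second way your \( V \) is too big, since an element permuting \( \Ht \)-orbit classes nontrivially cannot lie in \( \overline{\Ht(M)} \). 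Consequently your index bound \( [\Homeo(M):V]\le 2 \) attaches to the wrong group: for the annulus, \( \overline{\Ht} = \Homeo_0 \) has index \( 4 \), and in general the index is only finite (via Lemma~\ref{lem:action}), not at most \( 2 \). Once the correct \( N \) is in hand, the JEP argument does go roughly as you sketch (Lemma~\ref{lem:jep}): one conjugates by a tube-translation to disjoin supports, which is where spaciousness enters through Lemma~\ref{lem:tube-disjoint}.

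The forward direction is also substantially underdeveloped. For manifolds with a non-displaceable compact set but \emph{finite} mapping class group (thrice-punctured sphere, projective plane, Klein bottle, open M\"obius band, twice-punctured projective plane), ``every open finite-index subgroup'' includes \( \Homeo_0(M) \), and showing \( \Homeo_0(M) \) fails the Rokhlin property is genuinely nontrivial---the paper needs a Brouwer-fixed-point-plus-homology argument in the non-orientable case and a pseudo-Anosov/Alexander-system argument in the orientable case (Lemmas~\ref{lem:homeo0_no} and~\ref{lem:homeo0_o}), together with Lemma~\ref{lem:no-normal}. And in the weakly self-similar non-spacious case, your one-sentence claim that infinitely many \( \Ht \)-orbit classes ``prevent conjugates from being dense enough'' is precisely the hard content of Theorem~\ref{thm:2}: the paper must produce, from the JEP of an unknown finite-index subgroup \( G \), an element of \( \Ht \) matching a given \( G \)-orbit pair, and this requires the pseudo-Anosov/subsurface-projection argument to control the position of \( \sigma(c) \) before Lemma~\ref{lem:G-telescoping} can be applied; the perfectly self-similar case additionally needs that \( \Homeo^+(M) \) has no proper finite-index subgroups at all (Theorem~\ref{thm:finite-index}). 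None of these mechanisms appear in your sketch.
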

\vspace{-6pt}
The discussion thus far has been about homeomorphism groups, so let us state a version of Theorem~\ref{thm:main1} that pertains to mapping class groups.
To simplify statements, we introduce the following notation:


Let \( \Homeo_0(M) \) denote the connected component of the identity in \( \Homeo(M) \) with respect to the compact-open topology on \( \Homeo(M) \). 
If \( M \) is orientable, then the \emph{mapping class group} of \( M \), denoted \( \mcg(M) \), is defined to be \( \Homeo^+(M)/\Homeo_0(M) \); if \( M \) is not orientable, \( \mcg(M) \) is defined to be \( \Homeo(M)/ \Homeo_0(M) \).

\begin{Thm}
\label{thm:main2}
Let \( M \) be a 2-manifold with infinite mapping class group.
Then, the following are equivalent:
\begin{enumerate}
\item \( \Homeo(M) \) has the virtual Rokhlin property.
\item \( \mcg(M) \) has the virtual Rokhlin property.
\item \( M \) is spacious. 
\end{enumerate}
\end{Thm}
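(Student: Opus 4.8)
The equivalence of (1) and (3) is exactly Theorem~\ref{thm:main1}, so the whole task is to weave condition (2) into these. The plan is to work through the quotient homomorphism $q \co \Homeo(M) \to \Homeo(M)/\Homeo_0(M)$, whose kernel $\Homeo_0(M)$ is by definition the identity component. The structural fact I will lean on throughout is that a connected subgroup is contained in every open subgroup; thus $\Homeo_0(M) \le H$ for every open $H \le \Homeo(M)$, and $q$ induces an index-preserving bijection between the open finite-index subgroups of $\Homeo(M)$ and those of $\Homeo(M)/\Homeo_0(M)$. When $M$ is orientable I will also use that $\mcg(M) = \Homeo^+(M)/\Homeo_0(M)$ is an open subgroup of index at most two in $\Homeo(M)/\Homeo_0(M)$, with $\Homeo^+(M) = q^{-1}(\mcg(M))$ open of index at most two in $\Homeo(M)$; when $M$ is non-orientable, $\mcg(M) = \Homeo(M)/\Homeo_0(M)$ on the nose.

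For the implication $(1)\Rightarrow(2)$ I will first record the routine permanence of the Rokhlin property under continuous surjections: the image of a dense conjugacy class under a continuous surjective homomorphism is dense and lies in a single conjugacy class downstairs. Consequently the virtual Rokhlin property descends along $q$, since the image of a witnessing open finite-index Rokhlin subgroup is again open, finite-index, and Rokhlin. In the non-orientable case this already gives $(1)\Rightarrow(2)$. In the orientable case I will appeal to the explicit witness furnished by Theorem~\ref{thm:main1}: the closure of $\Ht(M)$ meets $\Homeo^+(M)$ in an open finite-index subgroup on which the density argument of Theorem~\ref{thm:main1} relativizes, so its image witnesses the virtual Rokhlin property for $\mcg(M)$. (Equivalently, one proves once and for all the permanence statement that the virtual Rokhlin property is inherited by open finite-index subgroups; this is the one mildly delicate permanence property, since a dense conjugacy class of the ambient group need only break into finitely many conjugacy classes of the subgroup.)

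The substance of the theorem is the contrapositive of $(2)\Rightarrow(3)$: if $M$ is not spacious, then $\mcg(M)$ fails the virtual Rokhlin property. The key principle is that the obstruction is \emph{mapping-class-group level}. Indeed, any continuous homomorphism from $\Homeo(M)$ to a countable discrete group annihilates the connected subgroup $\Homeo_0(M)$ and hence factors through $q$; moreover the virtual Rokhlin property forces every countable discrete quotient to be finite, since a witnessing open finite-index Rokhlin subgroup maps onto a countable discrete group with a dense---hence full---conjugacy class, which must then be trivial, and finite index bounds the quotient. Likewise, the action on isotopy classes of simple closed curves factors through $q$. I will therefore show that each way of failing to be spacious produces such an obstruction already inside $\mcg(M)$: when $M$ is not weakly self-similar, a non-displaceable proper compact set yields a preserved finite feature and an infinite countable quotient; when $M$ is weakly self-similar but some curve $c$ has infinitely many $\Ht(M)$-orbit classes in $\Homeo(M)\cdot c$, the mechanism behind the ladder surface (cf.\ the discussion of \cite{PatelVlamis,APV}) produces an infinite countable quotient of $\mcg(M)$, whence no virtual Rokhlin property. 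The hypothesis that $\mcg(M)$ is infinite is used precisely here to discard the finite-type surfaces: those with finite mapping class group (which trivially have the virtual Rokhlin property but need not be spacious) are excluded, while those with infinite \emph{discrete} mapping class group cannot have the virtual Rokhlin property, since a finite-index---hence infinite---subgroup of an infinite discrete group is never a single conjugacy class.

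Organizing the argument as the contrapositive of $(2)\Rightarrow(3)$ is deliberate: it avoids having to lift a dense conjugacy class from $\mcg(M)$ back through the connected kernel $\Homeo_0(M)$ to $\Homeo(M)$, which would be considerably more delicate. The main obstacle I anticipate is instead the second case above: converting the combinatorial failure ``infinitely many $\Ht(M)$-orbit classes of a curve'' into a genuine failure of the virtual Rokhlin property for $\mcg(M)$, that is, producing and verifying the requisite countable-quotient (or direct dynamical) obstruction uniformly across \emph{all} non-spacious $M$ rather than only for the model examples. By contrast, the descent $(1)\Rightarrow(2)$, the orientation bookkeeping through $\Homeo^+(M)$, and the exclusion of finite-type surfaces via the infinite-mapping-class-group hypothesis should be routine.
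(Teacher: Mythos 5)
Your scaffolding matches the paper's: the equivalence \((1)\Leftrightarrow(3)\) is Theorem~\ref{thm:main1}, the implication \((1)\Rightarrow(2)\) is the descent observation at the end of Section~\ref{sec:preliminaries} (and your orientation bookkeeping can be shortcut: by Proposition~\ref{prop:vr-normal}(i) the witnessing Rokhlin subgroup has no proper open normal subgroups, so it automatically lies in \( \Homeo^+(M) \)), and your parenthetical permanence claim is essentially Proposition~\ref{prop:vr-normal} again. The genuine gap is the heart of the theorem, \((2)\Rightarrow(3)\), where the mechanism you propose---that every failure of spaciousness produces an infinite countable discrete quotient of \( \mcg(M) \)---is provably unavailable in at least one family of cases. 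Take \( M \) perfectly self-similar (e.g., the sphere minus a Cantor set): it is weakly self-similar, not spacious (Lemma~\ref{lem:self-similar_spacious}), and has infinite mapping class group, yet \( \mcg(M) \) is coarsely bounded by Mann--Rafi (as noted in the introduction), so every continuous homomorphism to a countable discrete group has finite image, and indeed by Theorem~\ref{thm:finite-index} there are no proper finite-index subgroups at all. The paper disposes of this case by reducing virtual Rokhlin to Rokhlin via Theorem~\ref{thm:finite-index} and invoking the known failure of the Rokhlin property (Lemma~\ref{lem:ss-case}). Your mechanism is also blocked in the two-ended case: the counting homomorphisms behind the ladder surface are exactly conditions (i)--(iii) of Theorem~\ref{thm:characterize}, whose equivalence with non-spaciousness requires upward stability (Corollary~\ref{cor:cohomology}); the manifold \( X \) of Section~\ref{sec:example} is two-ended, weakly self-similar, and non-spacious (Proposition~\ref{prop:monster}) while satisfying none of those conditions---its non-spaciousness is detected by a cardinality count of end types that yields no homomorphism. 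Finally, your assertion that a non-displaceable compact set produces an infinite countable quotient is unsubstantiated; Lemma~\ref{lem:displaceable} is a direct violation of the JEP (the argument of \cite{LanierMapping} with elements replaced by powers landing in the given finite-index subgroup), not a quotient argument.

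What the paper actually does at the crux is prove \((2)\Rightarrow(3)\) \emph{directly} rather than by contrapositive, with no quotient in sight: in the forward direction of Theorem~\ref{thm:2}, given an open finite-index Rokhlin subgroup \( G \leq \mcg(M) \) and curves \( a, b \) in the same orbit of the preimage \( \widetilde G \), one applies the JEP to \( U = U_{\alpha,\beta} \) and to a translate, by a pseudo-Anosov \( f \) supported on a compact subsurface \( \Sigma \) with curve-graph displacement greater than \(2\), of the stabilizer of \( \mathcal C(\Sigma) \); a subsurface-projection estimate then forces \( a \) to be disjoint from \( \tilde\sigma(\Sigma) \), and Lemma~\ref{lem:G-telescoping} places \( a \) and \( b \) in the same \( \Ht(M) \)-orbit, so finiteness of \( \widetilde G \backslash (\Homeo(M)\cdot a) \) yields spaciousness. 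You flagged exactly this step as your anticipated obstacle, and it is where the proposal stops being a proof: without this pseudo-Anosov/subsurface-projection argument (or a genuine replacement), the implication \((2)\Rightarrow(3)\) is unestablished, and the countable-quotient route cannot supply it uniformly.
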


Though topological, the definition of spaciousness is a bit opaque, and we would prefer to have a characterization that relies on the data provided by the classification of 2-manifolds, namely the genus and the topology of the end space.  
As we will see in Theorem~\ref{thm:import} (as a consequence of our previous work), we can characterize the self-similar 2-manifolds that are spacious, namely they are the uniquely self-similar 2-manifolds.  
As every weakly self-similar 2-manifold that fails to be self-similar has two maximally stable ends (see Proposition~\ref{prop:ss-partition}), it is left to characterize the weakly self-similar 2-manifolds with exactly two maximally stable ends that are spacious.
This  is accomplished in the next theorem under a tameness condition on the end space, which we call \emph{upward stability} (see Definition~\ref{def:upward_stable} below).
We will see in Proposition~\ref{prop:monster} that the characterization, as stated, does in fact require some type of tameness condition. 

In the theorem statement, immediate predecessor is meant in the sense of the preorder of Mann--Rafi; see Section~\ref{sec:detection} for details.

\begin{Thm}
\label{thm:characterize}
Let \( M \) be an upwardly stable weakly self-similar 2-manifold with exactly two maximally stable ends.
Then, \( M \) is spacious if and only if none of the following conditions are met:
\begin{enumerate} [(i)]
\item a maximally stable end of \( M \) has an immediate predecessor with countable \( \Homeo(M) \)-orbit.
\item \( M \) is orientable, infinite genus, and only the maximal ends are non-planar. 
\item \( M \) is not orientable, infinite genus, and only the maximal ends are not orientable. 
\end{enumerate}
\end{Thm}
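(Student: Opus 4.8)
The plan is to analyze, for each simple closed curve \( c \), the orbit classes \( \Ht(M)\backslash\Homeo(M)\cdot c \) by first reducing to a normal form. Since \( M \) has exactly two maximally stable ends it is weakly self-similar but not self-similar, so \( M\cong N\#N \) for a self-similar \( N \); because a perfect set of maximally stable ends of \( N \) would produce one for \( M \), the surface \( N \) has a unique maximally stable end and is uniquely self-similar, hence spacious by Theorem~\ref{thm:import}. Write \( \mu_1,\mu_2 \) for the two maximally stable ends of \( M \). I would invoke upward stability to fix a telescoping exhaustion interpolating between \( \mu_1 \) and \( \mu_2 \) and use a change-of-coordinates principle, so that the \( \Homeo(M) \)-orbit of \( c \) is determined by whether \( c \) separates and, if so, by the topological types of its two complementary pieces. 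The governing dichotomy is whether \( c \) separates \( \mu_1 \) from \( \mu_2 \).

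If \( c \) does not separate \( \mu_1 \) from \( \mu_2 \), then one complementary piece contains both maximal ends and the other contains neither. After displacing \( c \) by weak self-similarity, I would absorb the maximal-end-free piece into a neighborhood of \( \mu_1 \) homeomorphic to \( N \) with an open disk removed, using the spaciousness of \( N \) to control the finitely many resulting orbit classes. Thus such curves contribute only finitely many classes to \( \Ht(M)\backslash\Homeo(M)\cdot c \), independently of conditions (i)--(iii).

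The crux is the case where \( c \) separates \( \mu_1 \) from \( \mu_2 \) and none of (i)--(iii) holds. Here I would connect any two such curves lying in a common \( \Homeo(M) \)-orbit by an element of \( \Ht(M) \) through an infinite swindle that displaces all intervening complexity toward non-maximal ends. The failure of (i) forces every immediate predecessor of each \( \mu_i \) to have an uncountable, hence perfect, \( \Homeo(M) \)-orbit, so \( \Ht(M) \) can freely permute the ends accumulating onto \( \mu_i \); the failure of (ii) means that either \( M \) has finite genus or there is a non-maximal non-planar end into which surplus genus can be pushed, and the failure of (iii) supplies the analogous non-maximal non-orientable end for absorbing cross-caps. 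Together these remove every candidate monotone invariant, and the quotient \( \Ht(M)\backslash\Homeo(M)\cdot c \) is finite.

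For the converse I would show that each condition yields a single \( \Homeo(M) \)-orbit splitting into infinitely many \( \Ht(M) \)-orbits, by producing an invariant preserved by \( \Ht(M) \) but not by \( \Homeo(M) \). Under (i), curves cutting off one immediate-predecessor end near \( \mu_i \) are separated by their position within the countable, rigidly ordered family of such ends that \( \Ht(M) \) must preserve. Under (ii) and (iii), a curve separating \( \mu_1 \) from \( \mu_2 \) bounds a region of finite genus, respectively finite cross-cap number, which \( \Ht(M) \) cannot change because all genus, respectively non-orientability, accumulates only at the maximal ends; this enclosed count realizes infinitely many values. I expect the main obstacle to be the forward swindle: making the simultaneous absorption of end-types, genus, and cross-caps precise while keeping every supporting homeomorphism off fixed neighborhoods of \( \mu_1 \) and \( \mu_2 \), as required for membership in \( \Ht(M) \).
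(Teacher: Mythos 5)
Your proposal has a genuine gap at exactly the point you flag as the main obstacle: the forward direction. The assertion that failure of (i)--(iii) ``removes every candidate monotone invariant'' and that therefore \( \Ht(M)\backslash\Homeo(M)\cdot c \) is finite is not an argument---absence of an obvious invariant does not yield finiteness---and, crucially, your sketch never actually uses upward stability (it appears only in the vague phrase about a ``telescoping exhaustion''). The paper's own Section~\ref{sec:example} shows this cannot be repaired without it: the 2-manifold \( X \) of Proposition~\ref{prop:monster} is weakly self-similar with exactly two maximally stable ends, meets none of conditions (i)--(iii) (its immediate predecessors have uncountable orbit, and the genus/cross-cap conditions fail), and yet is \emph{not} spacious. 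Your swindle, as described, would apply verbatim to \( X \) and prove a false statement. The actual content of the paper's forward direction is a stable-set absorption argument in the end space: given disjoint curves \( a, b \) separating the maximal ends, upward stability is used to cover the end set \( \widehat T \) of the tube between them by finitely many stable clopen neighborhoods \( \sU_1,\ldots,\sU_k \) of non-maximal \emph{stable} ends \( \pi_i \) lying strictly above the ends of \( \widehat T \) in the Mann--Rafi preorder (here failure of (i) guarantees that a stable immediate predecessor has perfect orbit, so Lemma~\ref{lem:structure}(2) applies); the \( \pi_i \) are then recursively pruned to be pairwise incomparable, which forces \( \widehat{T_a} \cong \widehat{T_b} \) for suitably chosen nested tubes, and failure of (ii)/(iii) (via the choice of \( b \) so the tubes carry infinite genus, resp.\ infinitely many isolated ends, when \( M \) does) upgrades this end-space homeomorphism to a homeomorphism of tubes, after which the change of coordinates principle and Lemma~\ref{lem:G-telescoping} place \( a \) and \( b \) in one \( \Ht(M) \)-orbit. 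None of this machinery is visible in your sketch.

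Your converse also contains two concrete errors, though the underlying counting idea is sound. Under (ii), a curve separating \( \mu_1 \) from \( \mu_2 \) does \emph{not} bound a region of finite genus: each complementary component contains a non-planar maximal end and so has infinite genus; the correct invariant is relative---the (finite) genus of the tube co-bounded by two such curves---and similarly for cross-caps under (iii). Under (i), your proposed family of ``curves cutting off one immediate-predecessor end'' carries no invariant at all: such curves do not separate the maximally stable ends, and by Lemma~\ref{lem:G-telescoping} any two of them in a common \( \Homeo(M) \)-orbit already lie in a common \( \Ht(M) \)-orbit; the correct count is of orbit points of the predecessor inside tubes between nested \emph{separating} curves. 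The paper avoids constructing these invariants altogether and argues more cleanly through Lemma~\ref{lem:tube-disjoint}: under each of (i)--(iii) one exhibits a tube \( T \) (meeting the countable orbit, or containing a handle, or a cross-cap) such that no larger tube \( T' \) can contain infinitely many disjoint translates of \( T \), since \( \widehat{T'} \) meets the countable orbit in a finite set (resp.\ \( T' \) has finite genus or finite cross-cap number); hence no \( T \)-translation exists in \( \Ht(M) \) and \( M \) is not spacious. Your relative counts can be made to work for the converse, but the forward direction as proposed is unsalvageable without the stable-neighborhood argument.
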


In each of the cases above in which \( M \) fails to be spacious, there is something to count, yielding the following corollary.

\begin{Cor}
\label{cor:cohomology}
Let \( M \) be a upwardly stable weakly self-similar 2-manifold with exactly two maximally stable ends, and let \( G \) denote the stabilizer of the maximal ends in \( \Homeo(M) \).
Then, \( M \) is spacious if and only if every homomorphism from \( G \) to \( \bz \) is trivial, or equivalently \( H^1(G,\bz) \) is trivial.
\end{Cor}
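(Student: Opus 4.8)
The plan is to deduce the corollary from the characterization in Theorem~\ref{thm:characterize} together with the obstruction that the virtual Rokhlin property places on countable quotients. Throughout, write \( \mu_1,\mu_2 \) for the two maximally stable ends; since the set of maximally stable ends is topologically canonical, every homeomorphism of \( M \) permutes \( \{\mu_1,\mu_2\} \), and \( G \) is precisely the (index one or two) subgroup fixing each \( \mu_i \). Thus \( \Ht(M)\le G\le\Homeo(M) \), with \( \Ht(M) \) normal in \( G \) (conjugating a homeomorphism supported away from \( \{\mu_1,\mu_2\} \) by an element of \( G \) keeps the support away from \( \{\mu_1,\mu_2\} \)) and \( G \) closed (being a point-stabilizer for the continuous action of \( \Homeo(M) \) on the Hausdorff end space). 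I will prove the two implications separately; the identification \( H^1(G,\bz)\cong\operatorname{Hom}(G,\bz) \) is immediate since \( \bz \) is a trivial \( G \)-module.

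For the forward implication, suppose \( M \) is spacious. By Theorem~\ref{thm:main1}, the closure \( \overline{\Ht(M)} \) is an open, finite-index subgroup of \( \Homeo(M) \) with the Rokhlin property. Because \( \Ht(M)\le G \) and \( G \) is closed, we have \( \overline{\Ht(M)}\le G \), so \( \overline{\Ht(M)} \) is also open and of finite index in \( G \); hence \( G \) itself has the virtual Rokhlin property. Appealing to the obstruction established in the introduction—that a group with the virtual Rokhlin property admits no nontrivial homomorphism to \( \bz \)—we conclude \( \operatorname{Hom}(G,\bz)=0 \), and therefore \( H^1(G,\bz)=0 \).

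For the reverse implication I argue the contrapositive: if \( M \) is not spacious, then by Theorem~\ref{thm:characterize} at least one of the conditions (i)--(iii) holds, and in each case I will build a nonzero homomorphism \( \phi\co G\to\bz \) by counting the relevant feature—predecessor ends in case (i), genus in case (ii), cross-caps in case (iii). Fix a simple closed curve \( c \) separating \( \mu_1 \) from \( \mu_2 \), and for \( f\in G \) let \( A_i \) (resp.\ \( A_i' \)) denote the closed half-surface cut off by \( c \) (resp.\ \( f(c) \)) containing \( \mu_i \). Since the chosen feature accumulates only on the maximal ends—which lie outside the overlap regions \( A_1'\cap A_2 \) and \( A_2'\cap A_1 \)—each overlap carries only finitely much of it, and I set \( \phi(f) \) to be the feature-count of \( A_1'\cap A_2 \) minus that of \( A_2'\cap A_1 \). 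That \( \phi \) is a homomorphism follows from additivity of the count together with the identity \( \text{count}(f(c),fg(c))=\text{count}(c,g(c)) \), which holds because \( f\in G \) carries the region between \( c \) and \( g(c) \) onto the region between \( f(c) \) and \( fg(c) \) while preserving the topological feature. Finally, non-spaciousness provides infinitely many \( \Ht(M) \)-orbit classes of such separating curves, indexed compatibly by \( \bz \); the homogeneity coming from weak self-similarity near the maximal ends then yields a ``shift'' homeomorphism in \( G \) realizing a nontrivial translation of this indexed set, i.e.\ an element on which \( \phi \) takes the value \( \pm 1 \).

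The main obstacle lies in the forward implication: the obstruction result must rule out \emph{abstract} homomorphisms \( G\to\bz \), not merely continuous ones, so its proof must leverage automatic continuity (or uniform perfectness) of the Rokhlin subgroup \( \overline{\Ht(M)} \) rather than the bare Rokhlin property, which on its own kills only continuous homomorphisms. A secondary point is the well-definedness and finiteness of the counts in the reverse implication—one must verify that the overlap regions carry only finitely much of each feature (using that all ends other than \( \mu_1,\mu_2 \) lying in these regions are planar, orientable, or of the appropriate non-maximal type) and that the three cases of Theorem~\ref{thm:characterize} can be handled by this single counting template.
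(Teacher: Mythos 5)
Your proposal is correct and follows essentially the same route as the paper: the forward direction passes to \( G \) via \( \overline{\Ht(M)} \le G \) (open and of finite index, so \( G \) has the virtual Rokhlin property) and then kills homomorphisms to \( \bz \), while the reverse direction constructs the same counting homomorphisms---predecessors, handles, or crosscaps moved across a separating curve---from the trichotomy of Theorem~\ref{thm:characterize}, with a translation witnessing nontriviality (the paper outsources these counts to the literature). One clarification: the ``main obstacle'' you flag is already resolved by Proposition~\ref{prop:trivial}, since \( \bz \) is cm-slender and \( G \) is completely metrizable (being closed in \( \Homeo(M) \)), so \emph{abstract} homomorphisms \( G \to \bz \) automatically have open kernel---the automatic continuity lives in cm-slenderness of the target, not in any property of \( \overline{\Ht(M)} \) itself, and the Rokhlin property is used only to show the kernel contains the finite-index subgroup, after which torsion-freeness of \( \bz \) finishes (your claim that \( \phi \) attains \( \pm 1 \) is also stronger than needed; a nonzero value suffices).
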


\begin{proof}
Suppose \( M \) fails to be spacious. 
Let \( c \) be an oriented simple closed curve separating the maximally stable ends of \( M \). 
By Theorem~\ref{thm:characterize}, given \( f \in \Homeo(M) \) that stabilizes the maximal ends, we can count the net difference of (i) immediate predecessors, (ii) handles, or (iii) crosscaps \( f \) moves from the left of \( c \) to the right. 
This type of counting has appeared several times in the literature (e.g., \cite{APV, APV2, MannRafi, HernandezFirst}) and is known to yield a homomorphism from the stabilizer of the maximally stable ends in \( \Homeo(M) \) to \( \bz \).

For the converse, suppose that \( M \) is spacious.
By Corollary~\ref{thm:main1}, \( \Homeo(M) \) has the virtual Rokhlin property, and by Propositon~\ref{prop:vr-normal} below, \( G \) has the virtual Rokhlin property.  
Therefore, by Proposition~\ref{prop:trivial} below, every homomorphism \( G \to \bz \) is trivial. 
\end{proof}

We note that if \( M \) is as in Corolllary~\ref{cor:cohomology}, it is possible for \( H_1(G, \bz) \) (i.e, the abelianization of \( G \)) to be large even though \( H^1(G, \bz) \) is trivial. For example, Domat--Dickmann \cite{DomatBig}  showed if \( M \) is the Loch Ness monster surface, then \( H_1(\Homeo(M), \bz) \) contains uncountably many copies of \( \bq \).

\subsection*{A result about finite-type 2-manifolds}

The condition on the cardinality of the mapping class group in Theorem~\ref{thm:main2} is necessary as there exist 2-manifolds with finite mapping class groups and that fail to be spacious
 (namely, the thrice-punctured sphere, the projective plane, the Klein bottle, the open M\"obius band, and the twice-punctured projective plane), and every finite group has the virtual Rokhlin property.
In fact,  the homeomorphism groups of non-spacious 2-manifolds with finite mapping class groups fail to have the virtual Rokhlin property. 
This is a consequence of Theorem~\ref{thm:main3}, which is of independent interest and is established in the process of proving Theorem~\ref{thm:main1}.

\begin{Thm}
\label{thm:main3}
Let \( M \) be a finite-type 2-manifold.
Then \( \Homeo_0(M) \) has the virtual Rokhlin property if and only if \( M \) is homeomorphic to the 2-sphere, plane, or open annulus.
Moreover, if \( M \) is homeomorphic to the 2-sphere, plane, or open annulus, then \( \Homeo_0(M) \) has the Rokhlin property; in particular, \( \Homeo_0(M) \) has the Rokhlin property if and only if it has the virtual Rokhlin property.
\end{Thm}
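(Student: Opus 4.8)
The plan is to reduce the virtual Rokhlin property to the Rokhlin property by connectedness, obtain the three positive cases from Theorem~\ref{thm:main1}, and exclude everything else by splitting on the cardinality of the mapping class group. First, since \( \Homeo_0(M) \) is connected and an open subgroup of a topological group is automatically closed, the only open finite-index subgroup of \( \Homeo_0(M) \) is \( \Homeo_0(M) \) itself; hence for \( \Homeo_0(M) \) the two properties coincide. This yields the final ``in particular'' assertion and reduces the theorem to determining when \( \Homeo_0(M) \) has the Rokhlin property.

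For the positive direction I would show \( \overline{\Ht(M)} = \Homeo_0(M) \) when \( M \) is the plane or the open annulus. In both cases \( \Ht(M) \) is exactly the group of compactly supported homeomorphisms; each such map is orientation-preserving and, after unwinding any twisting toward an end, isotopic to the identity, so \( \Ht(M) \subseteq \Homeo_0(M) \), while conversely every element of \( \Homeo_0(M) \) is a compact-open limit of compactly supported ones. The ``moreover'' clause of Theorem~\ref{thm:main1} then gives that \( \Homeo_0(M) = \overline{\Ht(M)} \) has the Rokhlin property. The sphere is handled by the same self-similar construction carried out inside the orientation-preserving subgroup \( \Homeo_0(\mathbb S^2) = \Homeo^+(\mathbb S^2) \).

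For the negative direction let \( M \) be finite-type but none of the three; then \( M \) fails to be weakly self-similar and hence is not spacious. If \( \mcg(M) \) is finite, then \( \Homeo_0(M) \) is an open \emph{finite-index} subgroup of \( \Homeo(M) \), so were it to have the Rokhlin property it would witness the virtual Rokhlin property for \( \Homeo(M) \), contradicting Theorem~\ref{thm:main1}. This simultaneously disposes of the thrice-punctured sphere, the projective plane, the Klein bottle, the open M\"obius band, and the twice-punctured projective plane.

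The remaining case, \( \mcg(M) \) infinite, is where \( \Homeo_0(M) \) is no longer finite-index and Theorem~\ref{thm:main1} does not apply, and this is the main obstacle. Here I would preclude a dense conjugacy class by producing two disjoint nonempty conjugation-invariant open subsets of \( \Homeo_0(M) \)---equivalently, a nonconstant continuous conjugation-invariant function. For the torus such a function comes from the homological rotation set: it is conjugation-invariant and upper semicontinuous, the rotations \( R_\alpha \) realize distinct singleton values, and density would force the rotation set of a single element to equal every singleton at once. The difficulty is that this singleton argument collapses for every other surface with infinite mapping class group: on a closed surface of genus at least two a Lefschetz argument forces a fixed point and hence \( 0 \) in the rotation set, and on a punctured surface filling the punctures makes every element fix the resulting marked points, again forcing \( 0 \) into the rotation set; in both situations upper semicontinuity alone gives no contradiction. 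I expect to resolve this by replacing the homological rotation set with a genuinely continuous conjugation-invariant dynamical invariant adapted to the hyperbolic structure present once \( \chi(M)<0 \)---for instance a rotation number extracted from the canonically lifted action on \( \partial\mathbb H^2 \) at a parabolic or axial fixed point---and the technical heart of the proof is making such an invariant well-defined for arbitrary homeomorphisms and checking that it is nonconstant exactly when \( \mcg(M) \) is infinite while remaining vacuous for the plane and the annulus.
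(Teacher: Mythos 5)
Your opening reduction is correct and in fact cleaner than the paper's: the paper proves that \( \Homeo_0(M) \) has no proper finite-index normal subgroups (Lemma~\ref{lem:no-normal}) by citing simplicity of \( \Homeo_0 \) for closed \( M \) and McDuff's classification of the normal subgroup lattice otherwise, whereas your observation that a connected group has no proper open subgroup at all gives the equivalence of the Rokhlin and virtual Rokhlin properties for \( \Homeo_0(M) \) immediately. Your positive direction also matches the paper in substance: for the sphere and plane the paper cites \cite{GW} and \cite{VlamisHomeomorphism}, and for the annulus it uses Theorem~\ref{thm:2}, whose proof (Lemmas~\ref{lem:dense} and~\ref{lem:jep}) shows exactly your claim that \( \overline{\Ht(\mathbb S^1 \times \br)} = \Homeo_0(\mathbb S^1 \times \br) \) has the Rokhlin property, with no circularity. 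The negative direction, however, has two genuine gaps.

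First, your finite mapping class group case is circular relative to the paper's logical architecture. For the thrice-punctured sphere, projective plane, Klein bottle, open M\"obius band, and twice-punctured projective plane, the \emph{only} proof of Theorem~\ref{thm:main1} is Lemma~\ref{lem:cd-case}, which is itself deduced from Theorem~\ref{thm:main3}; the displaceability obstruction (Lemma~\ref{lem:displaceable}) explicitly requires infinite mapping class group, so there is no independent route to Theorem~\ref{thm:main1} for these manifolds that you could invoke. The paper instead handles them directly: the thrice-punctured sphere falls under Lemma~\ref{lem:homeo0_o} (using properly embedded arcs in place of essential curves), and the non-orientable sporadic cases under Lemma~\ref{lem:homeo0_no}. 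Second, in the infinite mapping class group case your torus argument via the rotation set is fine (upper semicontinuity plus conjugation invariance plus the distinct singletons \( \{\alpha\} \) does yield a contradiction with a dense conjugacy class), but the proposed invariant for all remaining surfaces is dead on arrival: an element of \( \Homeo_0(M) \) admits a canonical lift to \( \mathbb H^2 \) at bounded distance from the identity, which extends to the \emph{identity} on \( \partial\mathbb H^2 \), so any rotation number read off at a parabolic or axial boundary fixed point vanishes identically on \( \Homeo_0(M) \) and separates nothing. This is precisely where the paper's real work lies, and it does not use a continuous invariant at all: Lemma~\ref{lem:homeo0_o} violates the JEP directly using a high power of a point-pushing pseudo-Anosov on \( M \ssm \{p\} \), an Alexander system, and the Brouwer fixed point theorem, while Lemma~\ref{lem:homeo0_no} uses oriented disks together with the action on \( H_1 \) of a thrice-punctured copy of \( M \). (A continuous conjugation-invariant function of the kind you seek does exist for compact \( M \) of genus at least one, via asymptotic translation length on the fine curve graph \cite{BowdenRotation}, as the paper remarks, but its continuity is a substantial theorem and it does not cover the punctured cases, so it cannot replace the paper's lemmas.)
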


The fact that \( \Homeo_0(\mathbb S^2) \) has the Rokhlin property was proved by Glasner--Weiss \cite{GW}, a proof of the fact that \( \Homeo_0(\mathbb R^2) \) has the Rokhlin property can be found in \cite{VlamisHomeomorphism}, and the fact that \( \Homeo_0(\mathbb S^1 \times \br) \) has the Rokhlin property is a corollary of Theorem~\ref{thm:main1}. 
We give a direct proof that \( \Homeo_0(M) \) cannot have the Rokhlin property for any other finite-type 2-manifold \( M \), see Lemma~\ref{lem:homeo0_no} and Lemma~\ref{lem:homeo0_o}.
For a compact 2-manifold \( M \) of genus at least one, one can deduce that \( \Homeo_0(M) \) fails to have the Rokhlin property using the work of Bowden--Hensel--Mann--Militon--Webb \cite[Theorem~1.2]{BowdenRotation}. They show that the asymptotic translation length of the action of \( \Homeo_0(M) \) on the fine curve graph of \( M \) yields a continuous (conjugation-invariant) surjection onto the non-negative reals, and as explained in the next paragraph, such a function could not exist if \( \Homeo_0(M) \) had the Rokhlin property. 

\subsection*{Geometric consequence}

Suppose \( G \) is a topological group with the Rokhlin property and that \( f \co G \to \br \) is a continuous conjugation-invariant function. 
If the conjugacy class of \( g \in G \) is dense, then \( f(h) = f(g) \) for each \( h \in G \); in particular, \( f \) is constant.
We can use this observation to greatly restrict the geometry of topological groups with the virtual Rokhlin property, which we record in the following corollary for homeomorphism groups of spacious 2-manifolds.

\begin{Cor}
\label{cor:geometric}
Let \( M \) be a spacious 2-manifold.
Then: 
\begin{enumerate}
\item Every continuous bi-invariant metric on \( \Homeo(M) \) is bounded.
\item Every continuous  quasimorphism of \( \Homeo(M) \) is bounded. 
\item If \( \Homeo(M) \) acts continuously on a Gromov hyperbolic space by isometries, then the asymptotic translation length of every element is 0; in particular, no element of \( \Homeo(M) \) acts as a hyperbolic isometry on a Gromov hyperbolic metric space under a continuous action. \qed
\end{enumerate}
\end{Cor}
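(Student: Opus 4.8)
The plan is to run the guiding principle of the preceding paragraph through the virtual Rokhlin property. By Theorem~\ref{thm:main1}, since \( M \) is spacious, the subgroup \( H := \overline{\Ht(M)} \) is an open, finite-index subgroup of \( G := \Homeo(M) \) with the Rokhlin property; I fix \( g_0 \in H \) with dense \( H \)-conjugacy class and set \( N = [G:H] \). Two elementary observations are used repeatedly. First, any continuous, \( H \)-conjugation-invariant function \( f \co H \to \br \) is constant, equal to \( f(\mathrm{id}) \), since \( \mathrm{id} \) lies in the closure of the dense conjugacy class of \( g_0 \). Second, a pigeonhole argument on the cosets \( gH, g^2H, \ldots, g^{N+1}H \) shows that for every \( g \in G \) there is some \( 1 \le k \le N \) with \( g^k \in H \); together with coset representatives \( g_1, \ldots, g_N \) of \( H \) in \( G \), this lets me bootstrap control from \( H \) to \( G \).

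For (1), let \( d \) be a continuous bi-invariant metric on \( G \). The displacement \( \delta(g) = d(\mathrm{id}, g) \) is continuous, and bi-invariance gives \( \delta(hgh^{-1}) = d(hgh^{-1}, \mathrm{id}) = d(g, \mathrm{id}) = \delta(g) \), so \( \delta|_H \) is continuous and \( H \)-conjugation-invariant. By the first observation, \( \delta \equiv \delta(\mathrm{id}) = 0 \) on \( H \). For an arbitrary \( g = g_i h \) with \( h \in H \), left-invariance and the triangle inequality give \( \delta(g) \le \delta(g_i) + d(g_i, g_i h) = \delta(g_i) + \delta(h) = \delta(g_i) \le \max_j \delta(g_j) \), so \( d \) is bounded.

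For (2), let \( \phi \co G \to \br \) be a continuous quasimorphism with defect \( D \), and let \( \bar\phi(g) = \lim_n \phi(g^n)/n \) be its homogenization. The standard estimate \( |\phi(g^n)/n - \bar\phi(g)| \le D/n \) holds uniformly in \( g \), so \( \bar\phi \) is a uniform limit of the continuous functions \( g \mapsto \phi(g^n)/n \) and is therefore itself continuous. Being homogeneous, \( \bar\phi \) is conjugation-invariant with \( \bar\phi(\mathrm{id}) = 0 \); the first observation gives \( \bar\phi \equiv 0 \) on \( H \). For an arbitrary \( g \in G \), choosing \( k \) with \( g^k \in H \) yields \( \bar\phi(g) = \bar\phi(g^k)/k = 0 \). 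Hence \( \bar\phi \equiv 0 \), and since \( |\phi - \bar\phi| \le D \), the quasimorphism \( \phi \) is bounded.

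Part (3) is the delicate one, and is where I expect the main obstacle. The natural function to feed into the principle is the asymptotic translation length \( \tau(g) = \lim_n d_X(x_0, \rho(g)^n x_0)/n \) of a continuous isometric action \( \rho \co G \to \isom(X) \); it is homogeneous, conjugation-invariant, and non-negative. However, as an infimum of the continuous displacements \( g \mapsto d_X(x_0, \rho(g)^n x_0)/n \) it is only \emph{upper} semicontinuous, and upper semicontinuity together with the dense conjugacy class yields only the one-sided bound \( \tau \ge \tau(g_0) = 0 \), which is vacuous; the direct argument therefore breaks down. The plan is instead to reduce (3) to (2) using the classification of isometric actions on Gromov hyperbolic spaces (Gromov; Caprace--Cornulier--Monod--Tessera). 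If some element were loxodromic (\( \tau > 0 \)), the action would be of lineal, focal, or general type, and each of these produces an \emph{unbounded} continuous homogeneous quasimorphism on \( G \)---the signed Busemann (translation-number) quasimorphism associated to the fixed boundary point(s) in the lineal and focal cases, and a Bestvina--Fujiwara counting quasimorphism built from the loxodromic axis in the general-type case---contradicting (2). Hence no element is loxodromic and \( \tau \equiv 0 \). The one step requiring genuine care is verifying that these quasimorphisms are continuous for the group topology on \( G \); this is where the bulk of the work lies.
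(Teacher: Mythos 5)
Your parts (1) and (2) are correct and essentially coincide with the paper's argument: the paper obtains (1) from exactly the displacement function \( g \mapsto d(\mathrm{id},g) \) and defers (2) to the standard homogenization facts that you have written out, and your bootstrap from the Rokhlin subgroup \( H = \overline{\Ht(M)} \) to all of \( \Homeo(M) \) (pigeonhole on cosets to get \( g^k \in H \), then homogeneity or the triangle inequality) is the right way to pass from the Rokhlin to the virtual Rokhlin setting.

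Part (3) is where you have a genuine gap, and it is twofold. First, your premise that ``the direct argument breaks down'' is mistaken: asymptotic translation length is indeed only upper semicontinuous for isometric actions on general metric spaces, but Gromov hyperbolicity supplies the missing lower bound (estimates of the form \( \tau(g) \geq d(x,g^2x) - d(x,gx) - C\delta \) make \( \tau \) a supremum, not just an infimum, of continuous functions up to additive error), so for a continuous action on a Gromov hyperbolic space the function \( g \mapsto \tau(g) \) \emph{is} continuous. This is precisely what the paper invokes: the sequence of lemmas establishing \cite[Theorem~3.4]{BowdenRotation}, which hold for an arbitrary continuous action (the paper's footnote even notes the continuity hypothesis can be weakened). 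With that fact, (3) follows by the same one-line argument as (1) and (2): \( \tau|_H \) is continuous and conjugation-invariant, hence constant equal to \( \tau(\mathrm{id}) = 0 \), and for arbitrary \( g \in \Homeo(M) \) your pigeonhole gives \( g^k \in H \), whence \( \tau(g) = \tau(g^k)/k = 0 \); since hyperbolic isometries have positive translation length, the ``in particular'' clause follows. Second, the substitute route you propose---reducing to (2) via the Gromov/Caprace--Cornulier--Monod--Tessera classification into lineal, focal, and general-type actions---does not close as written: its decisive step, continuity of the Busemann quasicharacter and of a Bestvina--Fujiwara counting quasimorphism with respect to the compact-open topology on \( \Homeo(M) \), is left unproven, and it is not a routine verification. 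Counting quasimorphisms are defined from combinatorial data along an axis and are in general badly discontinuous in a Polish group topology, and the known continuity results for Busemann quasicharacters are proved in the locally compact setting, which \( \Homeo(M) \) is not. You flag this yourself as ``where the bulk of the work lies''; as written, that work \emph{is} the proof, so your (3) is incomplete. The repair is simply to delete the classification detour and cite the continuity of \( \tau \) from \cite{BowdenRotation}.
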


Corollary~\ref{cor:geometric}(1) is a direct consequence of the preceding discussion by considering the function \( f\co \Homeo(M) \to \br \) given by \( f(g) = d( id, g) \), where \( d \) is the metric. 
Corollary~\ref{cor:geometric}(2) is deduced from standard facts about quasimorphisms (see \cite{scl}).
Corollary~\ref{cor:geometric}(3) uses the limit definition of asymptotic translation length (see \cite[II.6.6]{Bridson}) and the fact that if a group acts continuously on a Gromov hyperbolic metric space, then the associated asymptotic translation length function on the group is continuous (see the sequence of lemmas establishing \cite[Theorem~3.4]{BowdenRotation}\footnote{The sequence of lemmas is stated in the context of a particular action, but they hold in the general setting from Corollary~\ref{cor:geometric}, and in fact the continuity hypothesis can be weakened (c.f. \cite[Lemma~3.5]{BowdenRotation}).}). 

Corollary~\ref{cor:geometric} is meant to highlight the fact that the virtual Rokhlin property is a boundedness property and that it limits the potential interesting geometric actions a group can admit. 
We encourage the reader to compare Corollary~\ref{cor:geometric}(1) to the bounded groups of Burago--Ivanov--Polterovich \cite{BuragoConjugation}, i.e., groups in which every bi-invariant metric is bounded.

Here is one instance of Corollary~\ref{cor:geometric}(3): if \( M \) is spacious and \( \Homeo(M) \) acts on a tree continuously, then every element of \( \Homeo(M) \) acts elliptically (as trees do not have parabolic isometries); this is weaker than, but has a similar flavor to Serre's property (FA).
Recall, a group has \emph{property (FA)} if every action of the group on a tree has a global fixed point (see \cite{SerreTrees}). 
It is readily seen that a group with property (FA)  cannot surject onto \( \bz \); this is an example of the geometry of a group informing its algebraic structure. 
Below, we will also see a similar link for the virtual Rokhlin property; in particular, we see in Corollary~\ref{cor:homomorphism}, that if \( M \) is spacious, then \( \Homeo(M) \) cannot surject onto \( \bz \), an instance of the topology of the group informing its algebraic structure.

To finish this subsection, we note that boundedness properties of homeomorphism groups and mapping class groups of 2-manifolds have been previously studied.
For instance, one of Mann--Rafi's results in \cite{MannRafi} have shown \( \mcg(M) \) is coarsely bounded if \( M \) is self-similar or teslescoping, and the second author has shown that \( \Homeo(M) \) is strongly bounded if \( M \) is telescoping \cite{VlamisHomeomorphism, VlamisTelescoping}.

\subsection*{Algebraic consequence}

When combined with automatic continuity properties, the Rokhlin property can have algebraic consequences, for instance obstructing the existence of abstract homomorphisms to a large class of countable groups.
A group \( H \) is \emph{cm-slender} if the kernel of any abstract group homomorphism \( G \to H \) is open whenever \( G \) is completely metrizable.  
The class of such groups is quite large and includes free abelian groups, torsion-free word-hyperbolic groups, Baumslag--Solitar groups, Thompson's group \( F \), non-exceptional spherical Artin groups, and every torsion-free subgroup of a mapping class group of a finite-type 2-manifold (see \cite[Section~2.4]{VlamisHomeomorphism} for references). 

From the definitions, we see that every abstract group homomorphism from a completely metrizable topological group with the Rokhlin property to a cm-slender group is trivial.  
This statement can be extended to completely metrizable groups with the virtual Rokhlin property:

\begin{Prop}
\label{prop:trivial}
Every homomorphism from a completely metrizable topological group with the virtual Rokhlin property to a cm-slender group is trivial.
\end{Prop}

\begin{proof}
Let \( G \) be a completely metrizable topological group with the virtual Rokhlin property, let \( H \) be a cm-slender group, and let \( \vp \co G \to H \) be an abstract group homomorphism. 
Now let \( N < G \) be an open finite-index subgroup with the Rokhlin property.
Observe that a group with the Rokhlin property cannot have a proper open normal subgroup, as the complement of such a subgroup is also open and conjugation invariant.
Therefore, the intersection of all the conjugates of \( N \)---of which there are finitely many---is an open normal subgroup of \( N \) and hence equal to \( N \); in particular, \( N \) is a normal subgroup of \( G \). 
Now,  as \( \vp \) restricted to \( N \) must be trivial, we have that \( \vp \) factors through \( G/N \) and the image of \( \vp \) is finite.
It is well known that every cm-slender group is torsion free (this can be deduced from \cite[Example~1.4]{RosendalAutomatic}), and hence \( \vp \) must be trivial.
\end{proof}

Combining Theorem~\ref{thm:main1}, Proposition~\ref{prop:trivial}, and the fact that homeomorphism groups and mapping class groups are completely metrizable topological groups, we have the following corollary:

\begin{Cor}
\label{cor:homomorphism}
Let \( M \) be a 2-manifold.
If \( M \) is spacious, then every homomorphism from either \( \Homeo(M) \) or \( \mcg(M) \) to a cm-slender group is trivial.
\qed
\end{Cor}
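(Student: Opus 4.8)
The plan is to present this corollary as a direct application of Proposition~\ref{prop:trivial}, whose two hypotheses on the source group are complete metrizability and the virtual Rokhlin property. The entire task therefore reduces to verifying these two properties for \( \Homeo(M) \) and for \( \mcg(M) \) under the standing assumption that \( M \) is spacious; once both are in hand, triviality of every homomorphism to a cm-slender group is immediate.

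I would dispatch the homeomorphism group first. Since \( M \) is spacious, Theorem~\ref{thm:main1} supplies that \( \Homeo(M) \) has the virtual Rokhlin property. As \( \Homeo(M) \) with the compact-open topology is a Polish group—in particular completely metrizable—Proposition~\ref{prop:trivial} applies directly, and every homomorphism from \( \Homeo(M) \) to a cm-slender group is trivial. For the mapping class group I would split according to the cardinality of \( \mcg(M) \). When \( \mcg(M) \) is infinite, Theorem~\ref{thm:main2}—whose running hypothesis is precisely an infinite mapping class group—converts spaciousness of \( M \) into the virtual Rokhlin property for \( \mcg(M) \). When \( \mcg(M) \) is finite, it carries the virtual Rokhlin property trivially, since the trivial subgroup is open, of finite index, and the identity has dense conjugacy class in it. In either case \( \mcg(M) \), being a quotient of the Polish group \( \Homeo^+(M) \) (or \( \Homeo(M) \)) by the closed normal subgroup \( \Homeo_0(M) \), is again completely metrizable, so Proposition~\ref{prop:trivial} yields triviality of every homomorphism from \( \mcg(M) \) to a cm-slender group.

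The only genuine branch point, and thus the expected main obstacle, is the finite mapping class group case, where Theorem~\ref{thm:main2} is unavailable; the plan handles it by the trivial verification of virtual Rokhlin above. In fact this case can be dispatched even more cheaply: every cm-slender group is torsion free, so any homomorphism from a finite group to a cm-slender group is automatically trivial, bypassing Proposition~\ref{prop:trivial} altogether. The remaining steps—confirming that \( \Homeo(M) \) and \( \mcg(M) \) are completely metrizable—are routine, resting on the fact that homeomorphism groups of second-countable manifolds are Polish and that a quotient of a Polish group by a closed normal subgroup is again Polish. With both groups and both cardinality cases covered, the corollary follows.
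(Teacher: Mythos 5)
Your proof is correct, and its skeleton---reduce everything to Proposition~\ref{prop:trivial} by verifying complete metrizability and the virtual Rokhlin property for both groups---matches the paper, which disposes of the corollary in one line by combining Theorem~\ref{thm:main1}, Proposition~\ref{prop:trivial}, and the fact that both groups are completely metrizable. The one place you diverge is in how \( \mcg(M) \) acquires the virtual Rokhlin property. The paper does not invoke Theorem~\ref{thm:main2} at all: it relies on the remark at the end of Section~\ref{sec:preliminaries} that the quotient homomorphism from \( \Homeo^+(M) \) (resp.\ \( \Homeo(M) \)) onto \( \mcg(M) \) is open and continuous, so the virtual Rokhlin property passes directly from \( \Homeo(M) \) to \( \mcg(M) \) with no hypothesis on the cardinality of \( \mcg(M) \). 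Your route through Theorem~\ref{thm:main2} instead imports its standing assumption that \( \mcg(M) \) is infinite, which forces the case split you identify; this split is genuinely needed on your route, since the spacious manifolds \( \mathbb S^2 \), \( \br^2 \), and \( \mathbb S^1 \times \br \) all have finite mapping class groups. You dispatch that case correctly---a finite discrete group has the virtual Rokhlin property via its trivial subgroup, and your shortcut observing that cm-slender groups are torsion free (a fact the paper itself uses inside the proof of Proposition~\ref{prop:trivial}) kills the finite case even more directly. The trade-off: the paper's quotient argument is uniform and shorter, while yours is self-contained at the level of the stated theorems and makes explicit where finiteness matters; both are sound, and your verification that \( \mcg(M) \) is Polish as a quotient of a Polish group by the closed normal subgroup \( \Homeo_0(M) \) is the standard justification the paper leaves implicit.
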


\subsection*{Second countable Stone spaces}

A \emph{Stone space} is a compact zero-dimensional Hausdorff topological space. 
The end space of any manifold is a second-countable Stone space, and every second-countable Stone space can be realized as the end space of a planar 2-manifold. 
The homeomorphism group of a manifold \( M \) acts on the end space of \( M \) by homeomorphisms yielding a canonical homomorphism \( \Homeo(M) \to \Homeo(\Ends(M)) \). 
It follows from Richards's classification of surfaces \cite{Richards} that if \( M \) is a planar 2-manifold, then the canonical homomorphism \( \Homeo(M) \to \Homeo(\Ends(M)) \) is surjective. 
The discussion above yields the following corollary. 

\begin{Cor}
If \( E \) is a Stone space that can be realized as the end space of a spacious planar 2-manifold, then \( \Homeo(E) \) has the virtual Rokhlin property. 
\end{Cor}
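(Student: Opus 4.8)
The plan is to deduce this corollary almost immediately by combining the two main ingredients already established in the excerpt: Theorem~\ref{thm:main1}, which says that for a spacious 2-manifold the homeomorphism group has the virtual Rokhlin property, and the stated fact that for a planar 2-manifold the canonical homomorphism $\Homeo(M)\to\Homeo(\Ends(M))$ is surjective. So suppose $E$ is a Stone space realized as the end space of a spacious planar 2-manifold $M$; I want to transport the virtual Rokhlin property from $\Homeo(M)$ across the surjection onto $\Homeo(E)$.

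First I would fix a spacious planar $M$ with $\Ends(M)\cong E$ and let $\pi\co\Homeo(M)\to\Homeo(E)$ be the canonical (continuous, by definition of the compact-open topologies) homomorphism, which is surjective by Richards's classification as recalled in the excerpt. By Theorem~\ref{thm:main1}, $\Homeo(M)$ has the virtual Rokhlin property; let $H<\Homeo(M)$ be an open finite-index subgroup with the Rokhlin property. The key step is then to show that the image $\pi(H)$ is an open finite-index subgroup of $\Homeo(E)$ with the Rokhlin property.

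For the Rokhlin property of the image: if $g\in H$ has dense $H$-conjugacy class, then for any target $k\in\pi(H)$ and any basic open $U\ni k$, the preimage $\pi^{-1}(U)\cap H$ is a nonempty open subset of $H$ (continuity of $\pi$), so it meets the $H$-conjugacy class of $g$; applying $\pi$ and using that $\pi$ is a homomorphism shows the $\pi(H)$-conjugacy class of $\pi(g)$ meets $U$. Hence $\pi(g)$ has dense conjugacy class in $\pi(H)$, giving $\pi(H)$ the Rokhlin property. For finite index, $[\,\Homeo(E):\pi(H)\,]\le[\,\Homeo(M):H\,]<\infty$ since $\pi$ is onto and carries cosets onto cosets. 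The openness of $\pi(H)$ in $\Homeo(E)$ is the step I expect to be the main obstacle, since a continuous surjective homomorphism of topological groups need not be open in general.

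The cleanest way around the openness issue is to observe that openness is automatic once we know $\pi(H)$ has finite index: a finite-index subgroup of $\Homeo(E)$ is open precisely when it is closed, and here $\pi(H)$ is a finite-index subgroup whose finitely many $\Homeo(E)$-cosets partition the group, so each coset is the complement of the union of the others; thus it suffices to know $\pi(H)$ is closed, or alternatively to pass to the open finite-index normal core. Concretely, I would take $N=\bigcap_{t\in\Homeo(E)}t\,\pi(H)\,t^{-1}$, a finite-index normal subgroup of $\Homeo(E)$ (finitely many conjugates since the index is finite); since $\Homeo(E)$ is a Polish group and $N$ has countable—indeed finite—index, a Baire-category argument (a countable-index subgroup of a Baire topological group with the Baire property, such as any finite-index subgroup, is open) shows $N$ is open, and one checks $N\cap\pi(H)$ still inherits the Rokhlin property by the same conjugacy-class argument as above, or one simply verifies directly that $\pi(H)\supseteq N$ makes $\pi(H)$ open. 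Assembling these observations exhibits an open finite-index subgroup of $\Homeo(E)$ with the Rokhlin property, which is exactly the virtual Rokhlin property for $\Homeo(E)$.
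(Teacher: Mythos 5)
Your overall route is exactly the paper's: the paper offers no separate written proof of this corollary, deriving it directly from Theorem~\ref{thm:main1} (spacious \( \Rightarrow \) \( \Homeo(M) \) has the virtual Rokhlin property) together with the surjectivity of the canonical map \( \pi \co \Homeo(M) \to \Homeo(\Ends(M)) \) for planar \( M \), and you correctly fill in the transfer: \( \pi(H) \) has finite index since \( \pi \) is onto, and the density of a conjugacy class pushes forward through the continuous surjection \( \pi|_H \co H \to \pi(H) \) (via \( \pi(\overline{D}) \subseteq \overline{\pi(D)} \)). You are also right that openness of \( \pi(H) \) is the one step that does not come for free, since continuous surjective homomorphisms of topological groups need not be open.

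However, your justification for that step contains a genuine error: it is \emph{not} true that every finite-index subgroup of a Baire (or even Polish) group has the Baire property --- under the axiom of choice, \( (\mathbb Z/2)^{\mathbb N} \) has index-two subgroups, built from nonprincipal ultrafilters, that lack the Baire property and are not open. So the parenthetical ``such as any finite-index subgroup'' cannot carry the argument. The fix is standard and available in your setting: since \( E \) is a second-countable Stone space, \( E \) is compact metrizable and \( \Homeo(E) \) is Polish; since \( H \) is open in the Polish group \( \Homeo(M) \), \( H \) is itself Polish, so \( \pi(H) \) is an \emph{analytic} subset of \( \Homeo(E) \) and therefore does have the Baire property (Lusin--Sierpi\'nski). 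Having finite index in a Baire group, \( \pi(H) \) is non-meager (else finitely many meager cosets would cover the group), and Pettis's theorem then gives that \( \pi(H)\,\pi(H)^{-1} = \pi(H) \) contains a neighborhood of the identity, i.e., \( \pi(H) \) is open. Note that with analyticity in hand this applies to \( \pi(H) \) directly, so your detour through the normal core \( N = \bigcap_{t} t\,\pi(H)\,t^{-1} \) is unnecessary (though also repairable by the same observation, as a finite intersection of analytic conjugates is analytic). With that one substitution your proof is complete and agrees with the paper's intended argument.
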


Note that the converse of the corollary is false: the homeomorphism group of the Cantor set has the Rokhlin property \cite{GW}, but it is not the end space of a spacious 2-manifold (see Lemma~\ref{lem:self-similar_spacious}).

\subsection*{Acknowledgements}

The authors thank Kathryn Mann for helpful discussions and the referee for their careful reading and helpful comments. 
The first author is supported by NSF Grant DMS--2002187.
The second author is supported by NSF Grant DMS--2212922 and PSC-CUNY Award \#~64129-00 52.


\section{Preliminaries}
\label{sec:preliminaries}

We give here a small collection of preliminaries; for further details and background, we refer the reader to Section~2 of our article \cite{LanierMapping}, to which this article is a sequel.
We slightly break with the notation in \cite{LanierMapping} in that we assume 2-manifolds to be connected. 
We will remind the reader of essential background items from \cite{LanierMapping} as they appear in the article. 
Here, we briefly introduce several definitions and notations that will be used in several sections.

The \emph{compact-open topology} on the homeomorphism group \( \Homeo(M) \) of a 2-manifold \( M \) is generated by the sets of the form \( U(K,W) = \{ f \in \Homeo(M) : f(K) \subset W \} \), where \( K \subset M \) is compact and \( W\subset M \) is open and precompact\footnote{The precompact condition is not standard, but this additional requirement does not change the topology generated, as \( K \) is compact and \( M \) is locally compact.}.

A topological group \( G \) has the \emph{joint embedding property}, or \emph{JEP}, if for any two open subsets \( U \) and \( V \) in \( G \), there exists \( g \in G \) such that \( U \cap V^g \neq \varnothing \).
Here, \( V^g = \{ gvg^{-1}: v \in V \} \), and more generally, we will write \( v^g = gvg^{-1} \) for \( v,g \in G \). 
The following proposition is well known (see \cite[Theorem~2.2]{LanierMapping} for a proof):

\begin{Prop}
A Polish group has the Rokhlin property if and only if it has the joint embedding property.
\end{Prop}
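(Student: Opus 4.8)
The plan is to prove the equivalence between the Rokhlin property and the joint embedding property for a Polish group $G$ by exploiting the fact that a dense conjugacy class gives a Baire-category ``generic'' element, and conversely that JEP lets us build such an element by a back-and-forth construction. I would organize the argument around the characterization of the Rokhlin property in terms of the comeager orbit/dense orbit of the conjugation action, so the whole proof becomes an application of standard Baire-category machinery for Polish $G$-spaces.

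\textbf{The ($\Leftarrow$) direction.} Assume $G$ has JEP. The goal is to produce an element with dense conjugacy class. I would first observe that since $G$ is Polish (hence second countable), I can fix a countable basis $\{U_n\}_{n\in\bn}$ of nonempty open sets. The JEP says precisely that for any two basic open sets $U$ and $V$ there is $g$ with $U\cap V^g\neq\varnothing$; iterating this, for any finite list $U_{n_1},\dots,U_{n_k}$ one can find a single conjugate of, say, a chosen set that meets all of them simultaneously---more usefully, I would phrase JEP as a density statement about the conjugation action. Concretely, consider the conjugation action of $G$ on itself and the associated sets $D_n = \{ g \in G : g \text{ has a conjugate in } U_n \} = \bigcup_{h\in G} U_n^{\,h}$. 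Each $D_n$ is open (a union of open sets) and conjugation-invariant. The key point is that JEP forces each $D_n$ to be \emph{dense}: given any basic open $V$, JEP applied to $U_n$ and $V$ yields $g$ with $U_n \cap V^g \neq \varnothing$, and conjugating shows $V$ meets $D_n$. Then $\bigcap_n D_n$ is a dense $G_\delta$ by the Baire category theorem, and any element $g$ in this intersection has a conjugate in every basic open set $U_n$, i.e.\ $g$ has dense conjugacy class. This gives the Rokhlin property.

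\textbf{The ($\Rightarrow$) direction.} Assume $G$ has the Rokhlin property, witnessed by $g_0$ whose conjugacy class $C = \{g_0^{\,h} : h\in G\}$ is dense in $G$. Given open $U,V$, I want $g$ with $U\cap V^g\neq\varnothing$. Since $C$ is dense it meets $U$, so some conjugate $g_0^{\,h_1}\in U$; likewise $C$ meets $V$, so $g_0^{\,h_2}\in V$. Then $g_0^{\,h_1}$ and $g_0^{\,h_2}$ are conjugate to each other: setting $g = h_1 h_2^{-1}$ gives $(g_0^{\,h_2})^{g} = g_0^{\,h_1}\in U$, so the conjugate $(g_0^{\,h_2})^g$ lies in $U$ while $g_0^{\,h_2}\in V$, witnessing $U\cap V^g\neq\varnothing$ after the appropriate bookkeeping of which element we conjugate. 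This direction is essentially immediate from density of a single conjugacy class.

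\textbf{Main obstacle.} The conceptual content is light---both directions are short once the right reformulation is in place---so the only real care needed is the point-set bookkeeping in the JEP $\Rightarrow$ Rokhlin direction: verifying that the sets $D_n$ are genuinely open (this uses that conjugation $g\mapsto g^h$ is a homeomorphism for each fixed $h$, so each $U_n^h$ is open) and that second countability legitimately reduces ``dense conjugacy class'' to ``meets every $U_n$.'' The hardest step to state cleanly is confirming that the Baire category argument applies, which relies on $G$ being Polish (completely metrizable and separable) and hence a Baire space; I would make sure to invoke exactly this hypothesis rather than mere topological-group structure. Since the excerpt states this proposition is well known and cites \cite[Theorem~2.2]{LanierMapping} for a proof, I would keep the argument to this skeleton and not belabor the routine verifications.
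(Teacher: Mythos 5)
Your proof is correct and is essentially the standard argument the paper relies on (it cites \cite[Theorem~2.2]{LanierMapping} rather than reproving it): the forward direction is the immediate two-conjugates computation, and the reverse direction is the Baire-category argument with the open, conjugation-invariant, dense sets \( D_n = \bigcup_{h \in G} U_n^h \) over a countable basis. Both directions check out, including the correct observation that Polishness (second countability plus the Baire property) is needed only for the JEP-implies-Rokhlin direction.
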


Before turning to discussing 2-manifolds, we record a fact that comes out of the proof of Proposition~\ref{prop:trivial} that is useful in its own right.

\begin{Prop}
\label{prop:vr-normal}
Let \( G \) be a topological group with the virtual Rokhlin property. 
If \( N  \) is a closed finite-index subgroup of \( G \) with the Rokhlin property, then 
\begin{enumerate}[(i)]
\item \( N \) has no proper open normal subgroups.
\item \( N \) is normal in \( G \).
\item \( N \) is the unique closed finite-index subgroup of \( G \) with the Rokhlin property.
\end{enumerate}
\end{Prop}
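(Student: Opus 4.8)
The plan is to establish part (i) first, since it is the engine that drives the other two parts, and then to deduce (ii) and (iii) by standard subgroup manipulations, invoking repeatedly the elementary observation that a closed subgroup of finite index in a topological group is automatically open (its complement is a finite union of closed cosets, hence closed).

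For part (i), I would show directly that any group with the Rokhlin property has no proper open normal subgroup. Suppose toward a contradiction that \( H \) were a proper open normal subgroup of \( N \). Since \( H \) is open it is also closed, as its complement is a union of open cosets; hence both \( H \) and \( N \setminus H \) are nonempty clopen sets, and both are conjugation invariant because \( H \) is normal. If \( g \in N \) has dense conjugacy class, then that conjugacy class is contained entirely in whichever of the two clopen sets contains \( g \), and so it cannot be dense in \( N \)—a contradiction. This yields (i).

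For part (ii), note first that since \( N \) is closed of finite index it is open, and therefore each conjugate \( gNg^{-1} \) is open as well. Because \( N \) has finite index there are only finitely many distinct conjugates, so the normal core \( \bigcap_{g\in G} gNg^{-1} \) is a finite intersection of open subgroups of \( N \), hence itself an open subgroup of \( N \); and it is normal in \( N \) since it is normal in \( G \). By part (i) this core must equal \( N \), which forces \( N \subseteq gNg^{-1} \) for every \( g \). Applying this with \( g \) and with \( g^{-1} \) gives \( N = gNg^{-1} \) for all \( g \), so \( N \) is normal in \( G \).

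For part (iii), let \( N' \) be any closed finite-index subgroup of \( G \) with the Rokhlin property. By (ii) both \( N \) and \( N' \) are normal in \( G \), and both are open by the remark above. Then \( N \cap N' \) is open in \( N \) and normal in \( N \), so part (i) gives \( N \cap N' = N \), that is \( N \subseteq N' \); the symmetric argument applied to \( N' \) (which also enjoys (i)) gives \( N' \subseteq N \), whence \( N = N' \). The only genuinely substantive step is part (i), namely the clopen/conjugation-invariance dichotomy that rules out proper open normal subgroups; everything afterward is routine bookkeeping with finite-index subgroups, so I do not anticipate a serious obstacle once (i) is in place.
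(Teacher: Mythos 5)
Your proposal is correct and takes essentially the same route as the paper: the paper establishes (i) and (ii) inside the proof of Proposition~\ref{prop:trivial} via exactly your clopen/conjugation-invariance dichotomy and the finitely-many-conjugates normal core argument, and proves (iii) by the same intersection argument \( N \cap N' \). Your explicit remark that a closed finite-index subgroup is automatically open is a detail the paper leaves implicit, but the substance is identical.
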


\begin{proof}
Both (i) and (ii) were proved in the process of proving Proposition~\ref{prop:trivial}.
For (iii), if \( N' \) is another closed finite-index subgroup of \( G \) with the Rokhlin property, then \( N' \cap N \) is an open normal subgroup of both \( N \) and \( N' \), and hence by (i), \( N' \cap N = N \) and \( N' \cap N = N' \); in particular, \( N = N' \). 
\end{proof}

Next, let us record two facts about weakly self-similar 2-manifolds.
The first follows from a collection of results in \cite{MannRafi} (and is summarized in \cite[Theorem~2.4]{LanierMapping}).

\begin{Prop}
\label{prop:ss-partition}
If \( M \) is a weakly self-similar 2-manifold, then its set of maximally stable ends is either a singleton, doubleton, or is perfect.
Moreover, a weakly self-similar 2-manifold is self-similar if and only if its set of maximally stable ends is either a singleton or is perfect. 
\end{Prop}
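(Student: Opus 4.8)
The plan is to prove both assertions together by splitting on whether \( M \) is self-similar, using the identification---noted just before the statement---of the maximally stable ends of \( M \) with the ends that are maximal in the Mann--Rafi preorder. Write \( \mathcal M \) for the (closed, \( \Homeo(M) \)-invariant) set of maximally stable ends of \( M \). When \( M \) is self-similar, I would invoke the structure theory of Mann--Rafi \cite{MannRafi}: self-similarity forces \( \mathcal M \) to be either a single point or a nonempty perfect set, and a nonempty perfect second-countable Stone space is a Cantor set. This already yields the implication that a self-similar \( M \) has \( \mathcal M \) a singleton or perfect, matching the uniquely/perfectly self-similar dichotomy recorded above.

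It then remains to analyze a weakly self-similar \( M \) that is not self-similar. By \cite[Theorem~2.4 and Lemma~5.4]{LanierMapping}, such an \( M \) is non-compact and homeomorphic to \( N\#N \) for some non-compact self-similar 2-manifold \( N \) with \( M\not\cong N \). Since the connect-sum neck is compact, the end space splits as \( \cE(M)\cong\cE(N)\sqcup\cE(N) \). The preorder respects this clopen splitting, and the maximal ends of the two copies dominate one another through the homeomorphism interchanging the summands; consequently the maximal ends of \( M \) are exactly those coming from each copy, so \( \mathcal M\cong\mathcal M_N\sqcup\mathcal M_N \), where \( \mathcal M_N\neq\varnothing \) is the set of maximal ends of \( N \) (nonempty since \( N \) is non-compact). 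By the self-similar case applied to \( N \), either \( \mathcal M_N \) is a singleton---whence \( \mathcal M \) is a doubleton---or \( \mathcal M_N \) is a Cantor set.

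The crux, which I expect to be the main obstacle, is to rule out this last alternative. If \( \mathcal M_N \) were perfect, I would show \( N\#N\cong N \), contradicting \( M\not\cong N \). First, weak self-similarity forbids finite positive genus: a compact subsurface carrying all of a finite, positive amount of genus cannot be displaced, since its image under a displacing homeomorphism would yield a second disjoint genus-bearing piece and double the total genus. Hence \( N \) has genus \( 0 \) or infinite genus, and in either case \( N\#N \) and \( N \) have equal genus. What remains is to match end spaces together with their non-planar (and, where relevant, non-orientable) subspaces, which reduces to showing that the end space of a perfectly self-similar surface absorbs its own disjoint double---precisely the self-similarity input of the kind isolated in \cite[Lemma~5.4]{LanierMapping}. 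Feeding this into the classification of surfaces \cite{Richards} gives \( N\#N\cong N \), the desired contradiction; thus \( \mathcal M_N \) is a singleton and \( \mathcal M \) is a doubleton.

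Assembling the cases, \( \mathcal M \) is a singleton or perfect when \( M \) is self-similar and a doubleton when \( M \) is weakly self-similar but not self-similar; in particular \( \mathcal M \) is always a singleton, doubleton, or perfect, which is the trichotomy. For the ``moreover'' clause, the forward implication is the self-similar case, and its reverse follows by contraposition: a weakly self-similar manifold that is not self-similar has \( \mathcal M \) a doubleton, hence neither a singleton nor perfect.
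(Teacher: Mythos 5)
Your argument is correct and takes essentially the same route as the paper, which does not prove the proposition directly but defers to precisely the ingredients you assemble: the Mann--Rafi singleton-or-Cantor dichotomy for the maximal ends of a self-similar surface \cite{MannRafi}, and the decomposition \( M \cong N \# N \) together with the perfectly-self-similar absorption \( N \# N \cong N \) coming from \cite[Theorem~2.4 and Lemma~5.4]{LanierMapping}, which forces \( N \) to be uniquely self-similar and \( \mathcal M \) to be a doubleton in the non-self-similar case. The one step you state briskly---that the preorder on \( \cE(N \# N) \cong \cE(N) \sqcup \cE(N) \) restricts compatibly to the two clopen copies, so the maximal ends of \( M \) are exactly those of the summands---is covered by the standard local characterization of the Mann--Rafi preorder (ambient homeomorphisms can be traded for clopen embeddings of end-space neighborhoods via the usual swap argument), so no genuine gap remains.
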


The second fact  was hinted at in the introduction and is a consequence of the classification of surfaces and \cite[Theorem~2.4 and Lemma~5.4]{LanierMapping}.

\begin{Prop}
\label{prop:uniform}
Let \( M \) be a weakly self-similar 2-manifold with exactly two maximally stable ends.
Then, there exists a uniquely self-similar 2-manifold \( N \) such that \( M \) is homeomorphic to \( N \# N \).
Moreover, given any simple closed curve \( c \) in \( M \) separating the maximally stable ends of \( M \), the closure of each of the connected components of \( M \ssm c \) is homeomorphic to \( N \) with an open disk removed. 
\qed
\end{Prop}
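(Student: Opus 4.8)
The plan is to produce the self-similar manifold $N$ and then leverage the already-cited machinery from \cite{LanierMapping}. First I would recall (Proposition~\ref{prop:ss-partition} together with its accompanying cited results) that a weakly self-similar $M$ with exactly two maximally stable ends is \emph{not} self-similar. Invoking \cite[Theorem~2.4 and Lemma~5.4]{LanierMapping} exactly as in the paragraph of the introduction where the author observes $M \cong N \# N$, one obtains a self-similar 2-manifold $N$ with $M \cong N \# N$. The key refinement needed here is that $N$ can be taken \emph{uniquely} self-similar: since $M$ has a doubleton of maximally stable ends, the two maximally stable ends of $M$ must correspond to a single maximally stable end of each $N$-summand, which forces $N$ to have a unique maximally stable end — i.e.\ $N$ is uniquely self-similar by the dichotomy recorded just before the statement (a self-similar 2-manifold is either uniquely or perfectly self-similar). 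If $N$ were perfectly self-similar, the connect sum would contribute a perfect set of maximally stable ends, contradicting the doubleton hypothesis.

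For the ``Moreover'' clause, I would fix a simple closed curve $c$ separating the two maximally stable ends of $M$ and analyze the two complementary pieces. Let $\Sigma_1, \Sigma_2$ denote the closures of the two connected components of $M \ssm c$; each is a one-holed subsurface that is a neighborhood of exactly one of the two maximally stable ends. The goal is to identify each $\Sigma_i$ with $N$ minus an open disk. Because $\Sigma_i$ contains exactly one maximally stable end of $M$ and no others (the other maximally stable end lies in $\Sigma_{3-i}$), the end space and genus data of $\Sigma_i$ must match those of a neighborhood of the unique maximally stable end of $N$. I would then appeal to Richards's classification to conclude that $\Sigma_i$ is homeomorphic to $N$ with an open disk removed: one checks that $\Sigma_i$ is self-similar with the correct (unique) maximally stable end, and that capping the boundary circle of $\Sigma_i$ with a disk yields a 2-manifold homeomorphic to $N$.

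The main obstacle I anticipate is showing that the homeomorphism type of $\Sigma_i$ is independent of the choice of separating curve $c$ and that it genuinely reproduces $N$ rather than some other self-similar manifold sharing the same maximal end. This is where the maximal-stability hypothesis does the essential work: the defining property of a maximally stable end supplies a nested neighborhood sequence $\{\Sigma_n\}$ with each $\Sigma_n$ homeomorphic to $N$ minus an open disk and $\bigcap_n \Sigma_n = \varnothing$. I would use such a sequence to ``absorb'' any separating curve $c$: for $n$ large, $\Sigma_n$ is disjoint from $c$ and lies inside one of the complementary pieces, say $\Sigma_i$, and the region between $c$ and $\Sigma_n$ is compact. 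A weak self-similarity (displaceability) argument, of the kind underlying \cite[Theorem~2.4 and Lemma~5.4]{LanierMapping}, then shows that attaching this compact collar does not change the homeomorphism type, so $\Sigma_i \cong \Sigma_n \cong N \ssm (\text{open disk})$. Since this holds for each $i$ and for every separating $c$, the ``Moreover'' clause follows, and the uniqueness of $N$ up to homeomorphism is automatic because $N$ is recovered (by capping) from the homeomorphism type of the complementary pieces.
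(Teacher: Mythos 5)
The first half of your proposal is fine and is essentially the paper's own justification: the paper states this proposition with no written proof, asserting it as a consequence of the classification of surfaces together with \cite[Theorem~2.4 and Lemma~5.4]{LanierMapping}, and your argument that \( N \) must be uniquely rather than perfectly self-similar (via Proposition~\ref{prop:ss-partition}, since a perfectly self-similar \( N \) satisfies \( N\#N \cong N \) and would force a perfect set of maximally stable ends) is a correct way to pin that down.

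The gap is in your treatment of the ``Moreover'' clause. Your pivotal claim---that for large \( n \) the region between \( c \) and \( \Sigma_n \) is compact---is false in general. The subsurface \( \Sigma_n \) is a neighborhood of the single end \( \mu_i \), while the half \( \Sigma_i \) typically contains many other, non-maximal ends; the region between \( c \) and \( \partial\Sigma_n \) has end set \( \widehat{\Sigma_i} \ssm \widehat{\Sigma_n} \), which is usually nonempty (and, since \( \bigcap_n \Sigma_n = \varnothing \) forces the \( \Sigma_n \) to eventually miss every compact set, it only grows with \( n \)). Concretely, if \( M \) is the sphere with a set homeomorphic to \( \omega\cdot 2 + 1 \) removed, infinitely many isolated punctures can lie between \( c \) and every \( \Sigma_n \) of the given sequence; in the ladder surface the region carries genus. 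Moreover, even when the region is compact, ``attaching a compact collar does not change the homeomorphism type'' is not a displaceability fact---gluing a genus-one collar to a planar piece certainly changes the type. What actually makes the absorption work is that whatever genus and ends the region carries must already accumulate into the stable end \( \mu_i \): one needs the Mann--Rafi stable-set calculus (clopen neighborhoods of \( \mu_i \) contained in a stable neighborhood are homeomorphic to it, and stable neighborhoods absorb clopen sets of lower-type ends; see Lemma~\ref{lem:structure}) together with genus and orientability bookkeeping via the classification of surfaces. This is precisely the content the paper outsources to \cite[Lemma~5.4]{LanierMapping}. A correct repair along your lines: choose a deep neighborhood \( \Sigma'_m \cong N \ssm (\text{open disk}) \) of the \emph{other} maximal end inside the interior of \( \Sigma_{3-i} \), establish that the closure of \( M \ssm \Sigma'_m \) is again homeomorphic to \( N \) with an open disk removed (the Lemma~5.4 step), and then sandwich \( \Sigma_i \) between \( \Sigma_n \) and this complement, concluding that \( \widehat{\Sigma_i} \cong \widehat{N} \) via Lemma~\ref{lem:structure}(1) and finishing with the classification of surfaces.
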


Finally, we note that if \( M \) is orientable (resp., non-orientable), then  \( \Homeo_0(M) \) is a closed subgroup of \( \Homeo^+(M) \) (resp., \( \Homeo(M) \)), and hence the quotient homomorphism from \( \Homeo^+(M) \) (resp., \( \Homeo(M) \)) to \( \mcg(M) \)  is open and continuous.
In particular, if \( \Homeo(M) \) has the virtual Rokhlin property, then so does \( \mcg(M) \).


\section{Reducing to the case of two maximally stable ends}

To prove our main theorem, we partition 2-manifolds into four categories, as we did in \cite[Theorem~2.4]{LanierMapping} (with updated nomenclature).
Recall that a self-similar 2-manifold is \emph{uniquely self-similar} if it has a unique maximally stable end and it is \emph{perfectly self-similar} if its set of maximally stable ends is perfect\footnote{Recall that the empty set is perfect; in particular, the 2-sphere is perfectly self-similar. Malestien--Tao  in \cite{MalesteinSelf} use the terminology \emph{uniformly self-similar} instead of perfectly self-similar.}.
Using Proposition~\ref{prop:ss-partition}, we can now partition the class of 2-manifolds into four categories as follows:
\begin{enumerate}
\item uniquely self-similar 2-manifolds,
\item perfectly self-similar 2-manifolds,
\item weakly self-similar 2-manifolds with two maximally stable ends (and hence those that are not self-similar), and
\item non-weakly self-similar 2-manifolds.
\end{enumerate}

The goal of this section is to reduce the proof of Theorem~\ref{thm:main1} to the study of weakly self-similar 2-manifolds with exactly two maximally stable ends, i.e., in this section we will handle the proof of Theorem~\ref{thm:main1}  for  cases (1), (2), and (4)  above.
To do this, we rely heavily on the authors' previous work \cite{LanierMapping}.  
We note that the results in \cite{HHMRSV, LanierMapping} assume the 2-manifolds under consideration are orientable and are about mapping class groups rather than homeomorphism groups. 
However, Estrada \cite{EstradaConjugacy} extended the results to the non-orientable setting (see also \cite{VlamisHomeomorphism}) and the second author \cite[Theorem~11.1]{VlamisHomeomorphism} has extended the results to the setting of homeomorphism groups. 
Together these results yield:

\begin{Thm}
\label{thm:import}
Let \( M \) be a 2-manifold with infinite mapping class group. 
If \( M \) is orientable, let \( H = \Homeo^+(M) \); otherwise, let \( H = \Homeo(M) \).
Then \( H \) (resp., \( \mcg(M) \)) has the Rokhlin property if and only if \( M \) is uniquely self-similar.  
\qed
\end{Thm}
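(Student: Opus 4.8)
The plan is to work throughout with the equivalence, for Polish groups, between the Rokhlin property and the joint embedding property (JEP) recorded above, and to prove the sharper statement that \( H \) (equivalently \( \mcg(M) \)) has JEP precisely when \( M \) is uniquely self-similar. Since \( \Homeo^+(M) \) (resp.\ \( \Homeo(M) \)) surjects continuously and openly onto \( \mcg(M) \), a dense conjugacy class in \( H \) maps to a dense conjugacy class in \( \mcg(M) \), so the Rokhlin property always passes from \( H \) down to \( \mcg(M) \). Consequently I would establish the positive direction at the level of \( H \) and exhibit every obstruction in the negative direction already at the level of \( \mcg(M) \), so that both statements follow at once.

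For the forward implication I would assume \( M \) is uniquely self-similar and verify JEP directly. A basic open set in \( H \) constrains a homeomorphism's behavior only on some compact set, so given open \( U \ni f \) and \( V \ni g \) it suffices to produce a single homeomorphism \( h \in U \) together with \( \phi \in H \) and \( g' \in V \) with \( h = \phi g' \phi^{-1} \). Using weak self-similarity (displaceability of proper compact sets) I would push the compact support data constraining \( V \) off that constraining \( U \), and then invoke the self-similarity of \( M \)---every separating simple closed curve has a complementary component homeomorphic to \( M \) with a point removed---to build an \( h \) that realizes the \( U \)-behavior on one piece and a conjugated copy of the \( V \)-behavior on the displaced piece. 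The crucial role of \emph{unique} self-similarity is that the single maximal end provides one coherent reservoir into which all of \( M \) can be absorbed, so that the conjugating homeomorphism \( \phi \) can be chosen to carry the entire displaced configuration uniformly toward that end; this is exactly what upgrades the local matching into the genuine conjugacy \( h = \phi g' \phi^{-1} \).

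For the reverse implication I would take \( M \) not uniquely self-similar and, using Proposition~\ref{prop:ss-partition}, split into the three surviving cases. If \( M \) has exactly two maximally stable ends, then whether a homeomorphism swaps them defines a continuous surjection \( \mcg(M) \to \bz/2\bz \); being valued in an abelian group it is conjugation invariant and non-constant, so no conjugacy class can be dense and the Rokhlin property fails. If \( M \) is not weakly self-similar, then some proper compact set is non-displaceable, and the resulting asymmetry between the two sides of a suitable separating curve yields a non-constant conjugation-invariant count (of genus, crosscaps, or immediate predecessors) valued in a discrete set, again obstructing a dense conjugacy class. The perfectly self-similar case is the genuinely delicate one: here the induced action on the perfect set of maximal ends lands in \( \Homeo \) of a Cantor set, which itself has a dense conjugacy class, so no obstruction is visible from the end action, and one must instead show directly that the absorption argument above cannot be repaired once the maximal end is no longer unique. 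I would quote this last failure, together with the non-orientable and homeomorphism-group versions, from \cite{LanierMapping, HHMRSV, EstradaConjugacy, VlamisHomeomorphism}.

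The hard part will be the perfectly self-similar case of the reverse implication: unlike the other two failures it is not detected by any evident continuous conjugation-invariant function (the natural candidate, the end action, is itself Rokhlin), so the obstruction to JEP must be located in the finer combinatorics of how a single element can simultaneously approximate prescribed behaviors near a Cantor set of indistinguishable maximal ends. Making the positive-direction absorption fully uniform across the non-compact \( M \) is the second delicate point; both are precisely the technical contributions supplied by the cited works, which is why the theorem is recorded here as their synthesis.
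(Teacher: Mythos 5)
The paper contains no internal proof of Theorem~\ref{thm:import}: it is recorded as a pure import, with the orientable mapping-class-group case coming from \cite{LanierMapping, HHMRSV}, the non-orientable case from \cite{EstradaConjugacy}, and the homeomorphism-group upgrade from \cite[Theorem~11.1]{VlamisHomeomorphism}. At the top level your proposal matches this: you use the JEP criterion, you correctly note that the Rokhlin property descends along the open continuous surjection \( H \to \mcg(M) \) (so positive results are proved upstairs and obstructions downstairs), you sketch the displacement-and-absorption argument toward the unique maximal end that is indeed the mechanism of \cite[Lemma~4.3]{LanierMapping} and \cite[Theorem~11.6]{VlamisHomeomorphism}, and you defer the genuinely hard perfectly self-similar case and the non-orientable/homeomorphism versions to exactly the right references (the perfectly self-similar failure is \cite[Lemma~4.8]{LanierMapping}, invoked in this paper inside Lemma~\ref{lem:ss-case}). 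Your two-ended obstruction is also correct: the swap action on the doubleton of maximally stable ends is a continuous homomorphism onto \( \bz/2\bz \) (continuous because the stabilizer of a separating curve's sides is open), its nontrivial coset is open and conjugation-invariant, and this is precisely the ``more trivial'' obstruction the introduction alludes to.

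There is, however, one genuine gap in the branch you claim to handle yourself. For \( M \) not weakly self-similar you assert that a non-displaceable compact set yields ``a non-constant conjugation-invariant count (of genus, crosscaps, or immediate predecessors) valued in a discrete set.'' No such counting function exists in general: those counts are well-defined homomorphisms only in the presence of exactly two maximally stable ends and a curve separating them (this is the content of Corollary~\ref{cor:cohomology} and the references cited there). Consider, for instance, the infinite-genus surface with one non-planar end and one isolated planar puncture: a compact genus-one subsurface separating the puncture from the genus is non-displaceable (its image would have to lie in the infinite-genus side, forcing a planar punctured disk to contain genus), yet none of the quantities you name gives a well-defined conjugation-invariant function on \( \mcg(M) \) here. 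The actual obstruction in this case, in \cite[Theorem~3.1]{LanierMapping} (recorded in strengthened form as Lemma~\ref{lem:displaceable} in this paper), is a direct violation of the JEP: one constructs open sets \( U \) and \( V \)---with \( U \) forcing the behavior of a high-power pseudo-Anosov supported on the non-displaceable subsurface, detected via subsurface projections, and \( V \) forcing certain curves to remain close to themselves---such that \( U \cap V^\sigma = \varnothing \) for every \( \sigma \); the same template appears in this paper in Lemmas~\ref{lem:homeo0_no} and~\ref{lem:homeo0_o} and in the proof of Theorem~\ref{thm:2}. So your case decomposition is correct, but this branch requires either that pseudo-Anosov/JEP argument or an explicit citation, not a counting homomorphism.
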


The infinite mapping class group assumption was added as previous work does not handle the case  of the projective plane, which we will deal with later in this section.

\begin{Lem}
\label{lem:self-similar_spacious}
A non-compact self-similar 2-manifold is spacious if and only if it is uniquely self-similar. 
\end{Lem}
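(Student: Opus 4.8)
The plan is to prove both directions of the equivalence, leveraging Theorem~\ref{thm:import} and the structural results about self-similar 2-manifolds. Recall that a non-compact self-similar 2-manifold has its set of maximally stable ends either a singleton (uniquely self-similar) or perfect (perfectly self-similar) by Proposition~\ref{prop:ss-partition}. So the two directions amount to showing: (a) a uniquely self-similar 2-manifold is spacious, and (b) a perfectly self-similar non-compact 2-manifold is \emph{not} spacious.

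For direction (a), suppose \( M \) is uniquely self-similar with maximally stable end \( \mu \). Then by definition \( \Ht(M) \) consists of homeomorphisms supported outside a neighborhood of \( \mu \). The key observation is that for a uniquely self-similar manifold, the closure of \( \Ht(M) \) should be all of \( \Homeo(M) \) — indeed, Theorem~\ref{thm:import} tells us \( \Homeo(M) \) (or \( \Homeo^+(M) \)) already has the Rokhlin property, and displaceability of compact sets lets us push the support of any homeomorphism off any prescribed neighborhood of \( \mu \) up to approximation in the compact-open topology. Since \( \Ht(M) \) is (densely, hence effectively for orbit-counting purposes) large, the action of \( \Ht(M) \) on \( \Homeo(M)\cdot c \) has a single orbit class for any simple closed curve \( c \): given two homeomorphic translates of \( c \), weak self-similarity provides a homeomorphism carrying one to the other, and unique stability lets us arrange this homeomorphism to lie in \( \Ht(M) \) up to the orbit-class equivalence. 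Hence \( M \) is spacious.

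For direction (b), suppose \( M \) is perfectly self-similar and non-compact, so its maximally stable ends form a nonempty perfect set \( P \). The goal is to exhibit a simple closed curve \( c \) whose \( \Homeo(M) \)-orbit breaks into infinitely many \( \Ht(M) \)-orbit classes. The natural candidate is a separating curve \( c \) that splits \( P \) nontrivially, so that the two complementary sides each carry a nonempty portion of the perfect set of maximal ends. Because \( \Ht(M) \) is supported away from \emph{all} of \( P \), no element of \( \Ht(M) \) can move ends of \( P \) across \( c \); thus an invariant counting how the perfect set (or some countable discrete feature detecting it) is partitioned by \( c \) distinguishes infinitely many \( \Ht(M) \)-orbits within the single \( \Homeo(M) \)-orbit of \( c \). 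This is in the same spirit as the ladder-surface example mentioned in the introduction, where disjoint non-homotopic separating curves lie in distinct \( \Ht \)-orbits.

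The main obstacle is making direction (b) precise: I need to verify that a suitable separating curve exists and that the \( \Ht(M) \)-orbit count is genuinely infinite rather than merely nontrivial. The delicate point is that \( \Ht(M) \) is defined via supports outside a \emph{neighborhood} of the maximally stable ends, and since \( P \) is perfect, such neighborhoods are topologically rich; I must ensure that the counting invariant is insensitive to the choice of neighborhood but still detects the partition of \( P \) induced by \( c \). I expect the cleanest route is to produce a sequence of pairwise-disjoint separating curves in a single \( \Homeo(M) \)-orbit that are pairwise inequivalent under \( \Ht(M) \), directly witnessing infinitely many orbit classes and hence the failure of spaciousness.
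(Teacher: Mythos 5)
Your outline does track the paper's proof, but both directions contain genuine gaps as written. In direction (a), your justification for the density claim fails: neither the Rokhlin property of \( \Homeo(M) \) (Theorem~\ref{thm:import}) nor displaceability of compact sets yields density of \( \Ht(M) \) --- this is a substantive prior result, which the paper imports as \cite[Proposition~4.2]{LanierMapping} (see also \cite[Lemma~11.5]{VlamisHomeomorphism}), not something that follows from "pushing supports off a neighborhood of \( \mu \)." Moreover, the closure of \( \Ht(M) \) is \emph{not} all of \( \Homeo(M) \) in general: a homeomorphism supported in a compact subsurface preserves orientation, so \( \Ht(M) \subseteq \Homeo^+(M) \) when \( M \) is orientable, and density holds only in \( \Homeo^+(M) \) (resp.\ \( \Homeo(M) \) in the non-orientable case). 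This is harmless for spaciousness --- it only multiplies the orbit-class count by a bounded factor --- but your claim as stated is false whenever \( M \) admits an orientation-reversing homeomorphism. You also leave implicit the passage from density to actual orbit equality: the correct step is that the set of homeomorphisms carrying \( a \) onto a curve \emph{homotopic} to \( b \) is open in the compact-open topology, so density produces \( \gamma \in \Ht(M) \) with \( \gamma(a) \) homotopic to \( b \), and a compactly supported ambient isotopy then upgrades homotopy to equality while remaining in \( \Ht(M) \).

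In direction (b), you correctly guess the target configuration --- pairwise-disjoint curves in a single \( \Homeo(M) \)-orbit that are pairwise \( \Ht(M) \)-inequivalent --- but the invariant distinguishing them, which you yourself flag as the main obstacle, is never supplied, and your concern about sensitivity to the choice of neighborhood of the maximal ends is a red herring. The paper's mechanism: for each \( n \), choose pairwise-disjoint separating curves \( a_1, \ldots, a_n \) so that every component of \( M \ssm \bigcup_{i=1}^n a_i \) is a neighborhood of a maximally stable end (possible by perfect self-similarity, via \cite[Corollary~4.6]{VlamisHomeomorphism}); these lie in one \( \Homeo(M) \)-orbit by change of coordinates, yet any homeomorphism taking \( a_i \) onto \( a_j \) for \( i \neq j \) must induce a \emph{nontrivial permutation} of the maximally stable ends. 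Since every element of \( \Ht(M) \) is the identity on a neighborhood of the set of maximally stable ends --- regardless of which neighborhood --- it fixes each such end, so no element of \( \Ht(M) \) carries \( a_i \) to \( a_j \), giving at least \( n \) orbit classes for every \( n \). Note also that your plan built around a single curve \( c \) "splitting \( P \) nontrivially" is weaker than what is needed: one must arrange that the curves in the family induce genuinely different configurations relative to the maximally stable ends, which is exactly what the paper's choice of the \( a_i \) accomplishes and what your sketch omits.
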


\begin{proof}
Let \( M \) be a non-compact self-similar 2-manifold. 
First assume that \( M \) is perfectly self-similar. 
Fix a separating simple closed curve \( a_1 \) on \( M \) such that each component of \( M \ssm a_1 \) is a neighborhood of a maximally stable end of \( M \). 
Given \( n \in \bn \) with \( n > 1 \), there exist separating simple closed curves \( a_2, \ldots, a_n \) such that \( a_1, \ldots, a_n \) are pairwise-disjoint and such that each component of \( M \ssm \bigcup_{i=1}^n a_i \) is a neighborhood of a maximally stable end of \( M \) (this can be deduced from \cite[Corollary~4.6]{VlamisHomeomorphism}). 
The fact that \( a_j \in \Homeo(M) \cdot a_1 \) follows from the change of coordinates principle.
Observe that, for \( i \neq j \), any self-homeomorphism of \( M \) mapping \( a_i \) onto \( a_j \) must induce a nontrivial permutation of the maximally stable ends of \( M \); in particular, such a homeomorphism is not supported outside a neighborhood of the maximally stable ends.
Hence, \( | \Gamma(M) \backslash \Homeo(M)\cdot a_1 | \geq n \), and as \( n \) was arbitrary, \( M \) cannot be spacious. 

Now suppose \( M \) is uniquely self-similar. 
If \( M \) is orientable, let \( H = \Homeo^+(M) \) if \( M \); otherwise, let \( H = \Homeo(M) \).
By \cite[Proposition~4.2]{LanierMapping} (see also \cite[Lemma~11.5]{VlamisHomeomorphism}), \( \Gamma(M) \) is dense in \( H \).
If \( a \) and \( b \) are simple closed curves in \( M \) in the same \( H \)-orbit, the subset of \( H \) consisting of homeomorphisms mapping \( a \) onto a curve homotopic to \( b \) is open.
Hence, by the density of \( \Ht(M) \), there exists \( \gamma \in \Ht(M) \) such that \( \gamma(a) \) is homotopic to \( b \).
There is a compactly supported ambient isotopy from \( \gamma(a) \) to \( b \), and post-composing \( \gamma \) with the final step of the isotopy yields an element of \( \Ht(M) \) mapping \( a \) onto \( b \). 
Therefore, \( M \) is spacious.
\end{proof}

Before the next lemma, we introduce the following theorem from \cite{VlamisHomeomorphism}.

\begin{Thm}[{\cite[Corollary~2.4]{VlamisHomeomorphism}}]
\label{thm:finite-index}
Let \( M \) be a perfectly self-similar 2-manifold.
If \( M \) is orientable (resp., non-orientable), then \( \Homeo^+(M) \) (resp., \( \Homeo(M) \)) has no proper finite-index subgroups. 
\qed
\end{Thm}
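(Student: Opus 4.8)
The plan is to prove the equivalent statement that \( G \)---where \( G = \Homeo^+(M) \) when \( M \) is orientable and \( G = \Homeo(M) \) otherwise---has no nontrivial homomorphism to a finite group. This is equivalent to the stated conclusion: a proper finite-index subgroup contains its normal core, which is a proper finite-index \emph{normal} subgroup and hence yields a nontrivial finite quotient, while conversely the kernel of any nontrivial finite quotient is a proper finite-index subgroup. To rule out finite quotients I would establish a \emph{binate}-type commutator structure: for a suitable class of finitely generated subgroups \( H \leq G \) there should exist \( u \in G \) and a homomorphism \( \psi \co H \to G \) with \( h = [\psi(h), u] \) for every \( h \in H \).

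First I would record why such a structure forces finite quotients to be trivial, as this isolates exactly what the geometric construction must deliver. Suppose \( \rho \co G \to F \) is surjective with \( F \) finite, and choose a finitely generated \( H \leq G \) of the above type with \( \rho(H) = F \). Applying \( \rho \) to \( h = [\psi(h), u] \) shows that every element of \( F \) has the form \( [x, \rho(u)] \), so the self-map \( x \mapsto [x, \rho(u)] \) of the finite set \( F \) is surjective, hence injective. Injectivity forces the centralizer of \( \rho(u) \) to be trivial; since \( \rho(u) \) centralizes itself, \( \rho(u) = e \), whence \( [x, \rho(u)] = e \) for all \( x \) and therefore \( F \) is trivial.

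The geometric content is producing \( u \) and \( \psi \), and this is where perfect self-similarity enters. Because \( M \) is weakly self-similar, every proper compact subset is displaceable, and because the set of maximally stable ends is perfect, the end space admits a Cantor-type splitting into countably many pairwise-disjoint clopen pieces accumulating onto itself. I would use this to produce pairwise-disjoint subsurfaces \( \Sigma_0, \Sigma_1, \Sigma_2, \dots \subset M \), each with compact boundary and escaping to the ends, together with a shift \( u \in G \) satisfying \( u \Sigma_i u^{-1} = \Sigma_{i+1} \). For \( h \) supported in (a conjugate of) \( \Sigma_0 \), I would take its translated copies \( h_i = u^i h u^{-i} \), which commute across distinct indices, and set \( \psi(h) = \prod_{i \geq 0} h_i \); this converges to a homeomorphism since the supports are disjoint and exit every compact set, and a direct computation using \( u h_i u^{-1} = h_{i+1} \) yields \( [\psi(h), u] = h_0 = h \), with \( \psi \) a homomorphism because copies in distinct subsurfaces commute.

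The main obstacle is that this swindle applies cleanly only to homeomorphisms that can be conjugated to be supported in a displaceable subsurface, whereas a general element of \( G \) acts nontrivially on the perfect set of ends and so is \emph{not} displaceable. Overcoming this is exactly where perfect (as opposed to weak or unique) self-similarity is indispensable: the Cantor-type self-similarity of the end space is what allows a homeomorphism's behavior near the ends to be absorbed into one clopen piece and then shifted, so that the commutator structure can be arranged on a generating set large enough to surject onto any given finite quotient. I expect this step to require pairing the construction with automatic continuity for \( G \)---which holds for self-similar \( M \)---so that \( \rho \) may be assumed continuous and its kernel open, thereby reducing the problem to nicely supported generators before the swindle is applied.
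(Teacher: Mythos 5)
First, note that this paper does not prove Theorem~\ref{thm:finite-index} at all: it is imported verbatim from \cite[Corollary~2.4]{VlamisHomeomorphism} (hence the \(\square\) immediately after the statement), so there is no in-paper argument to match yours against. Much of your scaffolding is sound and is in the spirit of how such results are actually proved: the reduction from ``no proper finite-index subgroups'' to ``no nontrivial finite quotients'' via the normal core is correct; the lemma that a binate-type structure kills finite quotients (surjectivity of \( x \mapsto [x,\rho(u)] \) on \( F \), hence injectivity, hence trivial centralizer of \( \rho(u) \), hence \( \rho(u)=e \) and \( F \) trivial) is correct; and the Mather-style swindle \( \psi(h) = \prod_{i\geq 0} u^i h u^{-i} \) with \( h = [\psi(h),u] \) is valid for subgroups supported in an infinitely displaceable subsurface (modulo the minor point that continuity of the infinite product needs the translates to be setwise invariant and to accumulate only on ends, not merely ``disjoint supports exiting every compact set'').

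The genuine gap is the final step, and you have located it but not closed it. For the finite-quotient lemma you must apply the binate structure to a finitely generated \( H \) with \( \rho(H)=F \), where the generators are arbitrary set-theoretic preimages; since \( \ker\rho \) is not known to be open, these are arbitrary homeomorphisms, so the ``suitable class'' must be \emph{all} finitely generated subgroups, which your swindle does not deliver. Your proposed patch---invoke automatic continuity, ``which holds for self-similar \( M \)''---is unsupported: automatic continuity is not established for homeomorphism groups of perfectly self-similar 2-manifolds in any source this paper relies on, and the cited proof in \cite{VlamisHomeomorphism} is purely algebraic--geometric (displacement and commutator identities valid for abstract homomorphisms), precisely so that no continuity hypothesis is needed. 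Moreover, even granting automatic continuity, you would still owe the argument that an open finite-index normal subgroup of \( \Homeo(M) \) must be everything; this is \emph{not} formal and genuinely uses perfect self-similarity. A concrete falsifier for any soft version of this step: for the ladder surface (weakly self-similar with two maximally stable ends), the closure of \( \Ht(M) \) is a proper open finite-index subgroup---indeed this very paper's Lemma~\ref{lem:dense} manufactures such subgroups for spacious two-ended \( M \). So the fragmentation of an end-permuting homeomorphism into swindle-able pieces, which is where the perfectness of the set of maximally stable ends must actually be used (via \( M \cong M \# M \) and Anderson-style absorption of end behavior into half-spaces), is exactly the content your proposal leaves as a black box.
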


\begin{Lem}
\label{lem:ss-case}
Let \( M \) be a self-similar 2-manifold. 
Then, \( \Homeo(M) \)  (resp., \( \mcg(M) \)) has the virtual Rokhlin property if and only if \( M \) is spacious. 
\end{Lem}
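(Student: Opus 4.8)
The plan is to exploit the dichotomy that a self-similar 2-manifold is either uniquely self-similar or perfectly self-similar, and to combine this with the characterization of spaciousness in Lemma~\ref{lem:self-similar_spacious} together with the special status of the 2-sphere. Since the only compact self-similar 2-manifold is \( \mathbb S^2 \) (which is perfectly self-similar but nonetheless spacious), the manifold \( M \) is spacious exactly when \( M \cong \mathbb S^2 \) or \( M \) is uniquely self-similar, and \( M \) fails to be spacious exactly when \( M \) is non-compact and perfectly self-similar. I would prove each direction of the biconditional along these lines, tracking both \( \Homeo(M) \) and \( \mcg(M) \). Throughout, set \( H = \Homeo^+(M) \) if \( M \) is orientable and \( H = \Homeo(M) \) otherwise, so that \( H \) is a clopen subgroup of index at most two in \( \Homeo(M) \).

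For the forward direction (spacious \( \Rightarrow \) virtual Rokhlin property), I would first dispatch the compact case: for \( M \cong \mathbb S^2 \), the mapping class group is trivial and so trivially has the virtual Rokhlin property, while \( \Homeo(\mathbb S^2) \) contains the clopen index-two subgroup \( \Homeo^+(\mathbb S^2) = \Homeo_0(\mathbb S^2) \), which has the Rokhlin property by Glasner--Weiss \cite{GW}. In the uniquely self-similar case \( M \) is non-compact, and Theorem~\ref{thm:import} yields that both \( H \) and \( \mcg(M) \) have the Rokhlin property. Since \( H \) is a clopen subgroup of index at most two in \( \Homeo(M) \), it witnesses the virtual Rokhlin property for \( \Homeo(M) \); and \( \mcg(M) \) has the virtual Rokhlin property because it already has the Rokhlin property.

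For the reverse direction I would argue the contrapositive: assuming \( M \) is not spacious, it is non-compact and perfectly self-similar, and the key input is Theorem~\ref{thm:finite-index}, which says \( H \) has no proper finite-index subgroup. Suppose for contradiction that \( N \) is an open finite-index subgroup of \( \Homeo(M) \) with the Rokhlin property. Then \( N \cap H \) is a finite-index subgroup of \( H \), so \( N \cap H = H \) and hence \( H \le N \); as \( [\Homeo(M):H] \le 2 \), either \( N = H \) or \( N = \Homeo(M) \). The case \( N = H \) is impossible since \( H \) lacks the Rokhlin property by Theorem~\ref{thm:import} (as \( M \) is not uniquely self-similar), and the case \( N = \Homeo(M) \) is impossible because, in the orientable index-two situation, \( H \) is then a proper open normal subgroup of \( \Homeo(M) \), which forbids the Rokhlin property by the observation (used in the proof of Proposition~\ref{prop:trivial}) that the complement of such a subgroup is open and conjugation-invariant. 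For \( \mcg(M) \) I would run the analogous argument by pulling back any open finite-index subgroup of \( \mcg(M) \) along the open continuous quotient \( H \to \mcg(M) \) to a finite-index subgroup of \( H \), forcing it to be all of \( H \) by Theorem~\ref{thm:finite-index}; hence \( \mcg(M) \) has no proper open finite-index subgroup, and it lacks the Rokhlin property by Theorem~\ref{thm:import}.

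The only delicate point — and hence the main obstacle — is the bookkeeping in the orientable case around whether \( M \) admits an orientation-reversing homeomorphism, which determines whether \( H = \Homeo^+(M) \) has index one or two in \( \Homeo(M) \). I would handle both possibilities uniformly by recording that \( H \) is clopen of index at most two and disposing of the residual case \( N = \Homeo(M) \) via the no-proper-open-normal-subgroup observation; when the index is one, the contradiction is immediate since then \( N = H = \Homeo(M) \) would have the Rokhlin property in violation of Theorem~\ref{thm:import}.
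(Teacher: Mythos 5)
Your proposal is correct and takes essentially the same route as the paper's proof: both reduce via Lemma~\ref{lem:self-similar_spacious} to the uniquely/perfectly self-similar dichotomy, treat \( \mathbb{S}^2 \) separately via Glasner--Weiss, invoke Theorem~\ref{thm:import} in the spacious direction, and invoke Theorem~\ref{thm:finite-index} in the non-spacious direction. The only difference is organizational: where the paper rules out the virtual Rokhlin property for \( \mcg(M) \) first and transfers the conclusion to \( \Homeo(M) \) via the open-quotient remark at the end of Section~\ref{sec:preliminaries}, you run the case analysis \( N = H \) versus \( N = \Homeo(M) \) directly inside \( \Homeo(M) \) using the no-proper-open-normal-subgroup observation --- a slightly more explicit but mathematically equivalent variant.
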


\begin{proof}
The homeomorphism group of the 2-sphere has the virtual Rokhlin property \cite{GW} (see also \cite[Theorem~11.4]{VlamisHomeomorphism}), and the 2-sphere is spacious.
We have already noted that the only compact self-similar 2-manifold is the 2-sphere, so for the remainder of the proof, we may assume that \( M \) is a non-compact self-similar 2-manifold.

First assume that \( M \) is not spacious, and hence perfectly self-similar by Lemma~\ref{lem:self-similar_spacious}.
By Theorem~\ref{thm:finite-index}, if \( M \) is perfectly self-similar, then \( \mcg(M) \) has the virtual Rokhlin property if and only if it has the Rokhlin property; but, \( \mcg(M) \) does not have the Rokhlin property (\cite[Lemma~4.8]{LanierMapping}).
Therefore, neither \( \Homeo(M) \) nor \( \mcg(M) \) have the virtual Rokhlin property.

Now suppose that \( M \) is spacious, and hence, by Lemma~\ref{lem:self-similar_spacious}, has a unique maximally stable end. 
By Theorem~\ref{thm:import}, \( \Homeo(M) \) and \( \mcg(M) \) have the virtual Rokhlin property. 
\end{proof}  

In \cite[Theorem~3.1]{LanierMapping}, the authors showed that the existence of a non-displaceable compact subset in an orientable 2-manifold \( M \) is an obstruction to \( \mcg(M) \) having the Rokhlin property whenever \( \mcg(M) \) is infinite.
The proof does not rely on the orientability of \( M \), and with only minor modifications (e.g., replacing elements with powers that lie in a given finite-index subgroup), the proof also yields an obstruction for the virtual Rokhlin property, which we record here:

\begin{Lem}
\label{lem:displaceable}
Let \( M \) be a 2-manifold with infinite mapping class group.
If \( M \) contains a non-displaceable compact subsurface, then neither \( \Homeo(M) \) nor  \( \mcg(M) \) have the virtual Rokhlin property. 
\qed
\end{Lem}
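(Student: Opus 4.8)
The plan is to prove the statement for \( \mcg(M) \) and to deduce the one for \( \Homeo(M) \) for free: by the remark closing Section~\ref{sec:preliminaries}, if \( \Homeo(M) \) had the virtual Rokhlin property then so would \( \mcg(M) \), so it suffices to show that \( \mcg(M) \) does not have it. Suppose for contradiction that it does, and let \( N \) be an open finite-index subgroup with the Rokhlin property. By Proposition~\ref{prop:vr-normal}, \( N \) is normal; being an open (hence closed) finite-index subgroup of the Polish group \( \mcg(M) \), it is itself Polish and carries a dense conjugacy class. The target contradiction is a nonconstant continuous conjugation-invariant function on \( N \): by the observation preceding Corollary~\ref{cor:geometric}, the Rokhlin property of \( N \) forces every such function to be constant.

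\textbf{Producing the invariant.} I would recover this function from the non-displaceable compact subsurface exactly as in \cite[Theorem~3.1]{LanierMapping}. If \( M \) has finite type, then \( \mcg(M) \) is infinite and discrete, and here there is nothing to build: an infinite discrete group has no finite-index subgroup that is a single conjugacy class, so the Rokhlin property---and hence the virtual Rokhlin property---fails outright. If \( M \) has infinite type, then non-displaceability of the subsurface \( \Sigma \) provides a two-sided separating simple closed curve \( c \) whose two complementary sides are topologically distinguishable, so that the signed count of the topological features---handles, crosscaps, or distinguished ends---that a mapping class transports across \( c \) is well defined and independent of the representative. This count is a continuous homomorphism \( \nu\co \mcg(M) \to \bz \), in particular a continuous conjugation-invariant function, and since \( \Sigma \) is non-displaceable it is nontrivial. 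Crucially, the construction manipulates only subsurfaces and homeomorphisms and never uses a global orientation, so it runs verbatim in the non-orientable case, consistently with \cite{EstradaConjugacy, VlamisHomeomorphism}.

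\textbf{Passing to the finite-index subgroup.} The only genuinely new point is that the dense conjugacy class now lives inside the proper subgroup \( N \), so I must certify that \( \nu \) stays nonconstant after restriction to \( N \). This is the ``replace elements by powers lying in the finite-index subgroup'' modification: choosing \( \phi \) with \( \nu(\phi) \neq 0 \) and \( k \) so that \( \phi^k \in N \) (possible as \( [\mcg(M):N] < \infty \)), additivity gives \( \nu(\phi^k) = k\,\nu(\phi) \neq 0 = \nu(\mathrm{id}) \). Since \( \nu \) is conjugation-invariant under all of \( \mcg(M) \), its restriction to \( N \) is a continuous, \( N \)-conjugation-invariant, nonconstant function on \( N \)---the sought contradiction. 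This proves \( \mcg(M) \) lacks the virtual Rokhlin property, and the claim for \( \Homeo(M) \) follows from the reduction in the first paragraph.

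\textbf{Main obstacle.} The delicate step is the construction and nondegeneracy of \( \nu \) for an arbitrary non-displaceable configuration, which is where \cite[Theorem~3.1]{LanierMapping} does the real work: abstractly one must exhibit a canonical side-distinction across \( c \) and check that the resulting count is representative-independent and nontrivial. The point new to the virtual setting---persistence of \( \nu \) on the finite-index subgroup \( N \)---is comparatively soft: inheritance of conjugation-invariance by \( N \) is immediate, and nondegeneracy after passing to \( N \) is exactly what the powering identity \( \nu(\phi^k)=k\,\nu(\phi) \) supplies, which is why arranging the invariant to be a genuine homomorphism, rather than merely conjugation-invariant, is the convenient normalization.
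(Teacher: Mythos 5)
Your framing is fine in places---the reduction to \( \mcg(M) \) via the remark closing Section~\ref{sec:preliminaries}, the fact that a discrete infinite group (hence finite-type \( \mcg(M) \)) cannot have the virtual Rokhlin property, and the use of Proposition~\ref{prop:vr-normal} together with the constancy of continuous conjugation-invariant functions---but the load-bearing step in your second paragraph is a genuine gap. A non-displaceable compact subsurface does \emph{not} yield a continuous nontrivial homomorphism \( \nu \co \mcg(M) \to \bz \), and this is not what \cite[Theorem~3.1]{LanierMapping} asserts or proves. Concretely, let \( M \) be the once-punctured Loch Ness monster surface. It is infinite type with infinite mapping class group, and it contains a non-displaceable compact subsurface: take \( K \) of genus one co-bounded by a curve \( c_0 \) cutting off a once-punctured disk \( D \) around the puncture and a curve \( c_1 \); if \( f(K) \cap K = \varnothing \), then \( f(K) \) lies either in \( D \) (impossible, as \( D \) is planar) or beyond \( c_1 \), in which case the once-punctured disk \( f(D) \) would contain the genus of \( K \), again impossible. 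Yet \( \mcg(M) = \mathrm{PMap}(M) \) here (both ends are topologically distinguished), and by \cite{APV} every homomorphism \( \mathrm{PMap}(M) \to \bz \) is trivial when \( M \) has exactly one end accumulated by genus---so your \( \nu \) cannot exist on a manifold squarely within the lemma's hypotheses, and your discrete escape hatch does not apply. The transport-counting homomorphisms you invoke live in the \emph{opposite} regime: they appear in this paper (Corollary~\ref{cor:cohomology}, Theorem~\ref{thm:characterize}) only for weakly self-similar manifolds with two maximally stable ends, where by definition every proper compact subset is displaceable, and their well-definedness requires a distinguished end on each side of \( c \) absorbing the counted feature---exactly the structure that a non-displaceable configuration lacks.

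What the paper actually does is quite different: the proof of \cite[Theorem~3.1]{LanierMapping} is a direct violation of the joint embedding property, in the style of the proofs of Lemmas~\ref{lem:homeo0_no} and~\ref{lem:homeo0_o} and of Theorem~\ref{thm:2} here. From the non-displaceable subsurface one builds two nonempty open sets \( U \) and \( V \) (roughly: elements fixing a suitable curve system, and elements acting like a high-power pseudo-Anosov on the subsurface) and shows, using that every homeomorphic image of the subsurface meets the subsurface together with subsurface-projection estimates, that \( U \cap V^\sigma = \varnothing \) for every \( \sigma \). The parenthetical ``replacing elements with powers that lie in a given finite-index subgroup'' refers to adapting this JEP argument so that the witnessing elements lie in the open finite-index subgroup \( G \) with the Rokhlin property (one intersects the open sets with \( G \) and raises the relevant homeomorphisms to powers landing in \( G \)); it is not the powering identity \( \nu(\phi^k) = k\nu(\phi) \) for a homomorphism, which, as the example above shows, in general does not exist. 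Your third paragraph is therefore soft only because it rests on an object that the hypotheses do not supply.
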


To finish, we need to consider several sporadic cases: the non-spacious 2-manifolds with finite mapping class groups (the thrice-punctured sphere, the projective plane, the Klein bottle, the open M\"obius band, and the twice-punctured projective plane).
Most of the work is embedded in the following two lemmas, one for non-orientable 2-manifolds and the other for orientable 2-manifolds. 
These two lemmas also establish a substantial part of Theorem~\ref{thm:main3}.

\begin{Lem}
\label{lem:homeo0_no}
If \( M \) is a finite-type non-orientable 2-manifold, then \( \Homeo_0(M) \) does not have the Rokhlin property. 
\end{Lem}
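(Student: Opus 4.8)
The plan is to use the geometric consequence of the Rokhlin property recorded in the discussion preceding Corollary~\ref{cor:geometric}: if \( \Homeo_0(M) \) had a dense conjugacy class, then every continuous conjugation-invariant function from \( \Homeo_0(M) \) to a Hausdorff space would be constant. So it suffices to produce, for each finite-type non-orientable \( M \), a \emph{non-constant} continuous conjugation-invariant function on \( \Homeo_0(M) \). Since all elements of \( \Homeo_0(M) \) are isotopic to the identity, every invariant coming from the mapping class group (or from any coarse or homotopical data) vanishes; the function must instead detect the \emph{fine} dynamics of the actual homeomorphism, which is what distinguishes this from the \( \mcg \)-level obstruction of Lemma~\ref{lem:displaceable}.

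The uniform feature I would exploit is a one-sided simple closed curve. Every non-orientable surface contains a one-sided simple closed curve \( c \); its mod-\( 2 \) self-intersection equals \( 1 \), so for every \( f\in\Homeo_0(M) \) the curve \( f(c) \) is isotopic to \( c \) and therefore meets it, \( f(c)\cap c\neq\varnothing \). This is precisely the property that fails for the sphere, plane, and open annulus, whose essential curves are all displaceable within \( \Homeo_0 \). Passing to the fine curve graph \( \mathcal C^\dagger(M) \)---on which \( \Homeo_0(M) \) acts by isometries through a continuous action---non-displaceability gives \( d(c,f(c))\geq 2 \) for the vertex \( c \), and the asymptotic translation length \( \ell(f)=\lim_n d(c,f^n(c))/n \) is then a continuous, conjugation-invariant function on \( \Homeo_0(M) \) (continuity following the lemmas behind \cite[Theorem~3.4]{BowdenRotation}). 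The task reduces to exhibiting a single \( f\in\Homeo_0(M) \) that is loxodromic, i.e.\ with \( \ell(f)>0 \).

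To produce such an \( f \), and to organize the argument, I would split on the orientation double cover \( \pi\colon\tilde M\to M \) with deck involution \( \iota \), using the canonical lift that sends \( f\in\Homeo_0(M) \) to the endpoint of the lifted identity-isotopy; this is a continuous homomorphism \( L\colon\Homeo_0(M)\to\Homeo_0(\tilde M) \) whose image centralizes \( \iota \) (well-definedness uses that the two lifts differ by the fixed-point-free \( \iota \)).
\begin{enumerate}[(a)]
\item When \( \tilde M \) is not homeomorphic to the sphere, plane, or open annulus, \( \Homeo_0(\tilde M) \) already carries a non-constant continuous conjugation-invariant function \( \phi \) (for instance a fine-curve-graph translation length when \( \tilde M \) has positive genus, via \cite{BowdenRotation}); I would pull it back by \( L \), noting \( \phi\circ L \) is continuous and conjugation-invariant and arranging \( \iota \)-equivariant loxodromics so that it is non-constant. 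This handles, e.g., the Klein bottle (whose double cover is the torus) and all higher-complexity cases.
\item The remaining surfaces are exactly those whose double cover \emph{is} the sphere or open annulus, namely the projective plane \( \mathbb{RP}^2 \) and the open M\"obius band. These must be treated directly.
\end{enumerate}

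The main obstacle is case (b). For \( \mathbb{RP}^2 \) and the open M\"obius band the pulled-back and translation-length invariants vanish---indeed these groups are plausibly coarsely bounded, since, e.g., \( H_1(\mathbb{RP}^2;\br)=0 \) leaves no room for a rotation vector---so the obstruction cannot be \( \br \)-valued. Here I would aim instead for a \emph{bounded} but non-constant invariant: a rotation number \( \rho\colon\Homeo_0(M)\to\br/\bz \) extracted from the \( \iota \)-equivariant dynamics around the non-displaceable one-sided curve (its lift being an \( \iota \)-invariant circle on \( \mathbb S^2 \) or the annulus). The delicate point, and the crux of the whole lemma, is to show that this rotation number does not collapse the way it does on the open annulus, where shearing to infinity destroys any rotation number; the one-sidedness of \( c \)---which forces \( c \) to stay linked with its image rather than escape to an end---is exactly the feature I would use to keep \( \rho \) well defined, continuous, and non-constant. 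Failing a clean rotation number, the fallback is to construct by hand two conjugation-invariant open subsets of \( \Homeo_0(M) \) that cannot be joined, contradicting the joint embedding property.
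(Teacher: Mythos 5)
Your overall strategy---obstructing the Rokhlin property with a non-constant continuous conjugation-invariant function---is sound in principle (it is exactly the observation preceding Corollary~\ref{cor:geometric}), but as a proof of the lemma it has genuine gaps, both where you acknowledge them and where you do not. First, the fine-curve-graph invariant you invoke from \cite{BowdenRotation} is established there for \emph{compact} surfaces of genus at least one, so among orientation double covers your case (a) is only covered for \emph{closed} non-orientable \( M \) (whose covers are closed orientable of genus \( \geq 1 \)); every punctured non-orientable surface---e.g.\ the twice-punctured projective plane, whose double cover is the four-punctured sphere, or punctured Klein bottles, whose covers are punctured tori---falls outside the cited technology, and your hedge ``positive genus'' does not repair this. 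Second, even where \( \phi \) exists you never actually produce an \( f \in \Homeo_0(M) \) with \( \phi(L(f)) \neq \phi(\mathrm{id}) \): ``arranging \( \iota \)-equivariant loxodromics'' is a nontrivial claim (for the Klein bottle, equivariance forces the torus rotation set to be symmetric under \( (a,b)\mapsto(a,-b) \), and you would have to realize a loxodromicity criterion within the image of \( L \)), and well-definedness of \( L \) itself requires checking that loops in \( \Homeo_0(M) \) lift to loops, which you assert rather than verify. Third, and most seriously, case (b) is left open by your own admission: the \( \br/\bz \)-valued rotation number on \( \Homeo_0(\mathbb{RP}^2) \) or the open M\"obius band is never constructed (a general element of \( \Homeo_0 \) preserves no circle, so it is unclear what it would be the rotation number \emph{of}), and your fallback---two conjugation-invariant open sets witnessing failure of the JEP---is precisely the content a proof must supply.

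For contrast, the paper's proof is elementary and uniform over all finite-type non-orientable \( M \), and it is exactly your ``fallback'' executed with homology instead of dynamics. Non-orientability supplies elements of \( \Homeo_0(M) \) reversing the orientation of an embedded disk, so one takes \( U \) to consist of maps sending three disjoint closed disks into their interiors, reversing orientation on \( D_1 \) and preserving it on \( D_2, D_3 \), and \( V \) to consist of maps fixing, with orientation and within small disjoint annuli, a generating system of curves for \( H_1(M;\bz) \) taken in four parallel copies (so that the annuli can be chosen to miss three given points). Brouwer yields fixed points \( x_1,x_2,x_3 \) in the disks; puncturing at them, any \( f \in U \cap V \) would act trivially on \( H_1(M \ssm \{x_1,x_2,x_3\};\bz) \) yet negate \( [\partial D_1] \), a contradiction, and since the curve system can be transported by any \( \sigma \), one gets \( U \cap V^\sigma = \varnothing \) for all \( \sigma \), so the JEP fails. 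If you want to salvage your approach, the honest statement is that it proves the lemma only for closed non-orientable surfaces, contingent on the loxodromic-lift step; the punctured and low-complexity cases still need an argument of the paper's kind.
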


\begin{proof}
Fix three pairwise-disjoint embedded closed disks \( D_1 \), \( D_2 \), and \( D_3 \) in \( M \), and let \( D_k^\mathrm{o} \) denote the interior of \( D_k \). 
Fix an orientation on each of the \( D_k \), and let \( U^+(D_k, D_k^\mathrm{o}) \) (resp., \( U^- (D_k, D_k^\mathrm{o}) \)) denote the subset of \( U(D_k,D_k^\mathrm{o}) \) that preserves (resp., reverses) the orientation of \( D_k \).
Observe that both \( U^+(D_k, D_k^\mathrm{o}) \) and \( U^-(D_k, D_k^\mathrm{o}) \) are open subsets of \( \Homeo(M) \) and their definition does not depend on the choice of orientation on \( D_k \). 

For each \( i \in \{1,2,3,4\} \), fix oriented simple closed curves \( c_{i,1}, \ldots, c_{i,m} \) on \( M \) such that their homology classes generate \( H_1(M; \bz) \)  and such that \( c_{i,j} \) and \( c_{k,j} \) are disjoint and isotopic for \( i \neq k \). 
For each \( c_{i,j} \), fix an open regular neighborhood \( A_{i,j} \) of \( c_{i,j} \) such that \( A_{k,l} \cap A_{i,j} = \varnothing \) whenever \( k \neq i \) or \( l \neq j \). 
Similar to the above, let \( U^+(c_{i,j}, A_{i,j}) \) denote the (open) subset of \( U(c_{i,j}, A_{i,j}) \) consisting of elements that preserve the orientation of \( c_{i,j} \).

Let
\[ 
U = \Homeo_0(M)  \cap U^-(D_1, D_1^\mathrm{o}) \cap U^+(D_2, D_2^\mathrm{o}) \cap U^+(D_3, D_3^\mathrm{o}) 
\]
and
\[
V = \Homeo_0(M) \cap \bigcap_{i=1}^4\bigcap_{j=1}^m U^+(c_{i,j}, A_{i,j} ) .
\]
In words, \( U \) is the open subset of \( \Homeo_0(M) \) consisting of homeomorphisms that map \( D_k \) into its interior such that the images of \( D_2 \) and \( D_3 \) have the same orientation as \( D_2 \) and \( D_3 \), respectively, while the image of \( D_1 \) has the opposite orientation as \( D_1 \). 
On the other hand, \( V \) consists of the homeomorphisms of \( \Homeo_0(M) \) that send each \( c_{i,j} \) to a curve that is isotopic to itself within \( A_{i,j} \) (and has the same orientation). 

We claim that \( U \cap V = \varnothing \).
Suppose not and fix \( f \in U \cap V \).
By the Brouwer fixed point theorem, there exists \( x_k \in D_k^\mathrm{o} \) such that \( f(x_k) = x_k \) for \( k \in \{1,2,3\} \).
For each \( k \in\{1,2,3\} \) and for each \( j \in\{1, \ldots, m\} \), there exists \( i \in\{1,2,3,4\} \) such that \( x_k \not\in A_{i,j} \).
Up to relabeling, we may assume that \( A_{1,j} \) is disjoint from \( \{x_1, x_2, x_3 \} \) for all \( j \in \{1, \ldots, m\} \). 

Let \( M' = M \ssm \{x_1,x_2,x_3\} \), and view \( f \in \Homeo(M') \). 
Let \( d_2 = \partial D_2 \) and \( d_3 = \partial D_3 \). 
Observe that the homology classes of \( c_{1,1}, \ldots, c_{1,m}, d_2, d_3 \) generate \( H_1(M', \bz) \).
By assumption, \( f \) fixes the homology classes of each of these curves and hence acts trivially on \( H_1(M', \bz) \).
However, this contradicts the fact that \( f \) must negate the homology class of \( \partial D_1 \); hence, \( U \cap V = \varnothing \). 

To finish, given \( \sigma \in \Homeo_0(M) \), the above argument can be applied with \( c_{i,j} \) replaced by \( \sigma(c_{i,j}) \) and \( A_{i,j} \) replaced by \( \sigma(A_{i,j}) \) to see that \( U \cap V^\sigma = \varnothing \). 
Therefore, \( \Homeo_0(M) \) fails to have the JEP, and hence fails to have the Rokhlin property.
\end{proof}

\begin{Lem}
\label{lem:homeo0_o}
Let \( M \)  be a finite-type orientable 2-manifold that is not homeomorphic to the 2-sphere, plane, or open annulus. 
Then, \( \Homeo_0(M) \) does not have the Rokhlin property.
\end{Lem}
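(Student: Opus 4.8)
The plan is to argue by contradiction using the principle recorded at the start of the discussion of geometric consequences: if a topological group has the Rokhlin property and \( \Phi \) is a continuous conjugation-invariant function to \( \br \), then \( \Phi \) is constant (a dense conjugacy class forces \( \Phi \) to take a single value). Since \( \Homeo_0(M) \) is a topological group, it therefore suffices, for each \( M \) as in the statement, to produce a continuous conjugation-invariant function \( \tau\co \Homeo_0(M) \to \br \) with \( \tau(\mathrm{id}) = 0 \) that is not identically zero. The surfaces in question are precisely those of genus at least one (with any number of ends) together with the spheres with at least three ends; each of these admits an essential simple closed curve, with the single exception of the thrice-punctured sphere.

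For the main case, suppose \( M \) admits an essential simple closed curve. I would take \( \tau \) to be the asymptotic translation length of the action of \( \Homeo_0(M) \) on the fine curve graph of \( M \). As recalled in the justification of Corollary~\ref{cor:geometric}(3), a continuous isometric action of a group on a Gromov-hyperbolic space yields a continuous, conjugation-invariant asymptotic translation length function vanishing at the identity, and by Bowden--Hensel--Mann--Militon--Webb \cite{BowdenRotation} the fine curve graph is Gromov-hyperbolic and carries a continuous isometric \( \Homeo_0(M) \)-action. It then remains only to exhibit one \( f \in \Homeo_0(M) \) with \( \tau(f) > 0 \). When \( M \) is compact of genus at least one this is exactly \cite[Theorem~1.2]{BowdenRotation}. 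For the remaining (necessarily non-compact) surfaces I would choose a compact essential subsurface \( S \subset M \)---a once-holed torus when \( M \) has genus, and a four-holed sphere when \( M \) is planar with at least four ends---so that \( S \) carries a curve essential in \( M \), and support \( f \) inside \( S \): taking \( f \) to be the time-one map of an isotopy that is nontrivial at the level of the fine curve graph produces an element isotopic to the identity in \( M \) that nonetheless moves an essential curve to arbitrarily distant curves under iteration, exactly as in the positive-translation-length constructions of \cite{BowdenRotation}. The one subtlety to check here is that the relevant hyperbolicity and continuity statements, proved in \cite{BowdenRotation} for closed surfaces, persist for finite-type surfaces; alternatively one transfers the estimate from the fine curve graph of the capped-off subsurface \( S \) via the inclusion.

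The remaining case, the thrice-punctured sphere \( P \), is the one I expect to be the main obstacle, since \( P \) has no essential simple closed curve and the fine curve graph is therefore unavailable. Here I would replace it by a fine arc graph built from properly embedded arcs joining the punctures of \( P \). Any element of \( \Homeo_0(P) \), being isotopic to the identity relative to the ends, fixes every isotopy class of such arc; yet it can translate actual arcs arbitrarily far in the fine arc graph by spiralling within a collar of a puncture, so the asymptotic translation length on this graph again furnishes a non-constant continuous conjugation-invariant function. The crux is establishing Gromov-hyperbolicity of the fine arc graph and continuity of the \( \Homeo_0(P) \)-action in this degenerate low-complexity setting; a prime-end rotation number at a puncture would be a more elementary candidate invariant, but arranging it to be simultaneously \( \br \)-valued, continuous, and conjugation-invariant is itself delicate, so the fine arc graph seems the more reliable route.
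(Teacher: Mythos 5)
There is a genuine gap, and in fact the central construction in your main case provably fails. You propose to support \( f \) inside a compact subsurface \( S \subset M \) (a once-holed torus or four-holed sphere) and conclude that \( f \) has positive asymptotic translation length on the fine curve graph of \( M \). But any homeomorphism supported in a compact subsurface whose boundary is essential in \( M \) is elliptic on the fine curve graph: let \( d \) be a parallel copy of a component of \( \partial S \) lying just outside \( S \), and let \( c \subset S \) be an essential curve; since \( f^n(c) \subset S \) for all \( n \), both \( c \) and \( f^n(c) \) are disjoint from \( d \), so the distance from \( c \) to \( f^n(c) \) is at most \( 2 \) for every \( n \), and the asymptotic translation length of \( f \) is \( 0 \). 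This covers all the non-compact surfaces under consideration except the minimal ones (once-punctured torus, four-punctured sphere), where \( \partial S \) is peripheral and nothing is established instead. The same obstruction shows that the proposed ``transfer from the capped-off subsurface'' cannot work: \( f \) may indeed act hyperbolically on the fine curve graph of the capped surface \( \widehat S \) (this is essentially how Bowden--Hensel--Mann--Militon--Webb realize point-pushing pseudo-Anosovs), but in \( M \) the capping disk reopens into topology that supplies the bounded-distance witness \( d \); positive translation length in \cite{BowdenRotation} always comes from globally filling dynamics, never from maps supported in a proper essential subsurface. Two further gaps you flag but do not close: for punctured surfaces one must exclude peripheral curves (otherwise every essential curve misses a small peripheral circle and the graph has diameter at most \( 3 \)), and neither hyperbolicity nor continuity of the action is available in the cited reference beyond the closed case; and the thrice-punctured sphere case---hyperbolicity of your fine arc graph, continuity, and an element of positive translation length---is left entirely speculative.

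For comparison, the paper's proof avoids the fine curve graph altogether and instead refutes the joint embedding property directly, which handles all cases uniformly, including the thrice-punctured sphere. It chooses \( f \in \Homeo_0(M) \) fixing a point \( p \) whose class in \( \mcg(M_p) \) is a high power of a point-pushing pseudo-Anosov (the same dynamical seed as \cite{BowdenRotation}, but used differently), defines \( U \) via an Alexander system for \( M_p \) so that every element of \( U \) fixing \( p \) is isotopic to \( f \) rel \( p \), and defines \( V \) so that its elements have (by Brouwer) a fixed point \( q \) in a disk \( D \) while keeping curves---or, for the thrice-punctured sphere, properly embedded arcs---\( b_1, b_2 \) inside product neighborhoods. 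An element \( g \in U \cap V^\sigma \) would then be isotopic rel \( q \) to a conjugate of \( f \), forcing \( g(b_1) \) to intersect \( b_1 \) essentially, while membership in \( V^\sigma \) forces \( g(b_1) \) to be isotopic off \( b_1 \), a contradiction; hence the JEP, and so the Rokhlin property, fails. The paper explicitly notes that the \cite{BowdenRotation} translation-length argument recovers the result only for compact \( M \) of genus at least one, which is precisely the one case where your proposal is complete.
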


\begin{proof}
By the classification of surfaces and the hypotheses on \(M\), either \(M\) has essential simple closed curves or else it is homeomorphic to the thrice-punctured sphere. 
Fix an embedded closed disk \( D \) in \( M \), fix a point \( p \in D^\mathrm{o} \), and let \( M_p = M \ssm \{p\} \). 
Let \( f \in \Homeo_0(M) \) be such that \( f(p) = p \) and such that given any simple closed curve (resp., simple proper arc) \( c \) in \( M_p \), every simple closed curve (resp., simple proper arc) isotopic to \( f(c) \) in \( M_p \) intersects \( c \) nontrivially.
This can be accomplished by taking a high power of a homeomorphism in \( \Homeo_0(M) \) fixing \( p \) that is pseudo-Anosov as a homeomorphism of \( M_p \) and whose isotopy class is in the kernel of the forgetful homomorphism \( \mcg(M_p) \to \mcg(M) \). 

Choose an Alexander system \( \{c_1, \ldots, c_n\} \) for \( M_p \) and closed annular neighborhoods \( A_1, \ldots, A_n \) of \( c_1, \ldots, c_n \), respectively, such that \( A_i \) is disjoint from \( D \) for each \( j \in \{1, \ldots, n \} \),  and let 
\[
U = \bigcap_{i=1}^n U(c_i, f(A_i^\mathrm{o})). 
\]  
Note that, on account of the \( c_i \) being an Alexander system, every element of \( U \) fixing \( p \) is isotopic to \( f \) as a homeomorphism of \( M_p \).
If \( M \) is homeomorphic to a thrice-punctured sphere, let \( b_1 \) and \( b_2 \) be disjoint  properly embedded arcs in \( M \); otherwise, let \( b_1 \) and \( b_2 \) be disjoint essential simple closed curves in \( M \) (note:~they can be isotopic).
In the latter case, let \( B_1 \) and \( B_2 \) be disjoint closed annular neighborhoods of \( b_1 \) and \( b_2 \), respectively, and in the former case, let \( B_1 \) and \( B_2 \) be disjoint closed neighborhoods of \( b_1 \) and \( b_2 \), respectively, such that \( B_j \) is homeomorphic to \( b_j \times [-1,1] \). 
Let \( V = U(D,D^\mathrm{o}) \cap U(b_1, B_1^\mathrm{o}) \cap U(b_2, B_2^\mathrm{o}) \). 

We claim that \( U \cap V  = \varnothing \): Suppose not and let \( g \in U \cap V \). 
As \( g \in   V \) and hence in \( U(D, D^\mathrm o) \), there exists some \( q \in D^\mathrm{o} \) such that \( g(q) = q \). 
At least one of \( B_1 \) and \( B_2 \) is contained in \( M \ssm\{q\} \); let us assume it is \( B_1 \). 
Fix a homeomorphism \( \vp \co M \to M \) supported in \( D \) such that \( \vp(p) = q \). 
Then, we can view \( \vp \) as a homeomorphism from \( M_p \) to  \( M \ssm \{q\} \).
Observe that \( \vp\circ f \circ \vp^{-1}\) is isotopic to \( g \) in \( M \ssm\{q\} \), as the two homeomorphisms agree outside of  the disk \( D \) and every homeomorphism of \( D \) fixing \( q \) is isotopic to the identity via an isotopy fixing \( q \) at each stage.
In particular, by our choice of \( f \), we have \( g(b_1) \) cannot be isotoped off of \( b_1 \) in \( M_q \). 
However, as \( g \in V \), \( g(b_1) \) must be isotopic to a component of \( \partial B_1 \), each of which is disjoint from \( b_1 \), a contradiction. 
To finish, using the fact that \( b_i \) and \( B_i \) were arbitrary, the argument above establishes the fact that \( U \cap V^\sigma = \varnothing \) for every \( \sigma \in \Homeo_0(M) \), and hence \( \Homeo_0(M) \) does not have the JEP.
Therefore, \( \Homeo_0(M) \) does not have the Rokhlin property. 
\end{proof}

For a compact orientable 2-manifold \( M \) of genus at least one, Lemma~\ref{lem:homeo0_o} can be deduced using the work of Bowden--Hensel--Mann--Militon--Webb \cite{BowdenRotation} in which they show that the asymptotic translation length of the action of \( H_0(M) \) on the fine curve graph of \( M \) yields a continuous (conjugation-invariant) surjection onto the non-negative reals. 
Such a function cannot exist for a topological group with the Rokhlin property.

To finish the goal of this subsection, we require Theorem~\ref{thm:main3}, but in order to provide a proof, we need to introduce a lemma. 

\begin{Lem}
\label{lem:no-normal}
If \( M \) is a finite-type 2-manifold, then \( \Homeo_0(M) \) has no proper finite-index normal subgroups.
\end{Lem}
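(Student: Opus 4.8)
The plan is to show that \( \Homeo_0(M) \) admits no nontrivial finite quotient, which is equivalent to the assertion of the lemma, since a proper finite-index normal subgroup \( N \) is precisely the kernel of a surjection onto the nontrivial finite group \( Q = \Homeo_0(M)/N \). Set \( k = |Q| = [\Homeo_0(M):N] \). The key elementary observation is that for every \( g \in \Homeo_0(M) \) the image \( gN \in Q \) has order dividing \( k \), so \( g^k \in N \); that is, \( N \) contains every \( k \)-th power. Consequently it suffices to exhibit a generating set of \( \Homeo_0(M) \) each of whose members is a \( k \)-th power of an element of \( \Homeo_0(M) \): any such generator then lies in \( N \), forcing \( N = \Homeo_0(M) \) and contradicting properness. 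Thus the whole problem reduces to producing a generating set by ``\( k \)-th powers'' for every \( k \).

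First I would dispose of the compact case. When \( M \) is closed, \( \Homeo_0(M) \) is an infinite simple group by the classical simplicity theorem for identity components of homeomorphism groups of closed manifolds; a proper finite-index normal subgroup would then have to be trivial, making the whole group finite, a contradiction. (Alternatively one can run the power argument below, since a small disk on a closed surface is displaceable with infinitely many shrinking disjoint translates.) For the remaining non-compact finite-type manifolds I would use that every end is planar, so \( M \) contains a collar neighborhood \( V \cong \mathbb S^1 \times [0,\infty) \) of an end together with, away from \( V \), a precompact exhausting piece. A standard two-set fragmentation along the open cover consisting of a precompact open set and \( V \) writes any element of \( \Homeo_0(M) \) as a finite product of homeomorphisms each supported either in a precompact (hence displaceable) subsurface or in the collar \( V \).

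It then remains to see that each such building block is a \( k \)-th power for every \( k \). For a homeomorphism \( f \) supported in a displaceable subsurface, the noncompactness of \( M \) provides infinitely many pairwise-disjoint copies of its support marching out toward the end, and an infinite-swindle construction upgrades this to an expression of \( f \) as a \( k \)-th power; for a homeomorphism supported in the collar \( V \cong \mathbb S^1 \times [0,\infty) \), the product structure and the infinite radial direction let one spread \( f \) across \( k \) nested collars and extract a \( k \)-th root. I expect this last point—manufacturing honest \( k \)-th roots (rather than mere commutators, which only yield perfectness and do not obstruct nonabelian finite quotients) for the collar-supported and displaceable blocks—to be the main obstacle, since the swindle most naturally produces commutators and genuine care is needed to arrange a root. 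An appealing alternative that sidesteps the root constructions entirely is to invoke automatic continuity for \( \Homeo_0(M) \): the quotient map onto the finite discrete group \( Q \) would then be continuous, so \( N \) would be open, and openness together with the path-connectedness of the identity component would immediately give \( N = \Homeo_0(M) \).
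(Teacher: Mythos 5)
Your opening reduction is correct (a finite-index normal subgroup \( N \) of index \( k \) contains every \( k \)-th power, so it suffices to generate \( \Homeo_0(M) \) by \( k \)-th powers), and your closed case coincides with the paper's, which simply cites the simplicity of \( \Homeo_0(M) \) for closed \( M \) \cite{FisherGroup}. For non-compact finite-type \( M \), however, the paper invokes McDuff's computation of the entire lattice of normal subgroups of \( \Homeo_0(M) \) of an open manifold \cite[Theorem~1.1]{McDuffLattice}, whereas you attempt a from-scratch argument, and this is where there is a genuine gap --- precisely at the step you flag and then wave through. A homeomorphism \( h \) supported in a collar \( V \cong \mathbb S^1 \times [0,\infty) \) need not be compactly supported, and its germ at the end can be arbitrary. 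The infinite swindle has no purchase on such elements: the swindle requires infinitely many pairwise-disjoint translates of \( \supp(h) \), which cannot exist once \( \supp(h) \) contains a neighborhood of an end (any two neighborhoods of the same end intersect, and \( M \) has only finitely many ends), and the would-be infinite product \( \prod_{n \geq 0} \varphi^n h \varphi^{-n} \) of radially shifted conjugates fails to converge because all factors overlap near the end when \( \supp(h) \) is unbounded in the radial direction. Likewise, ``spreading \( h \) across \( k \) nested collars to extract a \( k \)-th root'' presupposes that the germ of \( h \) at infinity is a \( k \)-th power (or product of such) in the germ group \( H_V / H_V^{c} \), where \( H_V \) is the group of collar-supported homeomorphisms and \( H_V^c \) its compactly supported part --- but determining the normal subgroup structure of exactly this germ-at-infinity group is the nontrivial content of McDuff's theorem. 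Special elements do have visible roots (e.g., a damped \( 1/k \)-rotation of the collar is a \( k \)-th root of the full twist), but your sketch supplies no construction for a general end germ.

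Two further points. First, the displaceable-support step is also unjustified as stated --- the swindle yields commutators, which, as you note, only kill abelian quotients --- but this part is repairable without new ideas: if \( G_D \) denotes the group of homeomorphisms supported in an open disk \( D \subset M \), then \( G_D \) is an infinite simple group by the classical Anderson--Fisher theorem, \( N \cap G_D \) is a finite-index subgroup of it, and an infinite simple group has no proper finite-index subgroups (pass to the normal core), so \( N \supseteq G_D \); fragmentation then disposes of all compactly supported elements. (Your two-set fragmentation itself needs care, since the compact-open topology gives no control at infinity; one must first correct \( f \) on a compact core and verify the error is supported in the collars --- this is fixable.) Second, the automatic-continuity fallback does not sidestep the hard case: the citable theorems (Rosendal, Mann) give automatic continuity of \( \Homeo_0(M) \) for \emph{compact} manifolds and of compactly supported homeomorphism groups; automatic continuity of \( \Homeo_0(M) \) for a non-compact finite-type surface --- already for the plane --- is not an available off-the-shelf result. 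So the fallback covers precisely the case that simplicity already handles, while the collar/germ difficulty remains. (Where automatic continuity does apply, your conclusion is correct: an open subgroup of the connected group \( \Homeo_0(M) \) is the whole group.)
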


\begin{proof}
If \( M \) is closed, then \( \Homeo_0(M) \) is a simple group (\cite{FisherGroup}, see also \cite[Theorem~6.11]{VlamisHomeomorphism}); otherwise, the structure of the lattice of normal subgroups of \( \Homeo_0(M) \) is given in \cite[Theorem~1.1]{McDuffLattice}, and in particular, there are no proper finite-index normal subgroups.
\end{proof}

\begin{proof}[Proof of Theorem~\ref{thm:main3}]
Let \( M \) be a finite-type 2-manifold. 
By Lemma~\ref{lem:no-normal}, \( \Homeo_0(M) \) has the virtual Rokhlin property if and only if it has the Rokhlin property. 
If \( M \) is is homeomorphic to neither the 2-sphere, the plane, nor the annulus, then by Lemma~\ref{lem:homeo0_no} and Lemma~\ref{lem:homeo0_o}, \( \Homeo_0(M) \) does not have the Rokhlin property.
To finish, we note that both \( \Homeo_0(\mathbb S^2) \) and \( \Homeo_0(\mathbb R^2) \)  have the Rokhlin property (see \cite[Theorems~11.4~\&~11.6]{VlamisHomeomorphism}), and by Theorem~\ref{thm:2} in the next section, \( \Homeo_0(\mathbb S^1 \times \br) \) has the Rokhlin property. 
\end{proof}

The next lemma, together with Lemma~\ref{lem:ss-case}, accomplishes the goal laid out in the beginning of the section, allowing us to focus for the rest of the article on weakly self-similar 2-manifolds with exactly two maximally stable ends.  

\begin{Lem}
\label{lem:cd-case}
If \( M \) is a 2-manifold containing a non-displaceable proper compact subset, then \( \Homeo(M) \) does not have the virtual Rokhlin property. 
\end{Lem}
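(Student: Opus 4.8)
The plan is to split on the cardinality of \( \mcg(M) \) and reduce each case to a result already established. The hypothesis provides a non-displaceable proper compact \emph{subset} \( K \), whereas the available tool, Lemma~\ref{lem:displaceable}, is phrased for \emph{subsurfaces}, so my first move is to upgrade \( K \) to a non-displaceable proper compact subsurface. Since \( K \) is proper, I fix a point \( p \in M \ssm K \) and a closed disk \( D \) with \( p \in D^{\mathrm o} \) and \( D \cap K = \varnothing \). As \( K \) is compact and contained in the open set \( M \ssm D \), I can engulf it in a compact subsurface \( \Sigma \) with \( K \subseteq \Sigma^{\mathrm o} \subseteq \Sigma \subseteq M \ssm D^{\mathrm o} \) (take a regular neighborhood of a finite cover of \( K \) by disks inside \( M \ssm D \)). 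This \( \Sigma \) is a proper compact subsurface, and it is non-displaceable: any \( f \) with \( f(\Sigma) \cap \Sigma = \varnothing \) would satisfy \( f(K) \cap K = \varnothing \), contradicting the choice of \( K \). If \( \mcg(M) \) is infinite, the conclusion is now immediate from Lemma~\ref{lem:displaceable} applied to \( \Sigma \).

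The substantive case is \( \mcg(M) \) finite. Here I would first note that a finite mapping class group forces \( M \) to be of finite type (infinite-type 2-manifolds have uncountable mapping class groups), so that Lemma~\ref{lem:no-normal} and Theorem~\ref{thm:main3} apply. Moreover, the existence of the non-displaceable \( \Sigma \) means \( M \) is not weakly self-similar, and in particular \( M \) is none of the 2-sphere, the plane, or the open annulus; hence by Theorem~\ref{thm:main3}, \( \Homeo_0(M) \) does \emph{not} have the Rokhlin property. Since \( \mcg(M) \) is finite, \( \Homeo_0(M) \) is an open, finite-index, normal subgroup of \( \Homeo(M) \).

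I then argue by contradiction. Suppose \( \Homeo(M) \) has the virtual Rokhlin property, witnessed by an open finite-index (hence closed) subgroup \( N \) with the Rokhlin property. By Proposition~\ref{prop:vr-normal}, \( N \) is normal in \( \Homeo(M) \) and has no proper open normal subgroup. The intersection \( \Homeo_0(M) \cap N \) is normal in \( \Homeo(M) \), and of finite index in \( \Homeo_0(M) \), so Lemma~\ref{lem:no-normal} forces \( \Homeo_0(M) \cap N = \Homeo_0(M) \), i.e. \( \Homeo_0(M) \subseteq N \). Then \( \Homeo_0(M) \) is an open normal subgroup of \( N \), so by Proposition~\ref{prop:vr-normal}(i) it must equal \( N \). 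Thus \( \Homeo_0(M) = N \) would have the Rokhlin property, contradicting the previous paragraph. Therefore \( \Homeo(M) \) does not have the virtual Rokhlin property.

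I expect the main obstacle to lie in the finite mapping class group case: the reduction to \( \Homeo_0(M) \) hinges on correctly chaining the normality facts—first Lemma~\ref{lem:no-normal} to obtain \( \Homeo_0(M) \subseteq N \), then Proposition~\ref{prop:vr-normal}(i) to collapse \( N \) onto \( \Homeo_0(M) \)—and on verifying that \( \Homeo_0(M) \) is open, finite-index, and normal so that these inputs are applicable. By contrast, the subset-to-subsurface upgrade is routine, and the infinite mapping class group branch is a direct application of Lemma~\ref{lem:displaceable}.
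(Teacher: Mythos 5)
Your proof is correct and follows essentially the same route as the paper: the infinite mapping class group case is dispatched by Lemma~\ref{lem:displaceable}, and the finite case reduces to \( \Homeo_0(M) \) (open, finite-index, normal since \( M \) is finite type) and concludes via Theorem~\ref{thm:main3}. Your explicit chain \( \Homeo_0(M) \subseteq N \) via Lemma~\ref{lem:no-normal} followed by \( N = \Homeo_0(M) \) via Proposition~\ref{prop:vr-normal}(i) simply spells out what the paper compresses into its citation of Proposition~\ref{prop:vr-normal}, as does your upgrade of the non-displaceable compact subset to a compact subsurface.
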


\begin{proof}
The case in which \( M \) has infinite mapping class group is handled by Lemma~\ref{lem:displaceable}. 
So let us assume that the mapping class group of \( M \) is finite. 
Note that this also implies that \( M \) is of finite type. 
Under these assumptions, \( \Homeo_0(M) \) is an open finite-index normal subgroup of \( \Homeo(M) \); by Proposition~\ref{prop:vr-normal}, it follows that \( \Homeo(M) \) has the virtual Rokhlin property if and only if \( \Homeo_0(M) \) has the virtual Rokhlin property. 
The result follows by applying Theorem~\ref{thm:main3}. 
\end{proof}


\section{Two maximally stable ends}

In the previous section, we saw that if \( \Homeo(M) \) has the virtual Rokhlin property, then \( M \) is weakly self-similar (Lemma~\ref{lem:cd-case}). 
And, we also saw that if \( M \) is self-similar, then \( \Homeo(M) \) has the virtual Rokhlin property if and only if \( M \) is spacious (Lemma~\ref{lem:ss-case}). 
Therefore, to establish Theorem~\ref{thm:main1}, it is left to prove the following: 

\begin{Thm}
\label{thm:2}
Let \( M \) be a weakly self-similar 2-manifold with exactly two maximally stable ends.
Then, \( \Homeo(M) \) (resp., \( \mcg(M) \)) has the virtual Rokhlin property if and only if \( M \) is spacious. 
\end{Thm}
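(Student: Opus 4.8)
The plan is to use Proposition~\ref{prop:uniform} to fix, once and for all, a uniquely self-similar 2-manifold \( N \) with \( M \cong N \# N \) and a simple closed curve \( c_0 \) separating the two maximally stable ends \( \mu_1,\mu_2 \), so that the closure of each component of \( M \ssm c_0 \) is a copy of \( N \) with an open disk removed. Since \( \mu_1 \) and \( \mu_2 \) are the maximal ends, every homeomorphism of \( M \) preserves the set \( \{\mu_1,\mu_2\} \), so recording whether a homeomorphism swaps the two ends defines a continuous surjection \( \Homeo(M) \to \bz/2\bz \) whose kernel \( G \)---the pointwise stabilizer of \( \{\mu_1,\mu_2\} \)---is an open subgroup of index at most two. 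By construction \( \Ht(M) \le G \).

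For the forward implication (spacious \( \Rightarrow \) virtual Rokhlin), I would first prove that \( \Ht(M) \) is dense in \( G \), so that \( \overline{\Ht(M)} = G \) is open of finite index. Given \( g \in G \) and a basic neighborhood \( U(K,W) \) of \( g \), the idea is to \emph{trim} \( g \) near \( \mu_1 \) and \( \mu_2 \): since \( g \) fixes each maximal end, for suitable neighborhoods \( V_i \) of \( \mu_i \), each homeomorphic to \( N \) with an open disk removed and disjoint from \( K \), the image \( g(V_i) \) is again a neighborhood of \( \mu_i \), and one replaces \( g \) on these end-neighborhoods by the identity without disturbing its behaviour on \( K \). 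The only obstruction to such a trimming is a net transfer of complexity toward \( \mu_i \); spaciousness is precisely the hypothesis that rules this out, and the self-similarity of \( N \) lets one absorb the difference, producing the required \( \gamma \in \Ht(M) \) with \( \gamma(K)\subset W \).

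It then remains to show that \( G = \overline{\Ht(M)} \) has the Rokhlin property, for which I would verify the joint embedding property. Given two nonempty open subsets of \( G \), I would represent them, after intersecting with the dense subgroup \( \Ht(M) \), by basic compact-open conditions prescribing how finitely many curves are to be mapped. The strategy is to use an element of \( \Ht(M) \) to conjugate the second configuration, pushing its support into a small self-similar region accumulating on a non-maximal end of a single side and hence disjoint from the first configuration; spaciousness guarantees that the relevant curves fall into finitely many \( \Ht(M) \)-orbit classes, so that the two configurations can be aligned and simultaneously realized by one element of \( G \). Here the Rokhlin property of \( \Homeo(N) \) from Theorem~\ref{thm:import}, applied to the self-similar sides, provides the room needed to realize both tasks at once. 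I expect this joint-embedding construction---keeping the conjugator and the realizing homeomorphism inside \( G \) while exploiting self-similarity only of the two sides---to be the main obstacle.

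For the reverse implication I would argue by contraposition: if \( M \) is not spacious, then some separating curve has infinitely many \( \Ht(M) \)-orbit classes within its \( \Homeo(M) \)-orbit, and counting the signed net complexity that an element of \( G \) transfers across a fixed separating curve yields a nontrivial continuous homomorphism \( \phi \co G \to \bz \). Since \( \Homeo(M) \) is Polish and \( \bz \) is cm-slender, Proposition~\ref{prop:trivial} forbids \( G \) from having the virtual Rokhlin property; and if \( \Homeo(M) \) had the virtual Rokhlin property then, by Proposition~\ref{prop:vr-normal}, its unique open finite-index Rokhlin subgroup would be contained in \( G \) (it cannot surject onto \( \bz/2\bz \)), forcing \( G \) to have the virtual Rokhlin property---a contradiction. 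Finally, the mapping class group statement follows in both directions because the quotient map onto \( \mcg(M) \) is open and continuous (transporting the virtual Rokhlin property forward) while the counting homomorphism \( \phi \) factors through \( \mcg(M) \) (transporting the obstruction).
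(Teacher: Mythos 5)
Your reverse direction has a genuine gap: the counting homomorphism \( \phi \co G \to \bz \) you invoke does not exist in general. Failure of spaciousness only says that some curve separating the maximally stable ends has infinitely many \( \Ht(M) \)-orbit classes inside its \( \Homeo(M) \)-orbit; it hands you nothing discrete to count. The ``signed net complexity transferred across a fixed separating curve'' construction works precisely when there is a countable invariant to track---isolated immediate predecessors with countable orbit, handles, or crosscaps---and that is the content of Theorem~\ref{thm:characterize}, which requires the upward-stability tameness hypothesis; accordingly, Corollary~\ref{cor:cohomology} is stated only for upwardly stable \( M \). The manifold \( X \) of Section~\ref{sec:example} defeats your scheme outright: it is weakly self-similar with exactly two maximally stable ends and is not spacious, yet none of conditions (i)--(iii) hold (the immediate predecessors of its maximal ends have \emph{uncountable} orbit), so no evident \( \bz \)-count is available, and Proposition~\ref{prop:monster} instead distinguishes the \( \Ht \)-orbits of the curves \( d_i \) by an ad hoc cardinality estimate on end types. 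The paper's proof avoids homomorphisms entirely: it applies the JEP of the given Rokhlin subgroup \( G \) directly, intersecting the open set \( U_{\alpha,\beta} \) with a conjugate of \( f\cdot U_\Sigma \) for a high-power pseudo-Anosov \( f \) supported on a compact subsurface \( \Sigma \) separating the maximal ends; a subsurface-projection estimate forces \( a \) to be disjoint from \( \tilde\sigma(\Sigma) \), and Lemma~\ref{lem:G-telescoping} then places \( a \) and \( b \) in the same \( \Ht \)-orbit, which together with the finite index of \( \widetilde G \) yields spaciousness.

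Your forward direction also has errors, though more repairable ones. First, \( \overline{\Ht(M)} \) is \emph{not} the full pointwise stabilizer of \( \{\mu_1,\mu_2\} \): every element of \( \Ht(M) \) is the identity outside a tube, hence orientation-preserving when \( M \) is orientable, and \( \Homeo^+(M) \) is clopen; a reflection of \( \mathbb S^1\times\br \) fixing both ends lies in your \( G \) but not in \( \overline{\Ht} \). Moreover \( \overline{\Ht} \) acts trivially on the finite set \( \Ht\backslash(\Homeo(M)\cdot c) \), which your \( G \) need not. This is why Lemma~\ref{lem:dense} defines \( N \) as the end-fixing, orientation-preserving part of the kernel of the action from Lemma~\ref{lem:action}. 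Second, your ``trimming'' step is unjustified as stated: spaciousness does not literally forbid a net transfer of complexity toward the ends (that equivalence is Theorem~\ref{thm:characterize} and again needs upward stability). What spaciousness actually supplies, via the pigeonhole argument of Lemma~\ref{lem:tube-disjoint}, is a \( T \)-translation \emph{inside} \( \Ht(M) \); the paper then combines this with Lemma~\ref{lem:ambient} and the support-splitting \( f^{-1}\circ g_2\circ g_1 = f_1\circ f_2 \) to correct \( f \) into \( \Ht \) on the prescribed compact sets. Finally, your appeal to the Rokhlin property of \( \Homeo(N) \) via Theorem~\ref{thm:import} in the JEP step is a red herring: Lemma~\ref{lem:jep} is a disjoint-support commuting argument---translate one tube off the other by \( \sigma\in\Ht \) and take \( \tau = g_1 g_2^\sigma \)---and never uses the self-similar sides' homeomorphism groups.
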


Before proving Theorem~\ref{thm:2}, we first establish basic facts about how \( \Ht(M) \) sits inside of \( \Homeo(M) \) and how it acts on curves and subsurfaces.

\begin{Lem}
\label{lem:action}
Let \( a \) be a simple closed curve in \( M \), let \( \Ht = \Ht(M) \), and let \( X = \Gamma \backslash (\Homeo(M) \cdot a) \).
Then, \( \Homeo(M) \) acts on \( X \) via \( f \cdot (\Gamma \cdot h(a)) = \Gamma \cdot f(h(a)) \).  
Moreover, if \( X \) is spacious, then the kernel of this action is an open finite-index subgroup of \( \Homeo(M) \). 
\end{Lem}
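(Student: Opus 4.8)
The plan is to establish the claim in two stages: first that the displayed formula defines a genuine action of \( \Homeo(M) \) on \( X \), and then that---when \( M \) is spacious (the natural reading of the hypothesis, since spaciousness is a property of the manifold)---the kernel of this action is open and of finite index. The crucial preliminary, on which everything turns, is that \( \Ht = \Ht(M) \) is a \emph{normal} subgroup of \( \Homeo(M) \). This holds because the set \( \mathcal M \) of maximally stable ends is a topological invariant of \( M \) (each such end is maximal and stable in the sense of Mann--Rafi), so every \( f \in \Homeo(M) \) carries \( \mathcal M \) to itself in the end space. Hence if \( \gamma \in \Ht \) is supported outside a subsurface neighborhood \( N \) of \( \mathcal M \), then \( f \gamma f^{-1} \) is supported outside \( f(N) \), which is again a neighborhood of \( \mathcal M \); thus \( f \gamma f^{-1} \in \Ht \). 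Granting normality, well-definedness of \( f \cdot (\Ht \cdot h(a)) = \Ht \cdot f(h(a)) \) is immediate: if \( h_2(a) = \gamma(h_1(a)) \) with \( \gamma \in \Ht \), then \( f(h_2(a)) = (f\gamma f^{-1})(f(h_1(a))) \) lies in \( \Ht \cdot f(h_1(a)) \). That the formula gives a left action is a routine check.

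For the second stage, write \( \rho \co \Homeo(M) \to \operatorname{Sym}(X) \) for the induced homomorphism and \( K = \ker \rho \) for the kernel of the action. Since \( M \) is spacious, \( X = \Ht \backslash (\Homeo(M) \cdot a) \) is finite, so \( \operatorname{Sym}(X) \) is finite and \( K \) has finite index. It then remains to prove that \( K \) is open, and I would do this by showing that each point stabilizer \( \operatorname{Stab}(x) \) is open and then invoking \( K = \bigcap_{x \in X} \operatorname{Stab}(x) \); here the finiteness of \( X \) (spaciousness) is used a second time, to make this a \emph{finite} intersection of open sets.

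To see that \( \operatorname{Stab}(x) \) is open for \( x = \Ht \cdot c \), I would take \( f \in \operatorname{Stab}(x) \), set \( c' = f(c) \), and choose a closed annular neighborhood \( A \) of \( c' \) disjoint from some neighborhood of \( \mathcal M \) (possible since \( c' \) is compact). The basic open set \( U(c, A^{\mathrm o}) \) contains \( f \), and for any \( g \in U(c, A^{\mathrm o}) \) the curve \( g(c) \) lies in the open annulus \( A^{\mathrm o} \); being the homeomorphic image of \( c \), it is essential exactly when \( c \) is. In the essential case \( g(c) \) is isotopic to the core of \( A \), hence to \( c' \), by an isotopy supported in \( A \); by isotopy extension this is realized by a compactly supported \( \delta \), which therefore lies in \( \Ht \), so \( g(c) = \delta(c') \in \Ht \cdot c \) and \( g \in \operatorname{Stab}(x) \). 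Thus \( U(c, A^{\mathrm o}) \subseteq \operatorname{Stab}(x) \). The degenerate case, in which \( a \) (hence \( c \)) is nullhomotopic, is immediate: all nullhomotopic simple closed curves bound disks and are related by compactly supported homeomorphisms, so \( X \) is a singleton and \( K = \Homeo(M) \) is open.

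The step I expect to require the most care is the openness of the point stabilizers---specifically, justifying that a simple closed curve lying in a thin annulus around \( c' \) is genuinely isotopic to \( c' \) (rather than compressible or otherwise inequivalent) and that the realizing isotopy can be taken compactly supported, so as to lie in \( \Ht \). The normality of \( \Ht \), though conceptually simple, also deserves a clean statement, as it is the hinge on which both well-definedness and the identification of \( \delta \gamma_0 \) with an element of \( \Ht \) depend.
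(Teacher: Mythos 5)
Your proposal is correct and follows essentially the same route as the paper's proof: normality of \( \Ht \) gives well-definedness of the action, each point stabilizer is shown open via a basic open set \( U(c,A) \) for an annular neighborhood \( A \) (using that an essential simple closed curve in \( A \) is isotopic to the core by a compactly supported ambient isotopy, hence lies in the \( \Ht \)-orbit), and spaciousness of \( M \)---your reading of the hypothesis matches the paper's proof, which says ``if \( M \) is spacious, then \( X \) is finite''---makes the kernel a finite-index, finite intersection of open stabilizers. The only cosmetic difference is that the paper shortcuts the openness of \( U_x \) by noting it is a subgroup with nonempty interior, whereas you verify openness at every point of the stabilizer; your explicit treatment of normality and of the nullhomotopic case fills in steps the paper leaves implicit.
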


\begin{proof}
The existence of the action follows readily from the fact that \( \Ht \) is normal. 
Let \( N \) denote the kernel of the action of \( \Homeo(M) \) on \( X \). 
Let \( x \in X \), and consider the set \[ U_x = \{ f \in \Homeo(M) : f\cdot x=x \}. \]
Note that \( N = \bigcap_{x \in X} U_x \).
Fix a representative simple closed curve \( c \) in \( x \), and let \( A \) be an open annular neighborhood of \( c \). 
Then, the open subset \( U(c, A) \) is contained in \( U_x \), and as \( U_x \) is a subgroup with nonempty interior, it is open. 
To finish, if \( M \) is spacious, then \( X \) is finite.
Hence, if \( M \) is spacious, then \( N \) is finite index and, as \( N \) is a finite intersection of open sets, is open.
\end{proof}

In what follows, it will be helpful to have the following notions:  in a weakly self-similar 2-manifold \( M \) with exactly two maximally stable ends, a \emph{tube} is a subsurface co-bounded by two disjoint simple closed curves, each of which separates the two maximally stable ends.
Observe that \( \Ht(M) \)  is the normal subgroup of \( \Homeo(M) \) consisting of elements whose support is contained in a tube. 
Given a tube \( T \) in \( M \), we say a homeomorphism \( \vp \co M \to M \) is a \emph{\( T \)-translation} if \( \vp^n(T) \cap \vp^m(T) = \varnothing \) for  any distinct \( n,m \in \bz \). 

\begin{Lem}
\label{lem:translation_exists}
Let \( M \) be a weakly self-similar 2-manifold with exactly two maximally stable ends. 
If \( T \) is a tube in \( M \), then a \( T \)-translation exists. 
\end{Lem}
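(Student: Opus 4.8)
The plan is to produce a single homeomorphism that shifts \(T\) monotonically toward one of the two maximally stable ends, arranging the shift to ``skip ahead'' past \(T\) so that the iterates are not merely disjoint from \(T\) but pairwise disjoint. Label the two maximally stable ends \(\mu_1,\mu_2\), and let \(N\) be the uniquely self-similar surface with \(M\) homeomorphic to \(N\#N\) provided by Proposition~\ref{prop:uniform}. For a simple closed curve \(c\) separating \(\mu_1\) from \(\mu_2\), write \(U(c)\) for the closure of the component of \(M\ssm c\) containing \(\mu_2\); by Proposition~\ref{prop:uniform}, both \(U(c)\) and \(\overline{M\ssm U(c)}\) are homeomorphic to \(N\) with an open disk removed. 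Write \(c_0,c_1\) for the two boundary curves of \(T\), labeled so that \(U(c_1)\subsetneq U(c_0)\); then \(T=\overline{U(c_0)\ssm U(c_1)}\). Since \(T\) contains neither \(\mu_1\) nor \(\mu_2\), its iterates will be pairwise disjoint once I find \(\varphi\in\Homeo(M)\) fixing \(\mu_1\) and \(\mu_2\) with \(\varphi(U(c_0))\subsetneq U(c_1)^\circ\): indeed, setting \(U_n=\varphi^n(U(c_0))\) and \(V_n=\varphi^n(U(c_1))\), applying \(\varphi^n\) to this strict inclusion produces a strictly nested chain \(\cdots\supset U_n\supset V_n\supset U_{n+1}\supset\cdots\), and since \(\varphi^n(T)=\overline{U_n\ssm V_n}\) while \(U_{n+1}\subset V_n^\circ\), the tubes \(\varphi^n(T)\) are pairwise disjoint. (Shifting by only one step, i.e.\ forcing \(\varphi(c_0)=c_1\), would make consecutive images share the curve \(c_1\); the strict inclusion into \(V_n^\circ\) is exactly what opens the necessary gaps.)

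It then remains to construct such a \(\varphi\). First I would produce a curve realizing the skip. Using that \(\mu_2\) is maximally stable, take the nested sequence \(\{\Sigma_n\}\) from the definition, each homeomorphic to \(N\) with an open disk removed and with \(\bigcap_n\Sigma_n=\varnothing\). Since \(c_1=\partial U(c_1)\) is compact and the closed sets \(\Sigma_n\cap c_1\) are nested with empty intersection, some \(\Sigma_{n_0}\) is disjoint from \(c_1\); being a connected neighborhood of \(\mu_2\), this \(\Sigma_{n_0}\) lies in \(U(c_1)^\circ\), so I may set \(c_2=\partial\Sigma_{n_0}\), a single separating curve with \(U(c_2)=\Sigma_{n_0}\subsetneq U(c_1)^\circ\). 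Now I define \(\varphi\) by sending \(c_0\) to \(c_2\): by Proposition~\ref{prop:uniform} each of \(\overline{M\ssm U(c_0)}\), \(U(c_0)\), \(\overline{M\ssm U(c_2)}\), \(U(c_2)\) is homeomorphic to \(N\) with an open disk removed, so I can choose a homeomorphism \(\overline{M\ssm U(c_0)}\to\overline{M\ssm U(c_2)}\) and a homeomorphism \(U(c_0)\to U(c_2)\) whose restrictions to \(c_0\) agree (any homeomorphism of the single boundary circle \(c_0\to c_2\) is realizable, as these surfaces have the same type). Gluing these along \(c_0\) yields \(\varphi\in\Homeo(M)\) carrying the \(\mu_i\)-side of \(c_0\) onto the \(\mu_i\)-side of \(c_2\); hence \(\varphi\) fixes \(\mu_1,\mu_2\) and satisfies \(\varphi(U(c_0))=U(c_2)\subsetneq U(c_1)^\circ\), as required.

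With \(\varphi\) in hand, the disjointness computation of the first paragraph gives \(\varphi^n(T)\cap\varphi^m(T)=\varnothing\) for distinct \(n,m\), so \(\varphi\) is a \(T\)-translation. The step I expect to require the most care is verifying that the glued map is a genuine self-homeomorphism of \(M\) fixing both maximal ends, and hence that the half-spaces \(U_n=\varphi^n(U(c_0))\) really do form a strictly monotone bi-infinite nested family (so that the curves \(\varphi^n(c_0)\) and \(\varphi^n(c_1)\) are pairwise disjoint, not merely linearly ordered). The two enabling facts are Proposition~\ref{prop:uniform}, which forces every piece cut off by a separating curve to be homeomorphic to \(N\) with an open disk removed and thereby makes the gluing defining \(\varphi\) possible, and the maximal stability of \(\mu_2\), which supplies a separating curve \(c_2\) deep enough to create the gap.
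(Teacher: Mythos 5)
Your construction is correct and is exactly the argument the paper compresses into its one-line proof, which simply cites Proposition~\ref{prop:uniform} and the change of coordinates principle: you cut along a separating curve, use that both complementary pieces are homeomorphic to \(N\) with an open disk removed to glue a ``shift'' homeomorphism, and your use of the nested exhaustion \(\{\Sigma_n\}\) from the definition of a maximally stable end to find the deeper curve \(c_2\subset U(c_1)^\circ\) is the natural way to supply the gap that makes all iterates pairwise disjoint. In short, you have written out in full detail precisely the proof the authors intended.
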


\begin{proof}
This readily follows from Proposition~\ref{prop:uniform} and the change of coordinates principle. 
\end{proof}

\begin{Lem}
\label{lem:tube-disjoint}
Let \( M \) be a weakly self-similar 2-manifold with exactly two maximally stable ends. 
If \( M \) is spacious, then given any tube \( T \)  in \( M \) there exists \( T \)-translation in \( \Gamma(M) \). 
\end{Lem}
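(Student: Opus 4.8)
The plan is to start from the \( T \)-translation supplied by Lemma~\ref{lem:translation_exists} and then use spaciousness, through a pigeonhole argument, to relocate its dynamics into \( \Gamma(M) \).

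Write \( \mu_-,\mu_+ \) for the two maximally stable ends and let \( \partial_- T \) denote the boundary component of \( T \) on the \( \mu_- \)-side. First I would invoke Lemma~\ref{lem:translation_exists} to obtain a \( T \)-translation \( \psi \); after replacing \( \psi \) by \( \psi^2 \) I may assume \( \psi \) fixes each of \( \mu_\pm \). Then the curves \( c_n := \psi^n(\partial_- T) \) are pairwise disjoint simple closed curves, each separating \( \mu_- \) from \( \mu_+ \), and (after possibly replacing \( \psi \) by \( \psi^{-1} \)) linearly ordered so that \( c_n \) lies on the \( \mu_- \)-side of \( c_{n+1} \). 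All of the \( c_n \) lie in the single orbit \( \Homeo(M)\cdot\partial_- T \), so spaciousness guarantees that they meet only finitely many classes in \( \Gamma(M)\backslash(\Homeo(M)\cdot\partial_- T) \). By pigeonhole, infinitely many of the \( c_n \) share a class; in particular I may choose indices \( i<j \) with \( j\ge i+2 \) and with \( c_i \) and \( c_j \) in the same \( \Gamma(M) \)-orbit class. Unwinding the definition of orbit class and absorbing a compactly supported ambient isotopy (which lies in \( \Gamma(M) \)), this produces \( \gamma\in\Gamma(M) \) with \( \gamma(c_i)=c_j \).

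The key step is to observe that such a \( \gamma \) is automatically a translation. Let \( A_n \) be the closed \( \mu_+ \)-side of \( c_n \), so that \( A_{n+1}\subsetneq A_n \) and \( \gamma(A_i)=A_j \), the latter because \( \gamma \) fixes \( \mu_+ \). Since \( A_j\subsetneq A_i \) and homeomorphisms preserve inclusions, the family \( \{\gamma^n(A_i)\}_{n\in\bz} \) is strictly nested, so the subsurfaces \( \gamma^n(R) \), with \( R:=\overline{A_i\setminus A_j} \), have pairwise disjoint interiors. The tube \( T_i:=\psi^i(T) \) sits inside \( R \), sharing the boundary \( c_i \) but, because \( j\ge i+2 \), with its \( \mu_+ \)-boundary \( \psi^i(\partial_+ T) \) lying strictly on the \( \mu_- \)-side of \( c_j \); tracking where each iterate meets the separating curves \( \gamma^n(c_j) \) then shows that the \( \gamma^n(T_i) \) are pairwise disjoint, including their boundaries. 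Hence \( \gamma \) is a \( T_i \)-translation. Finally, since \( \Gamma(M) \) is normal in \( \Homeo(M) \), the conjugate \( \varphi:=\psi^{-i}\gamma\psi^i \) again lies in \( \Gamma(M) \) and satisfies \( \varphi^n(T)=\psi^{-i}(\gamma^n(T_i)) \), a pairwise-disjoint family; thus \( \varphi \) is the desired \( T \)-translation in \( \Gamma(M) \).

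I expect the main obstacle to be conceptual rather than technical: one must resist the intuition that a homeomorphism supported in a tube (hence fixing neighborhoods of \( \mu_\pm \)) cannot translate \( T \) off itself infinitely often. The point is that the iterates \( \gamma^n(T_i) \) need not escape to the maximal ends; they instead accumulate onto the frontier of the nested intersection \( \bigcap_n \gamma^n(A_i) \), an interior locus contained in the support of \( \gamma \), with the successive copies shrinking. Making this precise—verifying the strict nesting \( \gamma^{n+1}(A_i)\subsetneq\gamma^n(A_i) \) and the boundary-disjointness of the \( \gamma^n(T_i) \)—is the only part requiring real care, and the gap condition \( j\ge i+2 \) is exactly what prevents consecutive iterates from sharing a boundary curve.
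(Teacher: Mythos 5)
Your proof is correct and is essentially the paper's argument: both start from the translation supplied by Lemma~\ref{lem:translation_exists}, pigeonhole the iterates \( \psi^n(\partial_-T) \) among the finitely many classes in \( \Gamma(M)\backslash(\Homeo(M)\cdot \partial_- T) \), and exploit normality of \( \Gamma(M) \)---the paper does so up front, via the action of Lemma~\ref{lem:action}, to produce \( g\in\Gamma(M) \) with \( g(a)=\psi^{j-i}(a) \), which is then itself the desired \( T \)-translation by the same monotone-nesting observation you make, whereas you conjugate by \( \psi^{-i} \) at the very end. (One minor remark: your gap condition \( j\ge i+2 \) is superfluous, since \( T_i=\psi^i(T) \) is already disjoint from \( c_{i+1}\subset\psi^{i+1}(T) \) by the definition of a \( T \)-translation, so \( j\ge i+1 \) suffices.)
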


\begin{proof}
Choose a \( T \)-translation \( \vp \) in \( \Homeo(M) \). 
Up to replacing \( \vp \) with \( \vp^2 \), we may assume that \( \vp \) stabilizes the maximally stable ends of \( M \). 
Let \( a \) be the component of \( \partial T \) such that the tube bounded by \( a \) and \( \vp(a) \) contains \( T \). 
Then, as \( M \) is spacious, there exist distinct natural numbers \( i \) and \( j \) such that \( \vp^i(a) \) and \( \vp^j(a) \) are in the same \( \Gamma(M) \)-orbit.
Assume \( j > i \).
By Lemma~\ref{lem:action}, there exists \( g \in \Gamma(M) \) such that \( g(a) = \vp^{j-i}(a) \). 
Using that \( g \) stabilizes the maximally stable ends and that the tube co-bounded by \( a \) and \( g(a) \) contains \( T \), we must have that \( g(T) \cap T = \varnothing \) so that \( g \) is a \( T \)-translation. 
\end{proof}

\begin{figure}
 \labellist
 \small\hair 2pt

  \pinlabel {\(a_1\)} [ ] at 140 -15
    \pinlabel {\(b_1\)} [ ] at 300 -15
      \pinlabel {\(a_2\)} [ ] at 517 -15
        \pinlabel {\(b_2\)} [ ] at 677 -15
          \pinlabel {\(g_1(b_1)\)} [ ] at 740 -15
            \pinlabel {\(c\)} [ ] at 788 -15

  \pinlabel {\(T_1\)} [ ] at 220 30
    \pinlabel {\(T_2\)} [ ] at 600 30

 \endlabellist  
\includegraphics[width=\textwidth]{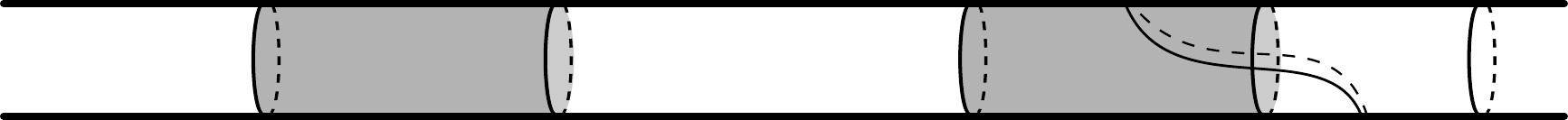}
\caption{A schematic of the curves and tubes that appear in the proof of Lemma~\ref{lem:ambient}.}
\label{fig:4.5}
\end{figure}

\begin{Lem}
\label{lem:ambient}
Let \( M \) be a spacious 2-manifold with exactly two maximally stable ends, and let \( \Ht = \Ht(M) \).
Let \( T_1 \) and \( T_2 \)  be disjoint homeomorphic tubes in \( M \), and let \( a_i \) and \( b_i \) denote the boundary components of \( T_i \), labelled such that both \( b_1 \) and \( a_2 \) separate \( a_1 \) from \( b_2 \).
If \( a_2 \in \Ht \cdot a_1 \) and \( b_2 \in \Ht \cdot b_1 \), then there exists \( g \in \Ht \) with \( g(T_1) = T_2 \). 
\end{Lem}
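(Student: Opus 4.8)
The plan is to first move \( a_1 \) onto \( a_2 \) and then to correct the remaining boundary while holding \( a_2 \) fixed, converting the orbit information about \( b_1 \) into an honest homeomorphism of tubes by means of one auxiliary curve pushed far out toward a maximal end. Using the hypothesis \( a_2 \in \Ht\cdot a_1 \), I would first choose \( g_1 \in \Ht \) with \( g_1(a_1)=a_2 \) and set \( T_1' := g_1(T_1) \) and \( \beta := g_1(b_1) \), so that \( T_1' \) is a tube co-bounded by \( a_2 \) and \( \beta \) with \( T_1' \cong T_1 \cong T_2 \). Since \( g_1 \in \Ht \) fixes both maximal ends, \( \beta \) lies on the same side of \( a_2 \) as \( b_2 \); writing \( R \) for that component of \( M \ssm a_2 \), Proposition~\ref{prop:uniform} says \( \overline R \) is homeomorphic to \( N \) with an open disk removed. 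Moreover \( \beta = g_1(b_1) \in \Ht\cdot b_1 = \Ht\cdot b_2 \), so \( \beta \) and \( b_2 \) lie in a common \( \Ht \)-orbit and both separate the two maximal ends. This reduces the lemma to producing \( h \in \Ht \) with \( h(a_2)=a_2 \) and \( h(T_1')=T_2 \), after which \( g := h\circ g_1 \in \Ht \) satisfies \( g(T_1)=T_2 \).

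The key step is to upgrade the orbit relation \( \beta \in \Ht\cdot b_2 \) to a genuine homeomorphism of tubes. I would choose \( \gamma \in \Ht \) with \( \gamma(\beta)=b_2 \); its support lies in some tube, and I would then pick a simple closed curve \( c \) separating the maximal ends that lies beyond \( \beta \), \( b_2 \), and \( \supp(\gamma) \) toward the maximal end of \( R \) (such \( c \) exists by the change-of-coordinates principle together with Proposition~\ref{prop:uniform}). Then \( \gamma \) fixes \( c \) and everything past it, and since \( \gamma \) also fixes both maximal ends and sends \( \beta \) to \( b_2 \), it carries the tube co-bounded by \( \beta \) and \( c \) onto the tube co-bounded by \( b_2 \) and \( c \). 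In particular these outer tubes are homeomorphic by a map that is the identity on \( c \) and agrees with \( \gamma \) on \( \beta \).

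To assemble \( h \), let \( S \) be the tube co-bounded by \( a_2 \) and \( c \). The curves \( \beta \) and \( b_2 \) split \( S \) as \( S = T_1' \cup [\beta,c] = T_2 \cup [b_2,c] \), where \( [\,\cdot,\cdot\,] \) denotes the intermediate tube. Using \( T_1' \cong T_2 \) on the inner piece and the homeomorphism just produced on the outer piece, I would build a homeomorphism \( S \to S \) that restricts to the identity on \( \partial S = a_2 \cup c \), sends \( \beta \) to \( b_2 \), and therefore sends \( T_1' \) onto \( T_2 \). Extending it by the identity outside \( S \) yields \( h \in \Ht \), since its support lies in the tube \( S \); composing gives the desired \( g = h\circ g_1 \).

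The delicate point, and the step I expect to be the main obstacle, is the gluing in the last paragraph. To combine the inner homeomorphism \( T_1' \to T_2 \) with the outer homeomorphism \( [\beta,c] \to [b_2,c] \) into a single homeomorphism of \( S \), one must select the homeomorphism \( T_1' \to T_2 \) so that it carries the \( a_2 \)-boundary to the \( a_2 \)-boundary---rather than interchanging the two boundary circles of the tube---and so that on \( \beta \) it restricts to exactly the circle identification \( \gamma|_{\beta} \) used on the outer piece, while being the identity on \( a_2 \); symmetrically one arranges the identity on \( c \). Thus the real work is upgrading the abstract homeomorphism \( T_1' \cong T_2 \) to one respecting the labeling of boundary components by the maximal ends they face and matching the chosen parametrization of \( \beta \). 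I expect to handle this with the change-of-coordinates principle (to fix the boundary labeling) together with standard collar adjustments supported near \( a_2 \), \( \beta \), and \( c \) (to match the circle identifications).
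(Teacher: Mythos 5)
Your proposal is correct and follows essentially the same route as the paper's proof: first apply \( g_1 \in \Ht \) with \( g_1(a_1) = a_2 \), then use a tube-supported \( \gamma \in \Ht \) carrying \( g_1(b_1) \) onto \( b_2 \) together with an auxiliary curve \( c \) beyond its support, compare the two decompositions of the tube co-bounded by \( a_2 \) and \( c \) (inner pieces both homeomorphic to \( T_1 \), outer pieces identified by \( \gamma \)), and conclude with an ambient homeomorphism supported in that tube sending \( g_1(b_1) \) to \( b_2 \). The boundary-parametrization gluing you flag at the end is precisely what the paper's invocation of the change-of-coordinates principle absorbs, so your argument matches the paper's in substance.
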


\begin{proof}
Let \( g_1 \in \Ht \) such that \( g_1(a_1) = a_2 \). 
Note that there is an element \( \gamma \) of \( \Ht \), supported in a tube \( T \),  mapping \( g_1(b_1) \) onto \( b_2 \). 
Let \( c \) denote the component of the boundary of \( T \) such that \( a_2 \) and \( c \) bound a tube containing \( g_1(b_1) \) and \( b_2 \). 
The tube bounded by \( g_1(b) \) and \( c \) is  homeomorphic to the tube bounded by \( b_2 = \gamma(g_1(b_1)) \) and \( c=\gamma(c) \); moreover, the tube bounded by \( a_2 \) and \( g_1(b_1) \) and the tube bounded by \( a_2 \) and \( b_2 \) are homeomorphic (as both are homeomorphic to \( T_1 \)).
Therefore, by the change of coordinates principle, there exists an ambient homeomorphism \( g_2 \) of \( M \) supported in the tube bounded by \( a_2 \) and \( c \) and mapping \( g_1(b_1) \) onto \( b_2 \).
Therefore, setting \( g = g_2\circ g_1 \in \Ht \)  we have \( g(T_1) =  T_2 \). 
\end{proof}

Over the next two lemmas, we turn to establishing the reverse direction of the statement in Theorem~\ref{thm:2}.
The proof of Lemma~\ref{lem:jep} is very similar to \cite[Lemma~4.3]{LanierMapping} (and \cite[Theorem~11.6]{VlamisHomeomorphism}), and so the novelty (in this direction) lies in the following lemma.

\begin{Lem}
\label{lem:dense}
Let \( M \) be a spacious 2-manifold with exactly two maximally stable ends, and let \( \Ht = \Ht(M) \).
Then \( \overline \Ht \), the closure of \( \Ht \) in \( \Homeo(M) \), is an open finite-index subgroup of \( M \).  
\end{Lem}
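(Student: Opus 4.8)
The plan is to identify $\overline{\Ht}$ explicitly as an intersection of two open finite-index subgroups and then verify the two resulting inclusions. Fix a simple closed curve $a$ separating the two maximally stable ends $\mu_1,\mu_2$, and let $N$ be the kernel of the action of $\Homeo(M)$ on the finite set $X=\Ht\backslash(\Homeo(M)\cdot a)$, which by Lemma~\ref{lem:action} is open and of finite index. Let $G$ be the subgroup of $\Homeo(M)$ fixing each of $\mu_1$ and $\mu_2$. The claim I would prove is that $\overline{\Ht}=G\cap N$. The inclusion $\overline{\Ht}\subseteq G\cap N$ is the easy direction: every element of $\Ht$ preserves each $\Ht$-orbit of curves, so $\Ht\subseteq N$, and since $\Ht$ is supported in tubes it fixes $\mu_1,\mu_2$, so $\Ht\subseteq G$; as $N$ is open (hence closed) and $G$ is closed (it is the preimage of a closed stabilizer under the continuous map $\Homeo(M)\to\Homeo(\cE(M))$ composed with a continuous evaluation), the closure $\overline{\Ht}$ lands in $G\cap N$.

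Next I would check that $G$ is open of index at most two, so that $G\cap N$ is open and of finite index. Since $\mu_1,\mu_2$ are the unique ends of maximal complexity, every homeomorphism preserves the set $\{\mu_1,\mu_2\}$, so $G$ is the kernel of a homomorphism $\Homeo(M)\to\mathrm{Sym}\{\mu_1,\mu_2\}$ and has index at most two. For openness, choose a clopen partition $\cE(M)=E_1\sqcup E_2$ with $\mu_i\in E_i$; the subgroup $G'$ preserving this partition is the preimage of the stabilizer of the clopen set $E_1$ in $\Homeo(\cE(M))$, and this stabilizer equals $U(E_1,E_1)\cap U(E_2,E_2)$ and is therefore open. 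Since each $E_i$ contains exactly one maximal end, $G'$ fixes each $\mu_i$, giving $G'\subseteq G$; as $G$ contains the open subgroup $G'$, it is open.

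The heart of the argument—and the main obstacle—is the reverse inclusion $G\cap N\subseteq\overline{\Ht}$, a local density statement. Given $f\in G\cap N$ and a basic compact-open neighborhood of $f$ determined by sets $U(K_j,W_j)$, I must produce $\gamma\in\Ht$ in that neighborhood. Because $f\in N$, the curve $f(a)$ lies in the $\Ht$-orbit of $a$, so after composing with a suitable element of $\Ht$ (and absorbing a compactly supported isotopy as in the proof of Lemma~\ref{lem:self-similar_spacious}) I may assume $f(a)=a$; since $f\in G$ fixes the ends, it maps each of the two complementary sides $S_1,S_2$ of $a$ to itself. By Proposition~\ref{prop:uniform}, capping off its boundary exhibits each $S_i$ as a uniquely self-similar 2-manifold with a single maximal end $\mu_i$, and in that setting homeomorphisms supported away from $\mu_i$ (rel boundary) are dense in the boundary-fixing homeomorphism group, by the density results underlying Lemma~\ref{lem:self-similar_spacious} and \cite[Proposition~4.2]{LanierMapping}. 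Applying this on each side yields boundary-fixing homeomorphisms $\gamma_i$ of $S_i$, each supported away from $\mu_i$, with $\gamma_i(K_j\cap S_i)\subseteq W_j$ for all $j$.

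Finally I would glue: since $\gamma_1$ and $\gamma_2$ both fix $a$ pointwise, they assemble into a homeomorphism $\gamma$ of $M$ whose support avoids a neighborhood of $\mu_1$ and of $\mu_2$, hence lies in a tube, so that $\gamma\in\Ht$; and $\gamma(K_j)\subseteq W_j$ for every $j$, placing $\gamma$ in the prescribed neighborhood. This proves $f\in\overline{\Ht}$, completing the inclusion and hence the equality $\overline{\Ht}=G\cap N$, from which openness and finite index follow. The principal technical care will be in the relative density step on each half—transferring the uniquely self-similar density statement to a surface with one boundary curve and arranging the approximants to fix that boundary—since this is what drives the whole argument; the openness of $G$ and the bookkeeping of the isotopy adjustment are comparatively routine.
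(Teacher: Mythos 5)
Your identification \( \overline\Ht = G\cap N \) is false when \( M \) is orientable, and the error is load-bearing. Every element of \( \Ht \) restricts to the identity on the nonempty open complement of a tube, so when \( M \) is orientable we have \( \Ht \subset \Homeo^+(M) \); since \( \Homeo^+(M) \) is an open (hence clopen) subgroup of \( \Homeo(M) \), it follows that \( \overline\Ht \subset \Homeo^+(M) \) as well. But \( G\cap N \) can contain orientation-reversing elements: for \( M = \bS^1\times\br \), all essential simple closed curves lie in a single \( \Ht \)-orbit (any two are disjoint from a common third curve far out toward an end, and disjoint essential curves cobound a compact tube), so \( X \) is a singleton and \( N = \Homeo(M) \); the reflection \( (\theta,t)\mapsto(-\theta,t) \) fixes both ends and therefore lies in \( G\cap N \ssm \overline\Ht \). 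This is exactly why the paper defines \( N \) as the subgroup of the kernel consisting of elements that fix the maximally stable ends \emph{and are orientation-preserving if \( M \) is orientable}; your \( G\cap N \) omits the last condition. In your density argument the failure surfaces at the reduction to boundary-fixing restrictions: after arranging \( f(a)=a \), an orientation-reversing \( f \) preserving the two sides of \( a \) must reverse the orientation of \( a \), and no element of \( \Ht \) can correct this (an orientation-preserving, side-preserving homeomorphism cannot reverse \( a \)), so \( f|_{S_i} \) can never be made to fix \( a \) pointwise. A related omission: even in the good cases, \( f|_a \) is merely a self-map of \( a \), and promoting ``fixes \( a \) setwise'' to ``fixes \( a \) pointwise'' requires realizing \( f|_a \) by an element of \( \Ht \); in the non-orientable case this is where the paper must invoke double slides, a step absent from your write-up.

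Separately, the engine of your argument---that in each half \( S_i \) (a uniquely self-similar 2-manifold with an open disk removed) the boundary-fixing homeomorphisms supported away from \( \mu_i \) are dense in the group of all boundary-fixing homeomorphisms---is not contained in the results you cite. \cite[Proposition~4.2]{LanierMapping} and Lemma~\ref{lem:self-similar_spacious} give density of \( \Ht(N) \) in \( \Homeo^+(N) \) for the capped-off manifold, with no rel-boundary condition; the rel-\( \partial \) statement is genuinely stronger, since one must push the discrepancy between \( f|_{S_i} \) and an approximant toward \( \mu_i \) while keeping \( a \) pointwise fixed. The paper sidesteps this entirely: rather than splitting along \( a \), it chooses a tube \( T \) containing the compact data \( K_i \), uses Lemma~\ref{lem:tube-disjoint} to produce a \( T_1 \)-translation displacing \( T\cup f(T) \), applies Lemma~\ref{lem:ambient} (using \( f\in N \) to match boundary orbits) to obtain \( g_1,g_2\in\Ht \) with \( g_2(g_1(T))=f(T) \), corrects boundary behavior with double slides, and factors \( f^{-1}\circ g_2\circ g_1 = f_1\circ f_2 \) with \( f_2 \) supported in \( T \) and \( f_1 \) in the closure of its complement, whence \( f\circ f_1 = g_2\circ g_1\circ f_2^{-1}\in\Ht \) lies in the given basic neighborhood. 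If you wish to keep your half-by-half scheme, you must actually prove the rel-boundary density---and its natural proof runs through essentially this same translation-and-factorization argument---so as written this step is asserted rather than established.
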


\begin{proof} 
Fix a separating simple closed curve \( c \) in \( M \) that separates the maximally stable ends of \( M \).
Let \( N \) be the subgroup of the kernel of the action of \( \Homeo(M) \) on \( \Gamma \backslash (\Homeo(M) \cdot c ) \) containing the elements that fix the maximally stable ends of \( M \) and that are orientation preserving if \( M \) is orientable.
By Lemma~\ref{lem:action}, \( N \) is an open finite-index subgroup of \( \Homeo(M) \).
We claim that \( \overline\Ht = N \). 

Fix a nonempty open subset \( U \) of \( N \). 
By shrinking \( U \), we may assume that there exist compact sets \( K_1, \ldots, K_m \) and precompact open sets \( V_1, \ldots, V_m \) such that \( U = \bigcap_{i=1}^m U(K_i, V_i) \). 
Fix \( f \in U \), and choose a tube \( T \) containing each of the \( K_i \) in its interior. 
Also, by enlarging \( T \) if necessary, we may assume that \( T \) is non-orientable if \( M \) is non-orientable.  
Let \( T_1 \) be a tube containing \( T \cup f(T) \).


\begin{figure}
 \labellist
 \small\hair 2pt

  \pinlabel {\(\ \partial_1\)} [ ] at 140 -15
    \pinlabel {\( \partial_2\)} [ ] at 230 -15

 \pinlabel {\(c\)} [ ] at 36 -15

  \pinlabel {\(T_1\)} [ ] at 258 80
    \pinlabel {\(\vp(T_1)\)} [ ] at 626 80
    
  \pinlabel {\(T\)} [ ] at 180 30
    \pinlabel {\(f(T)\)} [ ] at 310 30
    
    \pinlabel {\(\vp(T)\)} [ ] at 555 30
        \pinlabel {\(T'=\)} [ ] at 680 45
        \pinlabel {\(\vp(f(T))\)} [ ] at 675 20

 \endlabellist  
\includegraphics[width=\textwidth]{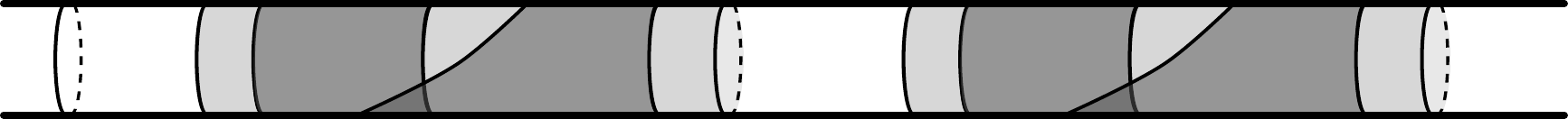}
\caption{A schematic of the curves and tubes that appear in the proof of Lemma~\ref{lem:dense}.}
\label{fig:4.6}
\end{figure}
 
 
By Lemma~\ref{lem:tube-disjoint}, there exists a \( T_1 \)-translation \( \vp \in \Gamma \).
Set \( T' = \vp(f(T)) \).
By construction,  \( T' \) is disjoint from \( T \cup f(T) \). 
Let \( \partial_1 \) and \( \partial_2 \) denote the boundary components of \( T \) (see Figure~\ref{fig:4.6} for a schematic).
By the definition of \( N \), the image \( f(\partial_i) \) is in the same \( \Gamma \)-orbit as \( \partial_i \); moreover, recall that by Proposition~\ref{prop:uniform}, \( \Homeo(M) \cdot c = \Homeo(M) \cdot \partial_i \).
Therefore,  by Lemma~\ref{lem:ambient}, there exists  \( g_1, g_2 \in \Ht \) such that \( g_1(T) = T' \) and \( g_2(T') = f(T) \).
Hence, \( (f^{-1} \circ g_2 \circ g_1) (T) = T \). 
In the case that \( M \) is non-orientable, it is possible that \( f^{-1} \circ g_2 \circ g_1 \) reverses the orientation of an annular neighborhood of one or each of the boundary components of \( M \).  
However, up to pre-composing \( g_1  \) with double slides (which are supported in tubes, see \cite[Section~5.1]{VlamisHomeomorphism}), we may assume that \( f^{-1} \circ g_2 \circ g_1 \) restricts to the identity on the boundary components of \( T \). 
We can therefore write \( f^{-1} \circ g_2 \circ g_1 = f_1 \circ f_2 \), where \( f_2 \) is supported on \( T \) and \( f_1 \) is supported in the closure of the complement of \( T \). 
It follows that \( f\circ f_1  = g_2\circ g_1 \circ f_2^{-1} \). 
Observe that each homeomorphism on the right-hand side is supported in a tube, and hence \( f \circ f_1 \in \Ht \). 
As \( f_1 \) leaves each of the \( K_i \) fixed, we have that \( f \circ f_1 (K_i) = f(K_i) \subset V_i \), and so \( f\circ f_1 \in U \) and \( \Ht \cap U \neq \varnothing \). 
Thus, as \( U \) was arbitrary, \( \Ht \) is dense in \( N \).
Therefore, \( \overline \Ht = N \) as \( N \) is a closed subgroup of \( \Homeo(M) \). 
\end{proof}

\begin{Lem}
\label{lem:jep}
If \( M \) is a spacious 2-manifold with exactly two maximally stable ends, then \( \Homeo(M) \) has the virtual Rokhlin property. 
\end{Lem}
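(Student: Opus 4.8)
The plan is to prove the lemma by showing that the subgroup $N := \overline{\Ht(M)}$ itself has the Rokhlin property. Since Lemma~\ref{lem:dense} already identifies $N$ as an open finite-index subgroup of $\Homeo(M)$, this immediately yields the virtual Rokhlin property by definition. As $N$ is closed in the Polish group $\Homeo(M)$, it is itself Polish, so by the proposition characterizing the Rokhlin property it suffices to verify that $N$ has the joint embedding property: for any two nonempty open sets $U, V \subseteq N$, there is $g \in N$ with $U \cap V^g \neq \varnothing$.

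To set up the argument, I would first replace $U$ and $V$ by basic open subsets of the form $\bigcap_{i} U(K_i, W_i)$ and $\bigcap_j U(L_j, Z_j)$, which only strengthens the claim. Using that $\Ht = \Ht(M)$ is dense in $N$ (again Lemma~\ref{lem:dense}), I can choose witnesses $f \in U \cap \Ht$ and $h \in V \cap \Ht$, each supported in a tube. I then fix a single tube $T_0$ large enough to contain the supports of $f$ and $h$ together with all the compacta $K_i$ and $L_j$, and invoke Lemma~\ref{lem:tube-disjoint} to obtain a $T_0$-translation $\vp \in \Ht \subseteq N$.

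The conjugating element will be $\vp$, and the common point of $U$ and $V^\vp$ will be $w := f \circ (\vp h \vp^{-1})$. Because $\supp(\vp h \vp^{-1}) \subseteq \vp(T_0)$ is disjoint from $T_0 \supseteq \supp(f)$, the homeomorphism $w$ is well defined and lies in $\Ht \subseteq N$. On each $K_i \subseteq T_0$ the factor $\vp h \vp^{-1}$ is the identity, so $w$ agrees with $f$ there and hence $w \in U$. Conjugating, $\vp^{-1} w \vp = (\vp^{-1} f \vp)\circ h$, and on each $L_j \subseteq T_0$ the factor $\vp^{-1} f \vp$ (supported in $\vp^{-1}(T_0)$) is the identity, so $\vp^{-1} w \vp$ agrees with $h$ there; thus $\vp^{-1} w \vp \in V$, i.e.\ $w \in V^\vp$. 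This gives $w \in U \cap V^\vp$ and establishes the joint embedding property.

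I expect the only delicate point to be arranging a single conjugator that works on both sides simultaneously: the same $\vp$ must push the support of $h$ off that of $f$ (to place $w$ in $U$) while, under conjugation by $\vp^{-1}$, pushing the support of $f$ off that of $h$ (to place $w$ in $V^\vp$). This dual requirement is exactly what the translation property supplies, since $\vp^{\pm 1}(T_0) \cap T_0 = \varnothing$. All of the substantive content is imported from earlier results---Lemma~\ref{lem:dense} for the openness, finite index, and density of $\overline{\Ht}$, and Lemma~\ref{lem:tube-disjoint} for the existence of a translation inside $\Ht$---so the remaining work is the elementary support bookkeeping described above.
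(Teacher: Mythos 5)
Your proof is correct and follows essentially the same route as the paper: both reduce to the JEP for \( \overline{\Gamma(M)} \) via Lemma~\ref{lem:dense}, choose witnesses in \( \Gamma(M) \) supported in tubes, displace using a tube-translation from Lemma~\ref{lem:tube-disjoint}, and verify membership in \( U \cap V^{\varphi} \) by support bookkeeping---the paper's \( \tau = g_1 g_2^{\sigma} \) (with \( \sigma \) pushing one tube off the union of two tubes) is just your \( w = f \circ (\varphi h \varphi^{-1}) \) with a single tube \( T_0 \), a cosmetic difference. The only slip is that in the second verification you need \( \varphi^{-1} f \varphi \) to be the identity on \( h(L_j) \) rather than on \( L_j \), but this is immediate since \( h(T_0) = T_0 \) and \( \varphi^{-1} f \varphi \) is the identity on all of \( T_0 \).
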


\begin{proof}
By Lemma~\ref{lem:dense}, \( \overline \Ht \) is an open finite-index normal subgroup of \( \Homeo(M) \). 
Therefore it suffices to show that \( \overline \Ht \) has the JEP, and hence the Rokhlin property.
Given nonempty open subsets \( U_1 \) and \( U_2 \) of \( \overline \Ht \), we need to find \( \sigma \in \overline\Ht \) such that \( U_1 \cap U_2^\sigma \neq \varnothing \). 
By shrinking the \( U_i \), we may assume that there exists compact subsets \( K_1^i, \ldots, K_{n_i}^i \) and precompact open subsets \( V_1^i, \ldots, V_{n_i}^i \) such that \( U_i = \bigcap_{j=1}^{n_i} U(K_j^i, V_j^i) \).
For \( i \in \{1,2\} \), choose \( g_i \in \Ht \cap U_i \), and let \( T_i \) be a tube containing the support of \( g_i \) as well as  \( V_j^i \) and \( K_j^i \) for each \( j \in \{1, \ldots, n_i\} \). 

By applying Lemma~\ref{lem:tube-disjoint} to a tube containing \( T_1 \cup T_2 \), there exists \( \sigma \in \Ht \) such that \( \sigma(T_2) \) is disjoint from \( T_1 \cup T_2 \). 
Let \( \tau = g_1g_2^\sigma \in\Ht \). 
Then,  using the fact that \( g_1 \) restricts to the identity on \( \sigma(V_j^2) \) and that \( g_2^\sigma \) restricts to the identity on \( K_j^1 \), we have:
\begin{enumerate}[(1)]
\item for \( j \in \{1, \ldots, n_1\} \), \[ \tau(K_j^1) = g_1g_2^\sigma(K_j^1) = g_1(K_j^1) \subset V_j^1 \] implying \( \tau \in U_1 \), and
\item for \( j \in \{1, \ldots, n_2\} \),
\[
\tau(\sigma(K_j^2)) = (g_1g_2^\sigma)(\sigma(K_j^2)) = g_1(\sigma(g_2(K_j^2))) \subset g_1( \sigma(V_j^2))  = \sigma(V^2_j)
\]
implying \( \tau \in U_2^\sigma \). 
\end{enumerate}
Therefore, \( \tau \in U_1 \cap U_2^\sigma \), establishing that  \( \overline\Ht \) has the JEP, and hence \( \Homeo(M) \) has the virtual Rokhlin property.
\end{proof}

We now move to proving the forwards direction of Theorem~\ref{thm:2}. 
When assuming our group has the virtual Rokhlin property, we are handed a finite-index open normal subgroup that we a priori know relatively little about.
The next lemma will aid in moving from working with this unknown subgroup to working with \( \Ht \).

\begin{Lem}
\label{lem:G-telescoping}
Let \( M \) be a weakly self-similar 2-manifold with exactly two maximally stable ends, let \( G \) be an open normal finite-index subgroup of  \( \Homeo(M) \), and let \( \Ht = \Ht(M) \).
Suppose \( a \) and \( b \) are simple closed curves in the same \( G \)-orbit. 
If either \( a \) does not separate the maximally stable ends of \( M \) or 
there exists \( g \in G \) and a simple closed curve \( c \) in \( M \) such that
\begin{enumerate}[(i)]
\item \( c \) separates the maximally stable ends of \( M \),  
\item \( a\cup b \) is contained in a component of \( M \ssm c \), 
\item \( g(a) = b \), and 
\item \( g \) restricts to the identity on \( c \), 
\end{enumerate}
then \( a \) and \( b \) are in the same \( \Ht \)-orbit.
\end{Lem}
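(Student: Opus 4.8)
The plan is to produce, under either hypothesis, a homeomorphism supported in a tube that carries \( a \) to \( b \); recall that \( \Ht(M) \) is exactly the set of homeomorphisms whose support lies in a tube, equivalently those that are the identity near both maximally stable ends \( \mu_1,\mu_2 \). Note first that the special structure of \( G \) will not really be used: I only need that \( a \) and \( b \) lie in a common \( \Homeo(M) \)-orbit and, in the second alternative, that some \( g\in\Homeo(M) \) satisfies (i)--(iv). The heart of the matter is a relative, boundary version of the transitivity used in Lemma~\ref{lem:self-similar_spacious}, which I would isolate as follows: if \( S \) is a surface with a single compact boundary curve whose interior caps off to a uniquely self-similar surface \( N \) with maximal end \( \mu \), and \( a,b\subset\mr S \) lie in a common orbit of the boundary-fixing group \( \Homeo(S,\partial S) \), then some homeomorphism of \( S \) supported in a compact subset of \( \mr S \) disjoint from a neighborhood of \( \mu \) carries \( a \) to \( b \). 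By Proposition~\ref{prop:uniform}, the closure \( M_0 \) of either component of \( M\ssm c \), for \( c \) separating \( \mu_1 \) from \( \mu_2 \), is such a surface, with \( \mu \) the unique maximal end it contains. Granting this sub-claim, the lemma follows quickly in both cases.

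In the case where the data \( c,g \) of (i)--(iv) are given: by (ii) and (iv), \( g \) restricts to a homeomorphism of the component \( M_0 \) of \( M\ssm c \) containing \( a\cup b \) that fixes \( \partial M_0=c \) pointwise and sends \( a \) to \( b \). Thus \( a \) and \( b \) lie in a common \( \Homeo(M_0,\partial M_0) \)-orbit, and the sub-claim produces \( \gamma_0\in\Homeo(M_0,\partial M_0) \) supported in a compact subset of \( \mr M_0 \) away from the maximal end contained in \( M_0 \). Extending \( \gamma_0 \) by the identity across \( c \) yields an element of \( \Homeo(M) \) supported in a tube, hence in \( \Ht(M) \), that carries \( a \) to \( b \), as desired.

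In the case where \( a \) does not separate \( \mu_1 \) from \( \mu_2 \): since any self-homeomorphism of \( M \) preserves the set \( \{\mu_1,\mu_2\} \) of maximally stable ends, the curve \( b \), being in the \( \Homeo(M) \)-orbit of \( a \), also fails to separate \( \mu_1 \) from \( \mu_2 \). Hence both \( a \) and \( b \) are disjoint from a neighborhood \( U \) of \( \mu_1 \), which I choose with connected boundary \( c \) separating the two ends; writing \( M_0 \) for the component of \( M\ssm c \) containing \( a\cup b \), a change-of-coordinates argument (using that the neighborhoods of a maximally stable end are all homeomorphic to \( N \) with a disk removed, so that excising \( U \) is well defined up to homeomorphism) shows that the \( \Homeo(M_0,\partial M_0) \)-orbit of a curve avoiding \( U \) and not separating the ends is determined by its \( \Homeo(M) \)-orbit. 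In particular \( a \) and \( b \) lie in a common \( \Homeo(M_0,\partial M_0) \)-orbit, and one concludes exactly as above. Throughout, in the orientable case I would work with orientation-preserving maps, composing with an orientation-reversing tube homeomorphism at the outset if \( g \) reverses orientation; this is harmless since \( \Ht(M) \) contains orientation-reversing elements.

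The main obstacle is the sub-claim itself, i.e.\ the boundary-relative transitivity inside \( M_0\cong N\ssm\mathrm{disk} \). I expect to prove it by the template of Lemma~\ref{lem:self-similar_spacious}: first establish that the group of homeomorphisms of \( M_0 \) fixing \( \partial M_0 \) and supported away from the maximal end \( \mu \) is dense in \( \Homeo(M_0,\partial M_0) \) (an adaptation to the bounded setting of \cite[Proposition~4.2]{LanierMapping}, see also \cite[Lemma~11.5]{VlamisHomeomorphism}); then use that ``sends \( a \) to a curve homotopic to \( b \)'' is an open condition, so that by density there is such a homeomorphism \( \gamma \) with \( \gamma(a) \) homotopic to \( b \); and finally correct \( \gamma \) by a compactly supported ambient isotopy of \( \mr M_0 \) from \( \gamma(a) \) to \( b \). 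The delicate points are keeping every approximating and correcting homeomorphism simultaneously boundary-fixing and supported away from \( \mu \), and the infinite-type change-of-coordinates bookkeeping required in the non-separating case.
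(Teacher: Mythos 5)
Your reduction skeleton (restrict \( g \) to the component \( M_0 \) of \( M \ssm c \), produce a map supported away from the maximal ends, extend by the identity) is sensible, but the load-bearing step---your boundary-relative transitivity sub-claim---is both unproven and, as literally stated, false. With ``supported in a compact subset of \( \mr M_0 \)'' it fails already for \( N \) the flute surface (sphere minus \( \omega+1 \), which is uniquely self-similar): in \( S \cong N \ssm \mathrm{disk} \) with isolated ends \( p_1, p_2, \ldots \to \mu \), let \( a \) cut off \( \{\partial S, p_1\} \) and \( b \) cut off \( \{\partial S, p_2\} \). These are in the same \( \Homeo(S,\partial S) \)-orbit, but a compactly supported homeomorphism fixes a neighborhood of every end, so it would carry the \( \partial \)-side of \( a \) (end set \( \{p_1\} \)) to the \( \partial \)-side of \( b \) (end set \( \{p_2\} \)) while fixing \( p_1 \)---a contradiction. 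You must therefore weaken to ``supported away from a neighborhood of \( \mu \)'' (which still extends to an element of \( \Ht(M) \)), but then the sub-claim is a genuine boundary-relative analogue of \cite[Proposition~4.2]{LanierMapping} that you only sketch; since the lemma must hold for \emph{all} weakly self-similar \( M \) with two maximally stable ends---including non-tame examples like the manifold \( X \) of Section~\ref{sec:example}---``I expect to prove it by the template'' leaves the hardest content unaddressed. The non-separating case has the same character of gap: your assertion that the \( \Homeo(M_0,\partial M_0) \)-orbit of a curve avoiding \( U \) is determined by its \( \Homeo(M) \)-orbit requires showing the complementary pieces match rel boundary, i.e., that deleting homeomorphic clopen subsets from the end space of \( M_0 \) yields homeomorphic complements---not automatic for a general Stone space, and precisely the ``bookkeeping'' you defer. (A minor but real error in an aside: \( \Ht(M) \) contains \emph{no} orientation-reversing elements when \( M \) is orientable, since an element supported in a tube is the identity on an open set; fortunately orientation is a non-issue in this lemma.)

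The paper's proof avoids both difficulties with two elementary devices you are missing. For curves not separating the maximally stable ends, it takes a \( T \)-translation \( \tau \) (Lemma~\ref{lem:translation_exists}) and matches \( a \) and \( b \) \emph{separately} to the common displaced curve \( \tau(a) \): with \( F_a \) the closure of the side of \( a \) missing the maximal ends, change of coordinates inside a regular neighborhood of \( F_a \cup \alpha \cup \tau(F_a) \) (where \( \alpha \) is an arc joining \( a \) to \( \tau(a) \)) yields \( f_a \in \Ht \) with \( f_a(a)=\tau(a) \), similarly \( f_b \), and \( f_b^{-1}\circ f_a \in \Ht \) carries \( a \) to \( b \); this needs only \( F_a \cong F_b \), which the orbit hypothesis supplies, and no rel-boundary orbit classification. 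For curves separating the maximal ends (first made disjoint via an intermediary curve when \( a \cap b \neq \varnothing \)), it uses a homeomorphism \( \sigma \) swapping the two sides of \( a \) (available by Proposition~\ref{prop:uniform}), sets \( d = \sigma(c) \), and checks by a finite end-space additivity computation that the tubes \( T_{a,c} \), \( T_{b,c} \), \( T_{d,a} \), \( T_{d,b} \) are pairwise homeomorphic---the hypothesis on \( g \) enters only through the restriction \( g|_{T_{a,c}} \colon T_{a,c} \to T_{b,c} \)---whence change of coordinates gives a homeomorphism supported in \( T_{d,c} \) mapping \( a \) to \( b \). To salvage your route you would have to actually prove the corrected rel-boundary density statement; the translation-and-side-swap arguments are the cheaper path and are why the paper's proof works uniformly, with no tameness assumption.
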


\begin{proof} 
Choose a tube \( T \) containing \( a \) and \( b \) in its interior. 
If \( a \) and \( b \) are non-separating, then there exists a homeomorphism supported in \( T \) mapping \( a \) onto \( b \) by the change of coordinates principle, and hence \( a \) and \( b \) are in the same \( \Gamma \)-orbit.
Now we may assume \( a \) and \( b \) are separating.
First, suppose \( a \) (and hence \( b \)) does not separate the maximally stable ends of \( M \).
By Lemma~\ref{lem:translation_exists}, there exists a \( T \)-translation in \( \Homeo(M) \), and taking a suitable power yields a \( T \)-translation  \( \tau \) in \( G \). 
Fix a tube \( T' \) containing \( T \) and \( \tau(T) \). 
Let \( F_a \) be the closure of the component of \( M \ssm a \) that is not a neighborhood of a maximally stable end of \( M \). 
Choose a simple path \( \alpha \) in \( T' \) from \( a \) to \( \tau(a) \), and let \( \Sigma \) be a regular neighborhood of \( F_a \cup \alpha \cup \tau(F_a) \). 
Then, by the change of coordinates principle, there exists a homeomorphism \( f_a \) of \( M \) supported in \( \Sigma \) mapping \( a \) onto \( \tau(a) \).
Similarly, there exists \( f_b \in \Gamma \) mapping \( b \) onto \( \tau(a) \). 
Therefore,  \( f_b^{-1} \circ f_a \in \Ht \) maps \( a \) onto \( b \). 

\begin{figure}
\centering
 \labellist
 \small\hair 2pt

  \pinlabel {\(\sigma(b)\)} [ ] at 120 -15
    \pinlabel {\(b\)} [ ] at 305 -15
      \pinlabel {\(a\)} [ ] at 211 -15
        \pinlabel {\(c\)} [ ] at 394 -15
          \pinlabel {\(\sigma(c)=d\)} [ ] at 28 -15

 \endlabellist  
\includegraphics[width=.7\textwidth]{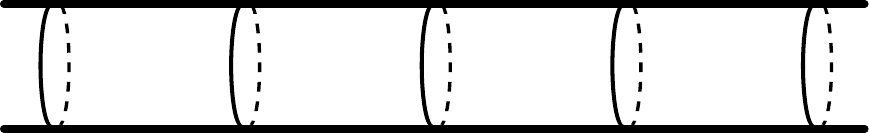}
\vskip.1in
\caption{The positioning of the curves in the proof of Lemma~\ref{lem:G-telescoping}.}
\label{fig:4.8}
\end{figure}

Now, we may assume that \( a \) (and hence \( b \)) separates the maximally stable ends of \( M \). 
For now, let us additionally assume that \( a \) and \( b \) are disjoint. 
Then, by our assumptions, there exists a simple closed curve \( c \) separating the maximally stable ends of \( M \), and a homeomorphism \( g \in G \) such that \( a \cup b \) is contained in a component of \( M \ssm c \), \( g(a) = b \), and \( g \) restricts to the identity on \( c \). 
By possibly relabelling \( a \) and \( b \) and replacing \( g \) with \( g^{-1} \), we may assume that \( b \) separates \( a \) and \( c \). 
Since the components of \( M \ssm a \) are homeomorphic, there exists a homeomorphism \( \sigma \co M \to M \) that permutes the components of \( M \ssm a \); let \( d = \sigma(c) \).
Let \( T_{x,y} \) be the tube co-bounded by \( x \) and \( y \) for distinct \( x,y \in \{a,b, c, d\} \). 
Then \( T_{a,c} \), \( T_{b,c} \), \( T_{d,a} \), and \( T_{d,b} \) are pairwise homeomorphic.  
Indeed, the restriction of \( \sigma \) to \( T_{a,c} \) is a homeomorphism between \( T_{a,c} \) and \( T_{d,a} \).
The restriction of \( g \) to \( T_{a,c} \) is a homeomorphism from \( T_{a,c} \) to \( T_{b,c} \).
The end space of \( T_{d,b} \) is the disjoint union of the end space of \( T_{d,a} \) and the end space of \( T_{a,b} \), which is homeomorphic to the disjoint union of the end spaces of \( T_{b,c} \) and \( T_{a,b} \), and hence homeomorphic to the end space of \( T_{a,c} \); therefore, by the classification of surfaces, \( T_{d,b} \) is homeomorphic to \( T_{a,c} \). 
Therefore, by the change of coordinates principle, there exists a homeomorphism \( M \to M \) supported in \( T_{d, c} \) mapping \( a \) onto \( b \). 

To finish, note that if \( a \cap b \neq \varnothing \), then we can simply choose a simple closed curve \( \gamma \) disjoint from both \( a \) and \( b \) and use the previous case to find homeomorphisms \( f_a, f_b\co M \to M \) supported in tubes such that \( f_a(a) = \gamma \) and \( f_b(b) = \gamma \), so that \( f_b^{-1} \circ f_a \co M \to M \) is a homeomorphism supported in a tube that maps \( a \) onto \( b \). 
\end{proof}

It is left for us to establish the forward direction of Theorem~\ref{thm:2}. 
The outline of the argument is as follows: we are handed a finite-index subgroup \( G \) of \( \Homeo(M) \) with the Rokhlin property. 
Given two simple closed curves \( a \) and \( b \) in the same \( G \)-orbit, we can take the open set \( U \) in \( \Homeo(M) \) consisting of elements that map \( a \) to a curve homotopic to \( b \).
We can then choose (with some care) another open subset \( V \) in \( \Homeo(M) \) consisting of elements that fix the homotopy class of some simple closed curve \( c \) that separates the maximally stable ends of \( M \) and is disjoint from, but does not separate, \( a \) and \( b \). 
Then, by invoking the Rokhlin property, there exists an element \( \sigma \in G \) such that \( U \cap V^\sigma \) is nonempty. 
An element in this intersection must---up to homotopy---fix \( \sigma(c) \) and send \( a \) to \( b \). 
If \( \sigma(c) \) is disjoint from \( a \) and \( b \) and does not separate \( a \) and \( b \), then we can apply Lemma~\ref{lem:G-telescoping} to see that \( a \) and \( b \) are in the same \( \Gamma \)-orbit.
Therefore, the main hurdle we  have to deal with below is showing that \( \sigma(c) \) satisfies this condition. 
To do so, we make use of a pseudo-Anosov homeomorphism supported on a subsurface, its action on the curve graph, and subsurface projections. 
For the relevant background, we refer the reader to \cite[Sections~2.3.2~\&~2.3.3]{LanierMapping}; we note that \( \cc(S) \) denotes the curve graph of the surface \( S \). 

\begin{proof}[Proof of Theorem~\ref{thm:2}] 
We established the reverse direction in Lemma~\ref{lem:jep}.
For the forwards direction, assume that \( \mcg(M) \) has the virtual Rokhlin property, which is weaker than assuming \( \Homeo(M) \) has the virtual Rokhlin property. 
First note that if \( \mcg(M) \) is finite, then \( M \) is homeomorphic to \( \mathbb S^1 \times \br \) and hence spacious. 
So, we may assume that \( \mcg(M) \) is infinite. 
Let \( G \) be an open finite-index subgroup of \( \mcg(M) \) with the Rokhlin property; let \( n \) be the index of \( G \) in \( \mcg(M) \).  
As we saw in the proof of Proposition~\ref{prop:trivial}, \( G \) is normal in \( \mcg(M) \). 
If \( M \) is orientable (resp., non-orientable), let \( \widetilde G \) be the preimage of \( G \) under the canonical projection from  \( \Homeo^+(M) \) (resp., \( \Homeo(M) \)) to \( \mcg(M) \). 

As \( \widetilde G \) is finite index in \( \Homeo(M) \), we have that \( \widetilde G \backslash ( \Homeo(M)\cdot d) \) is finite for any simple closed curve \( d \) in \( M \). 
Therefore, showing the \( \widetilde G \)-orbit and \( \Ht \)-orbit agree for any simple closed curve on \( M \) will establish that \( M \) is spacious. 
Fix two simple closed curves \( a \) and \( b \) in \( M \) that are in the same \( \widetilde G \)-orbit. 
If \( a \) (and hence \( b \)) does not separate the maximally stable ends, by Lemma~\ref{lem:G-telescoping}, there is an ambient homeomorphism of \( M \) supported in a tube mapping \( a \) onto \( b \). 
So, let us assume that \( a \) (and hence \( b \)) separate the maximally stable ends of \( M \). 
For the time being, let us further assume that \( a \) and \( b \) are disjoint. 

Let \( \Sigma \) be a subsurface of \( M \) satisfying the following properties:
\begin{itemize}
\item \( \Sigma \) is connected, compact, and orientable,
\item each boundary component of \( \Sigma \) is separating and in minimal position with \( a \), 
\item \( \Sigma \) separates the maximally stable ends of \( M \), and
\item \( \Sigma \) admits a pseudo-Anosov homeomorphism (for instance, it is enough to require  \( \Sigma \) to have at least four boundary components). 
\end{itemize}
Let \( f \) be a pseudo-Anosov homeomorphism of \( \Sigma \) fixing  \( \partial \Sigma \) point-wise and such that  \( d_{\mathcal C(\Sigma)}(f\cdot \eta, \eta) > 2 \) for any \( \eta \in \mathcal C(\Sigma) \) (this can be accomplished by taking a suitably high power of any pseudo-Anosov homeomorphism of \( \Sigma \)). 
Extending \( f \) by the identity to the complement of \( \Sigma \) in \( M \), we will view \( f \in \Homeo(M) \). 

Let  \( \alpha \) and \( \beta \) denote the homotopy classes of \( a \) and \( b \), respectively.
The set \[ U_{\alpha,\beta} = \{ \vp \in G : \vp(\alpha) = \beta \} \] and, viewing \( \mathcal C(\Sigma) \) as a subset of \( \mathcal C(M) \), the set
\[ U_\Sigma = \{ \vp \in G : \vp(\eta) = \eta \text{ for every } \eta \in \mathcal C(\Sigma) \} \]
are each open in \( G \). 
Let \( U = U_{\alpha,\beta}  \) and \( V = f \cdot U_\Sigma \). 
Then, as \( G \) has the JEP, there exists \( \sigma \in G \) such that \( U \cap V^\sigma \neq\varnothing \).
Choose a lift \( \tilde \sigma \in \Homeo(M) \) such that each boundary component of  \( \tilde \sigma(\Sigma) \) is in minimal position with \( a \).

Let \( g \in \widetilde G \) be a representative of an element in \( U \cap V^\sigma \) such that \( g|\tilde\sigma(\Sigma) = f^{\tilde\sigma} |\tilde\sigma(\Sigma) \). 
We claim that \( a \) is disjoint from \( \tilde\sigma(\Sigma) \).
To see this, suppose that \( a \cap \tilde\sigma(\Sigma) \neq \varnothing \). 
As \( a \) is in minimal position with the components of \( \partial \tilde\sigma(\Sigma) \) and as \( g  \) maps each component of \( \partial \tilde\sigma(\Sigma) \) to itself, we have that \( g(a) \) and the components of \( \partial \tilde\sigma(\Sigma) \) are in minimal position. 
Therefore, we can project each of \( a \) and \( g(a) \) to \( \cc(\tilde\sigma(\Sigma)) \). 
As \( g(a) \) is homotopic to \( b \) and \( a \cap b = \varnothing \), we have that the distance in \( \cc(\tilde\sigma(\Sigma)) \) between any projection of \( a \) and any projection of \( g(a) \) is bounded by 2. 
This implies that if \( \alpha' \) is a projection of \( a \) to \( \cc(\tilde\sigma(\Sigma)) \), then, as \( g\cdot \alpha' \) is a projection of \( g(a) \), \( d_{\cc(\tilde\sigma(\Sigma))}(\alpha', g\cdot \alpha') \leq 2 \). 
But, as \( g\cdot \alpha' = f^{\tilde\sigma}\cdot \alpha' \), this contradicts the fact that \( d_{\cc(\tilde\sigma(\Sigma))} ( \alpha', g\cdot \alpha') = d_{\cc(\tilde\sigma(\Sigma))}(\alpha', f^{\tilde\sigma} \cdot \alpha') > 2 \); hence, \( a \) is disjoint from \( \tilde\sigma(\Sigma) \). 

As \( g \) fixes \( \tilde\sigma(\Sigma) \) as a set, we can deduce that \( g(a) \) is disjoint from \( \tilde\sigma(\Sigma) \) as well. 
Therefore, after a homotopy, we may assume that both \( a \) and \( b \) are disjoint from \( \tilde\sigma(\Sigma) \), \( g(a) = b \), and \( g \) fixes each component of \( \partial \tilde\sigma(\Sigma) \) pointwise. 
As each component of \( \partial \tilde\sigma(\Sigma) \) is separating and \( \tilde\sigma(\Sigma) \) separates the maximally stable ends of \( M \), there exists a component \( c \) of \( \partial \tilde\sigma(\Sigma) \) separating the maximally stable ends of \( M \).
By construction, \( a \), \( b \), \( c \), and \( g \) satisfy the hypotheses of Lemma~\ref{lem:G-telescoping}, and hence \( a \) and \( b \) are in the same \( \Gamma \)-orbit. 

To finish, note that if \( a \) and \( b \) are not disjoint, then using the fact that \( M \) is weakly self-similar and \( \widetilde G \) is finite index in \( \Homeo(M) \), there exists \( h \in \widetilde G \) such that \( h(a) \) is disjoint from both \( a \) and \( b \).
Then applying the previous case, we find that \( a \) and \( b \) are each in the same \( \Ht \)-orbit as \( h(a) \), and hence in the same \( \Ht \)-orbit as each other. 
\end{proof}


\section{Detecting spaciousness} 
\label{sec:detection}

By the classification of 2-manifolds, a 2-manifold is determined by its genus and the topology of its end space.  
We would therefore like to give a condition for a 2-manifold to be spacious in terms of this data.
This has the added benefit of providing a more concrete process for detecting spacious than is provided by the definition.
We are only able to accomplish this under an additional tameness assumption, and we show that some additional tameness assumption is required for our statement. 
To get started, we first need to recall the preorder on the space of ends introduced by Mann--Rafi \cite{MannRafi}.

Let \( M \) be a 2-manifold.  
Given a subsurface \( \Sigma \) of \( M \) with compact boundary,  let \( \widehat \Sigma \) denote the ends of \( M \) for which \( \Sigma \) is a neighborhood.
Given a homeomorphism \( f \co M \to M \),  let \( \widehat f \co \Ends(M) \to \Ends(M) \) denote the induced homeomorphism. 
Define the binary relation \( \preceq \) on \( \Ends(M) \) as follows: given \( e,e' \in \Ends(M) \),  write \( e \preceq e' \) if for every open neighborhood \( \sU \) of \( e' \) in \( \Ends(M) \) there exists an open neighborhood \( \sV \) of \( e \) in \( \Ends(M) \) and a homeomorphism \( f \in \Homeo(M) \) such that \( \widehat f(\sV) \subset \sU \).  
It readily follows that \( \preceq \) is a preorder on \( \Ends(M) \). 
An end \( \mu \in \Ends(M) \) is \emph{maximal} if \( \mu \preceq e \) implies \( e \preceq \mu \). 
It is easily verified that a maximally stable end of a weakly self-similar 2-manifold is maximal. 
Given two ends  \( e, \mu\in\Ends(M) \), we say that \( e \) is an \emph{immediate predecessor} of \( \mu \) if \( e \prec \mu \) and \( e \prec e' \preceq \mu \) implies \( \mu \preceq e' \). 
Two ends \( e \) and \( e' \)  satisfying \( e \preceq e' \) and \( e' \preceq e \) are said to be of the \emph{same type}\footnote{Though not directly relevant, we note that two ends are of the same type if and only if they are in the same \( \Homeo(M) \)-orbit \cite{MannRafiMonster}. We will only use this fact in the case that \( e \) is stable, which is significantly easier to establish  (see \cite[Lemma~4.2]{VlamisHomeomorphism}).}.

An end \( e \in \Ends(M) \) is \emph{stable} if it admits a neighborhood basis consisting of pairwise-homeomorphic clopen subsets, which we call a \emph{stable neighborhood basis}. 
A clopen subset \( \sU \) in \( \Ends(M) \) is \emph{stable} if it there exists a stable end \( e \) admitting a stable neighborhood basis containing \( \sU \) as an element; we say \( e \) has \( \sU \) as a \emph{stable neighborhood}\footnote{The notion of stable sets and their basic properties were introduced by Mann--Rafi \cite{MannRafi}; however, we will reference \cite{VlamisHomeomorphism} for a concise treatment that is well suited to our purposes.}.

\begin{Def}[Upwardly stable]
\label{def:upward_stable}
The end space \( \Ends \) of a 2-manifold \( M \) is \emph{upwardly stable}  if every maximal end of \( M \) is stable, and if, for any non-maximal end \( e \) of \( M \), there exists a stable clopen subset of \( \Ends \) that contains \( e \)  and is disjoint from the maximal ends of \( M \) (note that \( e \) need not be stable itself). 
A 2-manifold is \emph{upwardly stable} if its end space is. 
\end{Def}

An example of a 2-manifold that fails to be upwardly stable is constructed in Section~\ref{sec:example}. 
Let \( M \) be a weakly self-similar 2-manifold.
If \( M \)  upwardly stable, then \( M \) is tame in the sense of Mann--Rafi \cite{MannRafi}, justifying our claim in the introduction that upward stability is a tameness condition.

We can now give a detection scheme for spaciousness.
Before  restating Theorem~\ref{thm:characterize} from the introduction, we remind the reader that every non-planar weakly self-similar 2-manifold has infinite genus; in particular, it has a non-planar end, and if the manifold is not orientable, it has a non-orientable end (i.e., an end that does not admit an orientable neighborhood in the manifold).

\begin{MainThm2}
Let \( M \) be a upwardly stable weakly self-similar 2-manifold with exactly two maximally stable ends.
Then, \( M \) is spacious if and only if none of the following conditions are met:
\begin{enumerate} [(i)]
\item a maximally stable end of \( M \) has an immediate predecessor with countable \( \Homeo(M) \)-orbit.
\item \( M \) is orientable, infinite genus, and only the maximal ends are non-planar. 
\item \( M \) is not orientable, infinite genus, and only the maximal ends are not orientable. 
\end{enumerate}
\end{MainThm2}

The proof of Theorem~\ref{thm:characterize} will rely heavily on the following properties of stable sets (see \cite[Section~4]{VlamisHomeomorphism}).

\begin{Lem}
\label{lem:structure}
Let \( M \) be a 2-manifold, and let \( \sU \subset \Ends(M) \) be a clopen stable neighborhood of a stable end \( e \).
\begin{enumerate}[(1)]
\item
If \( \sV \) is a clopen neighborhood of \( e \) contained in \( \sU \), then \( \sV \) is homeomorphic to \( \sU \). 
\item
If \( \mathscr W \) is a clopen subset of \( \Ends(M) \) that is homeomorphic to an open subset of \( \sU \ssm \{e\} \), then \( \sU \cup \mathscr W \) is homeomorphic to \( \sU \). \qed
\end{enumerate}
\end{Lem}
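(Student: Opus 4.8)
The plan is to reduce both parts to a single \emph{absorption} statement and then prove that statement by a Cantor--Schr\"oder--Bernstein telescoping argument. Fix a stable neighborhood basis $\sU = W_0 \supseteq W_1 \supseteq \cdots$ of $e$ consisting of pairwise-homeomorphic clopen sets with $\bigcap_{n} W_n = \{e\}$, and write $C_n = W_n \ssm W_{n+1}$ for the clopen ``collars.'' The elementary engine is that $W_n$ absorbs a disjoint copy of its own collar: choosing a homeomorphism $\beta \co W_n \to W_{n+1}$, the map $\mathrm{id}_{C_n} \sqcup \beta$ is a homeomorphism $C_n \sqcup W_n \to C_n \sqcup W_{n+1} = W_n$, and more generally $(W_0 \ssm W_n) \sqcup W_0 \cong W_0$ for every $n$. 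The statement I would isolate is \textbf{(ABS)}: \emph{if $A$ is clopen and homeomorphic to a clopen subset of $\sU \ssm \{e\}$, then $\sU \sqcup A \cong \sU$} (and likewise for each $W_n$ in place of $\sU$, these being stable neighborhoods as well).

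Both parts follow quickly from (ABS). For part (1), given $W_k \subseteq \sV \subseteq \sU$, write $\sV = W_k \sqcup B$ with $B = \sV \ssm W_k$ clopen and disjoint from $W_k$; since $W_k$ is a stable neighborhood and $B$ is a clopen subset of $\sU \ssm \{e\}$ (hence of $W_k \ssm \{e\}$ after the $e$-fixing identification $\sU \cong W_k$), (ABS) gives $\sV \cong W_k \cong \sU$. For part (2), after replacing $\mathscr W$ by $\mathscr W \ssm \sU$ I may assume $\mathscr W \cap \sU = \varnothing$, so $\sU \cup \mathscr W = \sU \sqcup O$ where $O \cong \mathscr W$ is homeomorphic to a clopen subset of $\sU \ssm \{e\}$; since that subset is compact and $\bigcap_n W_n = \{e\}$, a finite-intersection argument places it inside $\sU \ssm W_N$ for some $N$, and (ABS) yields $\sU \sqcup O \cong \sU$.

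To prove (ABS) I would set up the back-and-forth. Taking $A$ disjoint from $\sU$ and realized inside $\sU \ssm W_N$, there are two clopen embeddings: the inclusion $\sU \hookrightarrow \sU \sqcup A$ as the $\sU$-summand, and the map $\mathrm{id}_A \sqcup \beta_N \co A \sqcup \sU \to A \sqcup W_N \subseteq \sU$, where $\beta_N \co \sU \to W_N$ is a homeomorphism. The usual Cantor--Schr\"oder--Bernstein construction then produces a bijection $\sU \sqcup A \to \sU$ that is, locally, either the identity or $\beta_N$, and hence a homeomorphism away from the set where the two cascades accumulate.

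The main obstacle --- and the one place where stability must really be used --- is continuity of this bijection at the end $e$ itself: the back-and-forth map is governed by the forward iterates of the self-embedding $g \circ f = \beta_N$, and the pieces glue continuously exactly when $\beta_N$ fixes $e$ and contracts to it, i.e. $\bigcap_{m} \beta_N^m(\sU) = \{e\}$. Producing such a contracting, $e$-fixing self-embedding is the nontrivial structural input: it amounts to upgrading the bare pairwise-homeomorphic basis to a \emph{shift} along a subsequence $\{W_{m_j}\}$, so that $\beta_N$ carries each $W_{m_j}$ into $W_{m_{j+1}}$ and the honest contraction $\bigcap_j W_{m_j} = \{e\}$ forces the accumulation set to be $\{e\}$. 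I would establish this from the structure theory of stable ends and stable sets in \cite[Section~4]{VlamisHomeomorphism}; granting it, the Cantor--Schr\"oder--Bernstein map has $\{e\}$ as its only accumulation point and, because every map in sight fixes $e$, is continuous there, completing the proof of (ABS) and hence of both parts of the lemma.
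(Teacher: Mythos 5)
There is no internal proof to measure your attempt against: the paper states this lemma with a qed symbol appended and imports it wholesale from \cite[Section~4]{VlamisHomeomorphism}, so your argument has to stand on its own. Your reductions are fine as far as they go: in (2), compactness of \( \mathscr W \) does upgrade ``homeomorphic to an open subset of \( \sU \ssm \{e\} \)'' to ``homeomorphic to a clopen subset avoiding some \( W_N \),'' and granting your key input, the Cantor--Schr\"oder--Bernstein telescope for (ABS) is sound, with continuity at \( e \) handled exactly as you say.

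The genuine gap is the key input itself: the existence of a self-embedding \( \beta_N \co \sU \to W_N \) with \( \beta_N(e) = e \) and \( \bigcap_m \beta_N^m(\sU) = \{e\} \). This is not a technical footnote that ``follows from the structure theory''---it is essentially a pointed form of part (1), i.e., of the very lemma being proved, and the place you defer it to is the same Section~4 that the paper cites for the entire lemma, so as written your proof is circular by citation. Moreover, your sketched construction (a shift with \( \beta_N(W_{m_j}) \subseteq W_{m_{j+1}} \)) does not obviously go through from the bare definition of stability. The definition only supplies \emph{some} homeomorphisms \( \sU \to W_n \), which need not fix \( e \); this also undercuts your part (1) reduction, where you invoke an ``\( e \)-fixing identification \( \sU \cong W_k \)'' to move \( B \) into \( W_k \ssm \{e\} \)---without the \( e \)-fixing property the image of \( B \) may contain \( e \). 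A genuinely bijective shift would require the collars \( W_{m_j} \ssm W_{m_{j+1}} \) to be mutually homeomorphic or at least absorbable, which stability does not provide: the collars can be pairwise non-homeomorphic. And even an \( e \)-fixing homeomorphism \( \beta \co \sU \to W_1 \) is not enough for your continuity argument: its iterates need not contract to \( e \) (if \( \beta \) restricts to the identity on a clopen \( Z \subseteq W_1 \ssm \{e\} \), then \( Z \subseteq \bigcap_m \beta^m(\sU) \)), and the CSB bijection can then fail to be continuous along the accumulation set, where the identity and \( \beta \) branches disagree. In short, you have correctly located where all the difficulty of the lemma lives, but the proposal leaves that step unproved, and the heuristic offered for it runs into concrete obstructions.
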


\begin{proof}[Proof of Theorem~\ref{thm:characterize}]
Let us consider the backwards implication first, meaning we will assume one of the three conditions above are met. 
Let us consider the first case; the other two are similar.
Let \( e \in \Ends(M) \) be an immediate predecessor of a maximally stable end of \( M \) that has countable \( \Homeo(M) \)-orbit, denoted \( \mathscr O \). 
Fix a tube \( T  \) in \( M \) such that \( \widehat T \cap \mathscr O \neq \varnothing \). 
Let \( T' \) be any tube containing \( T \). 
As \( e \) is an immediate predecessor of a maximally stable end, it follows that \( \widehat T' \cap \mathscr O \)  is finite.
Therefore, there is no element of \( \Homeo(M) \) supported in \( T' \) that displaces \( T \), and as \( T' \) was an arbitrary tube containing \( T \), Lemma~\ref{lem:tube-disjoint} guarantees that \( M \) is not spacious.

Now, we consider the forwards direction: assume that none of the conditions are met. 
We must show that \( M \) is spacious.
If \( M \) has only two ends, then \( M \) is homeomorphic to \( \mathbb S^1 \times \br \) and hence is spacious; we may therefore assume that \( M \) has more than two ends.  
We will in fact establish the slightly stronger fact that \( \Gamma(M) \cdot a = \Homeo(M)\cdot a \) for every simple closed curve \( a \) on \( M \), and hence \( \Gamma(M) \backslash \Homeo(M) \cdot a \) is a singleton for each \( a \). 
If \( a \) does not separate the maximally stable ends, then this follows from Lemma~\ref{lem:G-telescoping}.
So it is left to consider the case where \( a \) separates the maximally stable ends of \( M \).
Recall that \( \Homeo(M) \cdot a \) consists of the curves that separate the maximally stable ends of \( M \).
Let \( c \in \Homeo(M) \cdot a \). 
We will show that \( c \in \Gamma(M) \cdot a \). 

By the hypotheses on \( M \), there exists a curve \( b \) in \( \Homeo(M)\cdot a \) disjoint from both \( a \) and \( c \) such that \( a \) and \( b \), as well as \( c \) and \( b \), co-bound a surface that is infinite genus if \( M \) is infinite genus and contains infinitely many isolated ends if \( M \) does.
We will prove that \( a \) and \( b \) are in the same \( \Gamma(M) \)-orbit.
An identical proof with \( a \) replaced by \( c \) shows that \( c \) and \( b \) are in the same \( \Gamma(M) \)-orbit, and hence \( a \) and \( c \) are in the same \( \Gamma(M) \)-orbit as desired.

Let \( T \) be the tube co-bounded by \( a \) and \( b \), and let \( \sU_b \subset \Ends(M) \) be the clopen subset associated to the component of \( M \ssm b \) containing \( a \).
Let \( e \in \widehat T \). 
As \( e \) is not maximal, by upward stability there exists a non-maximal stable end \( \pi_e \) such that \( e \preceq \pi_e \); moreover, as every end is comparable to the maximally stable ends, \( \pi_e \) can be chosen to lie in \( \sU_b \ssm \widehat T \). 
If \( \pi_e \) is an immediate predecessor of the maximally stable ends, then by assumption, \( \pi_e \) is stable and the \( \Homeo(M) \)-orbit of \( \pi_e \) is uncountable, and hence is perfect (see \cite[Lemma~4.4]{VlamisHomeomorphism}).
If \( \pi_e \) is not an immediate predecessor, then by possibly replacing \( \pi_e \) with an end higher in the preorder, we can assume that \( e \prec \pi_e \). 
In either case, by Lemma~\ref{lem:structure}(2), there exists a clopen neighborhood of \( e \) whose union with any stable neighborhood of 
\( \pi_e \) is again a stable neighborhood of \( \pi_e \). 
Therefore, \( \pi_e \) admits a stable neighborhood \( \sU_e \) containing \( e \). 

Using the compactness of \( \widehat T \), there exists \( e_1, \ldots, e_k \in \widehat T \) such that \( T \subset \sU_1 \cup \cdots \cup \sU_k \), where \( \sU_i = \sU_{e_i} \). 
By Lemma~\ref{lem:structure}(1), \( \sU_i \cap \sU_b \) is homeomorphic to \( \sU_i \) and hence stable with respect to \( e_i \).
In particular, without loss of generality, we may assume that \( \sU_i \subset \sU_b \).
Let \( \pi_i = \pi_{e_i} \).
We will now recursively edit and prune our choices of the \( \pi_i \) so that we may assume that \( \pi_i \) and \( \pi_j \) are incomparable for \( i \neq j \).

Fix \( i \in \{1, \ldots, k\} \), and assume that \( \pi_{j'} \not\preceq \pi_j \ \) for  all \( j < i \) and \( j' \in \{1, \ldots, k\} \). 
Define the set \( J_i \) by \( j \in J_i \) if and only if   \( \pi_j \preceq \pi_i \). 
First suppose that \( \pi_i \) is comparable to an immediate predecessor \( \pi \) of the maximally stable ends (possibly \( \pi = \pi_i \)).
Without loss of generality, we may assume \( \pi \in \sU_b \ssm \widehat T \).
If \( \pi \neq \pi_i \), then replace \( \pi_i \) with \( \pi \), replace \( \sU_i \) with the union of \( \sU_i \) and a stable clopen neighborhood of \( \pi \), and update \( J_i \) if necessary.
Then, using the finite cardinality of \( J_i \), upward stability, and Lemma~\ref{lem:structure}(2), we have that \( \sU_i \) and \( \sU_i \cup \bigcup_{ j \in J_i } \sU_j \) are homeomorphic. 
Hence, by replacing \( \sU_i \) with \( \sU_i \cup \bigcup_{ j \in J_i } \sU_j \) and forgetting each of the \( \pi_j \) for \( j \in J_i \), we have that \( \pi_j \not\preceq \pi_i \) for \( j \neq i \). 
Now, suppose \( \pi_i \) is not comparable to an immediate predecessor of the maximally stable ends.
Using upward stability, there exists a non-maximal stable end \( \pi \in \sU_b \ssm \widehat T \) such that \( \pi_i \prec \pi \), and hence \( \pi_j \prec \pi \) for each \( j \in J_i \).
Replace \( \pi_i \) with \( \pi \), replace \( \sU_i \) with the union of \( \sU_i \) and a stable clopen neighborhood of \( \pi \), and update \( J_i \).
This new \( \pi_i \) is again not comparable to an immediate predecessor of the maximally stable, and hence we can repeat this process with the updated \( \pi_i \).
The choice of \( \pi_i \) stabilizes after finitely many steps. 
We now have that \( \pi_j \prec \pi_i \) for each \( j \in J_i \), and we see that \( \sU_i \cup \bigcup_{ j \in J_i } \sU_j \) is homeomorphic to \( \sU_i \) by Lemma~\ref{lem:structure}(2). 
In particular, as before, we may forget the \( \pi_j \) with \( j \in J_i \) and replace \( \sU_i \) with \( \sU_i \cup \bigcup_{ j \in J_i } \sU_j \).
Therefore, \( \pi_j \not\preceq \pi_i \) for \( j \neq i \). 
After completing this recursion over values of \( i \), no  two of the \( \pi_i \) are comparable.

Observe that if an end \( e' \) is in the stable neighborhood of a stable end \( e \), then \( e' \preceq e \).
Therefore, as the \( \pi_i \) are stable and pairwise incomparable, we must have that \( \pi_i \not\in \sU_j \) whenever \( i \neq j \). 
Therefore, recursively replacing \( \sU_i \) with \( \sU_i \ssm (\sU_{i+1} \cup \cdots \cup \sU_k) \) (which is homeomorphic to \( \sU_i \) by Lemma~\ref{lem:structure}), we may assume that \( \sU_i \) is a stable neighborhood of \( \pi_i \), \( \sU_i \cap \sU_j = \varnothing \) whenever \( i \neq j \), and \( \widehat T \subset \bigcup_{i=1}^k \sU_i \). 
Let \( d \) be a simple closed curve separating the maximally stable ends of \( M \) that is disjoint from \( T \) and co-bounds a tube \( T_b\) with \( b \) such that \( a \subset T_b \) and such that \( \widehat T_b = \sU_1 \cup \cdots \cup \sU_k \). 
Let \( T_a \) be the tube co-bounded by \( a \) and \( d \). 
Note that the hypotheses on the curves \( a \) and \( b \) imply that both \( T_a \) and \( T_b \) have infinite genus if \( M \) is infinite genus and have infinitely many isolated ends if \( M \) does; in particular, the homeomorphism class of \( T_a \) and \( T_b \) are each completely determined by the homeomorphism class of their end spaces.

Letting \( \sV_i = \sU_i \cap \widehat T_a\), we see that \( \sV_i \) is a clopen subset of \( \sU_i \) containing \( \pi_i \) and hence, by stability, homeomorphic to \( \sU_i \). 
Therefore, as \( \widehat T_b \) is the topological disjoint union of the \( \sU_i \) and \( \widehat T_a \) is the topological disjoint union of the \( \sV_i \), we must have that \( \widehat T_a \) and \( \widehat T_b \) are homeomorphic. 
Moreover, the closure of each complement of \( a \) is homeomorphic to the closure of each complement of \( b \).
This allows us to apply the change of coordinates principle to find a homeomorphism \( M \to M \) that fixes \( c \) pointwise and maps \( a \) onto \( b \). 
In particular, \( a \) and \( b \) are in the same \( \Gamma(M) \)-orbit by Lemma~\ref{lem:G-telescoping}; hence, \( M \) is spacious. 
\end{proof}

\subsection{Countable end spaces}
\label{sec:countable}

Let \( M \) be a non-compact 2-manifold with countable end space and that is either (1) planar, (2) orientable with no planar ends, or (3) not orientable with no orientable ends. 
Under these hypotheses, the end space \( \Ends(M) \) of \( M \) is homeomorphic to an ordinal space of the form \( \omega^\alpha \cdot n + 1 \), where \( n \in \bn \) and \( \alpha \) is a countable ordinal.
The ordinal \( \alpha \) is the \emph{Cantor--Bendixson rank} of \( \Ends(M) \) and \( n \) is the \emph{Cantor--Bendixson degree} of \( \Ends(M) \).
We then have that \( M \) is weakly self-similar if and only if the Cantor--Bendixson degree of \( \Ends(M) \) is at most two, with \( M \) being (uniquely) self-similar exactly when the Cantor--Bendixson degree is one.
In particular, \( M \) has exactly two maximally stable ends exactly when the Cantor--Bendixson degree of \( \Ends(M) \) is two. 
Moreover, \( M \) is upwardly stable. 

Given this setup, as a corollary to Lemma~\ref{lem:self-similar_spacious} and Theorem~\ref{thm:characterize}, we give a straightforward characterization of spaciousness in the setting of countable ends. 
Observe that if \( M \) as just described has end space with Cantor--Bendixson degree two, then \( M \) automatically fails conditions (ii) and (iii) on Theorem~\ref{thm:characterize}; moreover, condition (i) is met if and only if the Cantor--Bendixson rank of \( \Ends(M) \) is of the form \( \alpha+1 \) for some successor ordinal \( \alpha \). 
We have established the following:

\begin{Cor}
\label{cor:countable}
Let \( M \) be a non-compact 2-manifold with countable end space and such that \( M \) is either planar, orientable with no planar ends, or not orientable with no orientable ends.
Then, \( M \) is spacious if and only if either (1) the Cantor--Bendixson degree of \( \Ends(M) \) is one or (2) the Cantor--Bendixson degree is two and the Cantor--Bendixson rank of \( \Ends(M) \) is of the form \( \alpha+1 \) for some limit ordinal \( \alpha \).  
\qed
\end{Cor}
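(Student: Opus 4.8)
The plan is to read off the corollary from Lemma~\ref{lem:self-similar_spacious} and Theorem~\ref{thm:characterize}, organizing the argument by the Cantor--Bendixson degree \( n \) of \( \Ends(M) \). By the setup recalled just above the statement, \( \Ends(M) \cong \omega^\alpha\cdot n + 1 \) with \( \alpha \) the Cantor--Bendixson rank, \( M \) is weakly self-similar exactly when \( n \le 2 \), it is (uniquely) self-similar exactly when \( n = 1 \), and it is always upwardly stable. Since spaciousness includes weak self-similarity by definition, the case \( n \ge 3 \) is immediate: such an \( M \) is not weakly self-similar, hence not spacious, and it satisfies neither alternative, so both sides of the biconditional are false. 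This reduces matters to \( n \in \{1,2\} \).

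For \( n = 1 \) I would simply invoke Lemma~\ref{lem:self-similar_spacious}: being non-compact and uniquely self-similar, \( M \) is spacious, which is exactly alternative (1). So it remains to treat \( n = 2 \), where \( M \) is an upwardly stable weakly self-similar 2-manifold with exactly two maximally stable ends; Theorem~\ref{thm:characterize} then applies, and \( M \) is spacious if and only if none of conditions (i)--(iii) holds. I would next dispose of (ii) and (iii) using the trichotomy in the hypothesis. If \( M \) is planar it has genus zero, so it is not infinite genus and (ii) fails, while planarity forces orientability and so (iii) fails. If \( M \) is orientable with no planar ends, then every end---not only the maximal ones---is non-planar, so it is false that only the maximal ends are non-planar and (ii) fails; orientability again excludes (iii). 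The non-orientable case is symmetric, with ``non-orientable end'' replacing ``non-planar end'', excluding (iii), while non-orientability excludes (ii). Thus in every case (ii) and (iii) are vacuous, and spaciousness is equivalent to the failure of condition (i).

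The crux is therefore to translate condition (i) into the ordinal data. The key simplification is that \( \Ends(M) \) is countable, so every \( \Homeo(M) \)-orbit of ends is automatically countable; condition (i) then reduces to the purely order-theoretic question of whether a maximally stable end admits an immediate predecessor at all. Matching the Mann--Rafi preorder to Cantor--Bendixson rank in this scattered space---where \( e \preceq e' \) corresponds to \( \mathrm{rank}(e) \le \mathrm{rank}(e') \) and the two maximal ends are the two points of rank \( \alpha \)---I would argue as follows. If \( \alpha = \gamma + 1 \) is a successor, the countably many ends of rank \( \gamma \) accumulating onto a maximal end are immediate predecessors, so (i) holds and \( M \) is not spacious. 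If \( \alpha \) is a limit, then the ranks below \( \alpha \) are cofinal, so any end \( e \) of rank \( \gamma < \alpha \) is strictly dominated by some end of rank strictly between \( \gamma \) and \( \alpha \); hence no immediate predecessor exists, (i) fails, and \( M \) is spacious.

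Consequently, for \( n = 2 \), \( M \) is spacious exactly when \( \alpha \) is a limit ordinal, which is the second alternative in the statement. I expect this last translation---verifying that the ends of ``rank exactly one less'' are precisely the immediate predecessors of the maximal ends, and that none survive at a limit rank---to be the only genuine (and still routine) obstacle; every remaining step is bookkeeping against the two cited results and the structure of \( \omega^\alpha\cdot n + 1 \).
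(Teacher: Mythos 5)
Your proposal follows the paper's proof almost verbatim in structure: the paper likewise reads the corollary off of Lemma~\ref{lem:self-similar_spacious} (degree one) and Theorem~\ref{thm:characterize} (degree two), observes that conditions (ii) and (iii) of that theorem fail automatically for the three classes of manifolds in the hypothesis, and translates condition (i) into a successor/limit dichotomy on the rank. Your degree \( \geq 3 \) bookkeeping, your matching of the Mann--Rafi preorder with Cantor--Bendixson rank (using that \( e \preceq e' \) corresponds to comparing ranks in these surfaces), and your observation that condition (i) holds precisely when the exponent \( \alpha \) in \( \omega^\alpha\cdot 2 + 1 \) is a successor---since every orbit in a countable end space is countable---are exactly the assertions the paper makes without proof in the paragraph preceding the statement, so in substance this is the same argument with more detail filled in.

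Two caveats, both of which you share with the paper rather than introduce. First, an off-by-one in conventions: you (following the paper's setup sentence) declare the exponent \( \alpha \) to be the Cantor--Bendixson rank, prove that spaciousness is equivalent to ``\( \alpha \) is a limit,'' and then announce that this is alternative (2) of the statement; but under the rank-equals-exponent convention, alternative (2) literally reads ``\( \alpha = \lambda + 1 \) for a limit ordinal \( \lambda \),'' which is a different condition. The statement (and the paper's own sentence that condition (i) holds if and only if the rank has the form \( \alpha+1 \) with \( \alpha \) a \emph{successor}) is consistent with your computation only under the convention that the rank of \( \omega^\alpha\cdot n + 1 \) is \( \alpha + 1 \), the least ordinal whose iterated derived set is empty; you should reconcile this explicitly rather than assert the match. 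Second, your claim that condition (ii) fails whenever \( M \) is orientable with no planar ends tacitly uses the existence of a non-maximal end: for a two-ended such surface (the ladder surface), all ends are maximal and non-planar, so condition (ii) \emph{holds} and \( M \) is not spacious, even though the end-space criterion cannot distinguish it from the spacious open annulus. The paper's proof elides the same boundary case; it is harmless once the end space is infinite (degree two then forces non-maximal ends to exist), but your write-up, like the paper's, should either restrict to infinite end spaces or treat the two-ended case separately.
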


\subsection{A tameness condition is necessary}
\label{sec:example}

We now give an example of a 2-manifold \(X\) demonstrating that some kind of tameness assumption on \(M\) is necessary as a hypothesis in Theorem~\ref{thm:characterize}. 
More specifically, the 2-manifold \( X \) constructed below is weakly self-similar with exactly two maximally stable ends,  is not  upwardly stable, is not spacious, and does not meet any of the conditions in Theorem~\ref{thm:characterize}.

We begin by describing a 2-manifold constructed by Mann--Rafi, which we denote \(W\); it is the main building block for our example 2-manifold \(X\). 
The maximal ends of \( W \) are all of the same type, and they form a perfect set;  nevertheless, \( W \) is not self-similar \cite[Section~3]{MannRafiMonster}. 
We describe here the features of \(W\) that are salient to our purposes, and we refer the reader to the paper of Mann--Rafi for full details. Within the end space of \(W\) there exists a countably infinite collection of incomparable types of ends, which we enumerate as \( \{ z_j\}_{j\in\bn} \). The construction of \(W\) begins with the 2-sphere minus a Cantor set, realized as a union pairs of pants indexed by finite binary strings. To each pair of pants we connect sum a certain 2-manifold \(T_{i+1}'\), where \(i\) is the length of the binary string indexing the pair of pants. The important thing about each \(T_{i}'\) is that its end space contains exactly one end of type \(z_j\) for \(j \leq i\) and exactly \(2^{(2^{j-i})}\) ends of type \(z_j\) for \(j > i\).

\begin{figure}
 \labellist
 \small\hair 2pt

  \pinlabel {\(d_0\)} [ ] at 30 30
    \pinlabel {\(d_1\)} [ ] at 275 30
      \pinlabel {\(d_2\)} [ ] at 520 30
        \pinlabel {\(d_3\)} [ ] at 765 30

  \pinlabel {\(A_0\)} [ ] at 170 35
    \pinlabel {\(A_1\)} [ ] at 418 35
      \pinlabel {\(A_2\)} [ ] at 668 35

 \endlabellist  
\includegraphics[width=\textwidth]{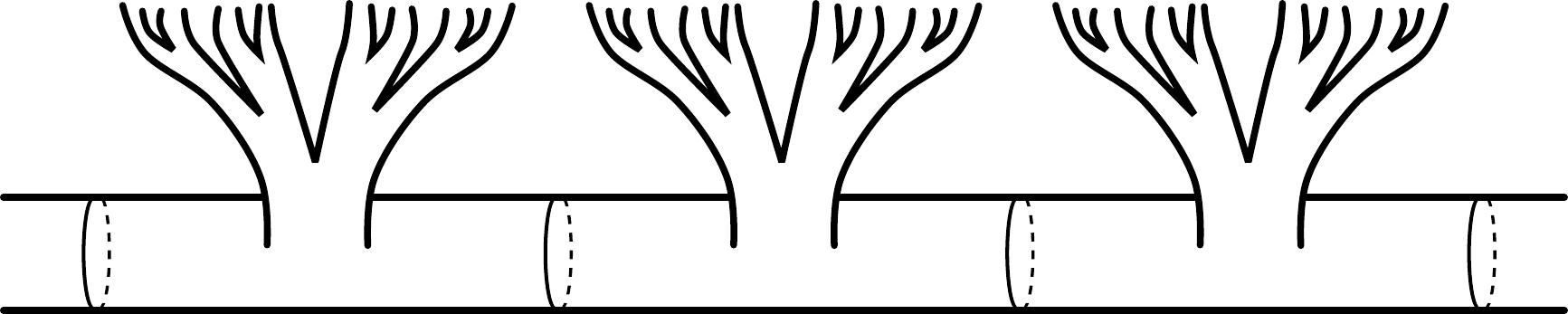}
\caption{The 2-manifold \(X\).}
\label{fig:monsterX}
\end{figure}

Let \(X\) be a 2-manifold built as follows; see Figure~\ref{fig:monsterX}. Take closed annuli \(\{A_i\}_{i \in \Z}\) where each \(A_i\) has boundary components \(\partial_i^+\) and \(\partial_i^-\). Form an open annulus by identifying \(\partial_i^+\) with \(\partial_{i+1}^-\) for all \(i \in \Z\) and call each resulting curve \(d_i\). For each \(i\), connect sum to \(A_i\) a 2-manifold homeomorphic to \(W\); the resulting 2-manifold is \(X\). By abuse of notation, let \(A_i\) now denote the subsurface bounded by \( d_{i-1} \) and \( d_i \) in \( X \). Call \(\Sigma_{i,j}\) the subsurface of \(X\) bounded by \(d_i\) and \(d_j\). Observe that if \(i,j,k,\ell \in \Z\) with \(j-i=\ell-k\), then there exists a homeomorphism of \(X\) carrying \(\Sigma_{i,j}\) to \(\Sigma_{k,\ell}\), and hence \(X\) is weakly self-similar.
Also observe that there is a unique pair of ends \( \mu_- \) and \( \mu_+ \) of \( X \) that are separated by each of the \( d_i \); moreover, we readily see that they are maximally stable ends of \( X \), and hence maximal ends of \(X\). 

We claim these are the only two maximal ends of \( X \). 
Indeed, suppose \( e \) is another such maximal end. 
Then \( e \) is an end of one of the \( A_i \), and hence would be maximal in the end space of \( W \).
However, no maximal end of \( W \) is stable by \cite[Theorem~1.2]{MannRafiMonster}, and hence \( e \) is not a stable end of \( X \). 
We know that \( e \preceq \mu_+ \) and hence, as \( e \) is maximal, \( e \) and \( \mu_+ \) must be of the same type. 
Using the stability of \( \mu_+ \), there exists a homeomorphism \( f \co X \to X \) such that \( f(\mu_+) = e \) (see \cite[Lemma~4.2]{VlamisHomeomorphism}), implying that \( e \) is stable, a contradiction.

By construction we have that \(X\) is not upwardly stable: we have already noted that the maximal ends of the \( A_i \) are not stable, but they are immediate predecessors of the maximally stable ends of \( X \), and hence \( X \) is not upwardly stable.
Our proof that \( X \) is not spacious follows the proof given by Mann--Rafi showing that \( W \) is not self-similar.

\begin{Prop}
\label{prop:monster}
The 2-manifold \(X\) is not spacious.
\end{Prop}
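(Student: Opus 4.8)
The plan is to contradict spaciousness through Lemma~\ref{lem:tube-disjoint}: if \( X \) were spacious, then every tube would admit a \( \Gamma(X) \)-translation, so it suffices to exhibit one tube that admits none. I would take the tube \( T = A_0 \) cobounded by \( d_{-1} \) and \( d_0 \), and suppose for contradiction that some \( g \in \Gamma(X) \) is a \( T \)-translation. Since \( g \) is supported in a tube and the standard tubes \( \Sigma_{p,q} \) are cofinal among all tubes, I can choose \( \Sigma_{p,q} \) containing both \( \operatorname{supp}(g) \) and \( A_0 \). Then \( g \) is the identity outside \( \Sigma_{p,q} \), so every translate \( g^n(A_0) \) lies in \( \Sigma_{p,q} \), and by the definition of a \( T \)-translation the subsurfaces \( \{g^n(A_0)\}_{n\in\bz} \) are pairwise disjoint.

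Passing to end spaces, this produces infinitely many pairwise-disjoint clopen subsets \( \widehat{g^n(A_0)} \) of \( \widehat{\Sigma_{p,q}} \), each homeomorphic to \( \Ends(W) \) (as \( A_0 \) is an annulus connect-summed with \( W \)), while \( \widehat{\Sigma_{p,q}} \) is a disjoint union of only \( q-p \) copies of \( \Ends(W) \). The crux is therefore a rigidity statement for the Mann--Rafi end space: one cannot embed infinitely many pairwise-disjoint clopen copies of \( \Ends(W) \) into a finite disjoint union of copies of \( \Ends(W) \); equivalently, the number of copies of \( W \) is a homeomorphism invariant of such finite disjoint unions, so that \( \Ends(W) \) does not absorb copies of itself. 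This is the end-space manifestation of the non-self-similarity of \( W \), and it is exactly here that I would transport Mann--Rafi's argument.

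To establish the rigidity I would exploit the incomparable types \( z_j \) together with their multiplicities. The point is that the building block \( T_i' \) is detected intrinsically by the growth rate of its type-\( z_j \) multiplicities: since \( T_i' \) carries \( 2^{(2^{j-i})} \) ends of type \( z_j \) for \( j>i \), the double-exponential rate in \( j-i \) pins down the level \( i \), so a homeomorphism must preserve the level of each block. Reading off, for each fixed level \( k \), the number of level-\( k \) blocks—which is \( 2^{k-1} \) per copy of \( W \)—gives a finite, additive, monotone, homeomorphism-invariant quantity \( L_k \) whose value on \( m \) disjoint copies of \( \Ends(W) \) is \( m\cdot 2^{k-1} \). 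With \( L_k \) in hand, if \( \widehat{g^0(A_0)},\dots,\widehat{g^{q-p}(A_0)} \) were pairwise disjoint inside \( \widehat{\Sigma_{p,q}} \), then by additivity and monotonicity \( (q-p+1)\,2^{k-1} \le L_k(\widehat{\Sigma_{p,q}}) = (q-p)\,2^{k-1} \), a contradiction. Hence no \( \Gamma(X) \)-translation of \( A_0 \) exists, and \( X \) is not spacious.

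The main obstacle is the rigidity statement itself. Every clopen neighborhood of a maximal end of \( W \) already contains infinitely many ends of each type \( z_j \), so the naive total counts are all infinite and cannot see the copy number; the entire content of the multiplicities \( 2^{(2^{j-i})} \) is that they upgrade these infinite counts to a finite, invariant, per-level count. Making this precise—recognizing the level of a block purely from the topology of the end space and the type data, independently of any pair-of-pants decomposition, and checking that \( L_k \) is genuinely well defined and invariant—is the delicate step, and it is precisely the adaptation of Mann--Rafi's proof that \( W \) is not self-similar to the present setting.
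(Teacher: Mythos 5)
Your reduction is sound and in fact parallels the paper's: the paper supposes some \( g \in \Ht(X) \) carries \( d_j \) to \( d_k \) and extracts pairwise-disjoint forward iterates of \( \Sigma_{j,k} \) inside a tube, which is the same mechanism as your \( T \)-translation of \( A_0 \) via Lemma~\ref{lem:tube-disjoint}; that part is fine. The genuine gap is that your entire argument rests on the invariant \( L_k \) (``number of level-\( k \) blocks''), which you never define intrinsically and whose well-definedness, homeomorphism-invariance, additivity, and monotonicity you explicitly defer to ``the delicate step.'' That step is not a technical footnote --- it is the whole content of the proposition. In particular, the assertion that ``a homeomorphism must preserve the level of each block'' presupposes that a block is a topologically recognizable subset of the end space; but a clopen set homeomorphic to \( \widehat{T_k'} \) could a priori be assembled from ends scattered across many blocks (as you yourself observe, every neighborhood of a maximal end of \( W \) contains infinitely many ends of every type \( z_j \)), so an exact per-level block count being invariant is far from clear, and no argument is sketched for it. There is also a secondary problem with your setup: your disjoint clopen sets are copies of \( \Ends(W) \), which contain the Cantor set of maximal ends of the \( W \) summands; such sets do not avoid neighborhoods of the maximal ends, so no truncation-to-finitely-many-levels argument applies to them directly.

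The paper avoids both difficulties by proving something much weaker than your exact-count rigidity. It passes immediately to disjoint closed copies of \( E_1 = \widehat{T_1'} \) (one inside each disjoint iterate); these contain no maximal ends, so any \( 2m \) of them miss a neighborhood of the maximal ends of \( \Sigma \) and hence lie in the truncation \( \widehat{\Sigma}_N \) consisting of the first \( N \) levels of the \( m \) copies of \( W \). It then counts ends of the single type \( z_{N+1} \) in \( \widehat{\Sigma}_N \): each copy of \( E_1 \) contains exactly \( 2^{(2^N)} \) of them, while a geometric-series estimate using the double-exponential drop-off across levels shows \( \widehat{\Sigma}_N \) contains strictly fewer than \( 2m \cdot 2^{(2^N)} \) in total. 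This caps the number of disjoint copies of \( E_1 \) at \( 2m-1 \), contradicting the infinitely many produced by the iterates --- with no claim that any homeomorphism preserves levels or blocks, only a cardinality count of ends of one type in one clopen set. To repair your write-up, first shrink each translate \( \widehat{g^n(A_0)} \) to a clopen copy of \( \widehat{T_1'} \) inside it, and then replace \( L_k \) by this one-type counting bound; as it stands, the rigidity statement carrying your proof is asserted rather than proved.
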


\begin{proof}
We will show that each \(d_i\) is in a different orbit under \(\Ht(X)\), which implies that \( X \) is not spacious. 
Let \( j \) and \( k \) be distinct integers. 
Suppose there exists \(g \in \Ht\) such that \(g(d_j)=d_k\). Let \(T\) be the tubular support of \(g\).
Without loss of generality, we may assume that the boundary curves of \(T\) are \( d_i \) and \( d_\ell \), labelled such that  \(i<j<k<\ell\).
Set \(\ell-k=m\) and set \(\Sigma_{k,\ell}=\Sigma\).
Then, as the forward iterates of \( \Sigma_{j,k} \) under \( g \) are pairwise disjoint and contained in \( \Sigma \), we can choose \( 2m \) disjoint subsurfaces of \( \Sigma \) each with an end space homeomorphic to \( \widehat T_1' \), which we denote by \( E_1 \).
Since \(E_1\) is closed and so is the set of maximal ends of \(\Sigma\), the \(2m\) copies of \(E_1\) are disjoint from some open neighborhood of the set of maximal ends of \(\Sigma\). In particular, there exists \(N\in\bn\) such that the \(2m\) copies of \(E_1\) lie within the union of the end spaces of the first \(N\) levels of the \(m\) copies of \(W\) in \(\Sigma\), where we may assume without loss of generality that \(N\geq 4\). Let this union of end spaces be called \(\widehat{\Sigma}_N\). We will arrive at a contradiction by counting in two different ways the number of ends of type \(z_{N+1}\) in \(\widehat{\Sigma}_N\). First, each copy of \(E_1\) contains exactly   \(2^{(2^N)}\) ends of  type \(z_{N+1}\), so \(\widehat{\Sigma}_N\) contains at least \(2m \cdot 2^{(2^N)}\) ends of type \(z_{N+1}\). On the other hand, counting by levels from 0 to \(N\) in each of the \(m\) copies of \(W\) in \(\Sigma\), we have that the count of ends of type \(z_{N+1}\) in \(\widehat{\Sigma}_N\) is exactly

\begin{align*}
m(2^{(2^N)}+2 \cdot 2^{(2^{N-1})} + \dots + 2^{N} \cdot 2^{(2^{1})})
&=
m\cdot 2^{(2^N)}+m(2^{(2^{N-1}+1)}+\dots+ 2^{(2+N)})\\
&<
m\cdot 2^{(2^{N})}+m\sum_{t=0}^{2^{N-1}+1} 2^t\\
&<
m\cdot 2^{(2^{N})}+m \cdot 2^{(2^{N-1}+2)}\\
&<2m \cdot 2^{(2^N)}
\end{align*}

The first inequality uses the fact that the function \( 2^{2^{N-x}+x} \) is strictly decreasing for increasing positive integral values of \( x < N \); and so these summands are distinct and form a proper subset of the integral powers of \(2\) up to \(2^{(2^{N-1}+1)}\). The second inequality is by the sum formula for a geometric series. The last inequality follows from the fact that \(2^N > 2^{N-1}+2\), since \(N \geq 4\). Since these two different counts for the number of ends of type \(z_{N+1}\) in \(\widehat{\Sigma}_N\) yield a contradiction, we conclude that \(X\) is not spacious.
\end{proof}

So far we have shown that \(X\) is a weakly self-similar with exactly two maximally stable ends, that \( X \) is not upwardly stable, and that \( X \) is not spacious.
Now, it is straightforward to check that \(X\) does not meet any of the three conditions listed in Theorem~\ref{thm:characterize}. The example \(X\) does not meet condition (i), since the only immediate predecessor of the \(\mu_i\) are the maximal ends in the \(A_i\) subsurfaces, and these have uncountable orbit. Next, \(X\) does not meet condition (ii), since by the choice of construction of the \(z_j\), Mann--Rafi arrange either that the whole construction (and therefore \(X\)) is planar, or that the \(z_j\) (which are not maximally stable ends) are non-planar. Finally, \(X\) does not meet condition (iii), since it is orientable.

\bibliographystyle{amsplain}
\bibliography{Bib-jep}

\end{document}